\newtheorem{Theorem}{Theorem}[section]
\newtheorem{Corollary}[Theorem]{Corollary}
\newtheorem{Lemma}[Theorem]{Lemma}
\newtheorem{Proposition}[Theorem]{Proposition}
 { \theoremstyle{definition}
	\newtheorem{Definition}[Theorem]{Definition}
 \newtheorem{Observation}[Theorem]{Observation}
 \newtheorem{Example}[Theorem]{Example}
	\newtheorem{Remark}[Theorem]{Remark} }
\numberwithin{equation}{section}
\newcommand{\superimpose}[2]{\mathrel{\ooalign{${#1}$\cr\hidewidth\raise.235ex\hbox{${#2}\mkern-1.1mu$}\cr}}}
\newcommand{\stset}[1]{\mathbb{#1}}
\newcommand{\stcat}[1]{\mathcal{#1}}
\newcommand{\stsimple}[1]{\mathbf{#1}}
\newcommand{\field}{\stset{K}}
\newcommand{\id}{\mathrm{id}}
\newcommand{\si}[1]{\stsimple{#1}}
\newcommand{\one}{\mathds{1}}
\newcommand{\ewidI}{\mathrm{I}}
\newcommand{\ewidIcirc}{\superimpose{\ewidI}{\circ}}
\newcommand{\coendbal}{\beta}
\newcommand{\halfbraid}[1]{\mathrm{br}^{#1}}
\newcommand{\tenhomid}{\star}
\newcommand{\sigsurf}{\Sigma}
\newcommand{\evaluate}[1]{\lvert {#1} \rvert}
\newcommand{\totaleval}[3]{( {#1}, \lvert {#2} \rvert {#3})}
\newcommand{\alteval}[1]{\lvert\lvert {#1} \rvert\rvert}
\newcommand{\gencomp}[1]{\circ_{#1}}
\newcommand{\mitosis}[1]{\bullet_{#1}}
\newcommand{\bal}{\gamma}
\newcommand{\eqdesc}[1]{\stackrel{\mathclap{#1}}{=}}
\newcommand{\eqwithref}[1]{\hspace{1mm}\eqdesc{\text{\eqref{#1}}}\hspace{1mm}}
\newcommand{\holidem}{h}
\newcommand{\holsurj}{\pi}
\newcommand{\opcat}[1]{\overline{#1}}
\newcommand{\revcat}[1]{{#1}^{\mathrm{rev}}}
\newcommand{\bimod}[3]{{}_{#1}{#2}_{#3}}
\newcommand{\lmod}[2]{{}_{#1}{#2}}
\newcommand{\rmod}[2]{{#1}_{#2}}
\newcommand{\acat}{\stcat{A}}
\newcommand{\bcat}{\stcat{B}}
\newcommand{\ccat}{\stcat{C}}
\newcommand{\dcat}{\stcat{D}}
\newcommand{\mcat}{\stcat{M}}
\newcommand{\ncat}{\stcat{N}}
\newcommand{\kcat}{\stcat{K}}
\newcommand{\lcat}{\stcat{L}}
\newcommand{\xcat}{\stcat{X}}
\newcommand{\ycat}{\stcat{Y}}
\newcommand{\modulecent}[2]{Z_{#1}(#2)}
\newcommand{\bimodfun}[4]{\mathrm{Fun}_{{#1}|{#2}}({#3}, {#4})}
\newcommand{\lmodfun}[3]{\mathrm{Fun}_{{#1}}({#2}, {#3})}
\newcommand{\funcat}[2]{\mathrm{Fun}({#1}, {#2})}
\newcommand{\balfun}[2]{\mathrm{Fun}^\bal({#1}, {#2})}
\newcommand{\leftind}{\rexreldel}
\newcommand{\rightind}{\lexreldel}
\newcommand{\eq}{\mathrm{eq}}
\newcommand{\ew}{\mathrm{EW}}
\newcommand{\mew}{\mathrm{EW}_\mathrm{m}}
\newcommand{\ewid}[1]{\ewidI_{#1}}
\newcommand{\ewcoid}[1]{\ewid{#1}}
\newcommand{\zewid}[1]{\ewidIcirc_{#1}}
\newcommand{\zewcoid}[1]{\zewid{#1}}
\newcommand{\raycat}{\mathbb{T}^\mathrm{R}}
\newcommand{\nodecat}{\mathbb{T}^\mathrm{N}}
\newcommand{\nodespace}{\mathrm{N}}
\newcommand{\preblock}{\mathrm{T}^\mathrm{p}}
\newcommand{\blockspace}{\mathrm{T}}
\newcommand{\moveOR}{{\bf OR}}
\newcommand{\moveC}{{\bf C}}
\newcommand{\moveEF}{{\bf EF}}
\newcommand{\moveL}{{\bf L}}
\newcommand{\moveDE}{{\bf TE}}
\newcommand{\moveDV}{{\bf SN}}
\newcommand{\movemap}{\psi}
\newcommand{\Pmovemap}{\Psi^\mathrm{p}}
\newcommand{\Movemap}{\Psi}
\newcommand{\postmove}[1]{{{#1}'}}
\DeclareMathOperator*{\btimes}{\boxtimes}
\DeclareMathOperator*{\stimes}{\square}
\newcommand{\deligne}{\btimes}
\newcommand{\bigdeligne}{\mathlarger{\deligne}}
\newcommand{\reldel}{\square}
\newcommand{\reldelov}[1]{\stimes\limits_{#1}}
\newcommand{\rexreldel}{\reldel}
\newcommand{\lexreldel}{\reldel}
\newcommand{\indadjl}[2]{{\mathrm{adj}_{{#1},{#2}}^{\mathrm{l}}}}
\newcommand{\indadjr}[2]{{\mathrm{adj}_{{#1},{#2}}^{\mathrm{r}}}}
\newcommand{\homset}[2]{\langle #1 , #2 \rangle}
\newcommand{\homsetin}[3]{{_{#1}} \langle #2 , #3 \rangle}
\newcommand{\basisel}[2]{\tenhomid_{\homset{#1}{#2}}}
\newcommand{\soc}{\mathrm{sil}}
\newcommand{\ct}[2]{{#1}_{#2}}
\newcommand{\et}[2]{{#1}_{#2}} 
\newcommand{\tc}[2]{{}_{#2}{#1}}
\newcommand{\te}[2]{{}_{#2}{#1}} 
\newcommand{\ctc}[3]{{}_{#3}{#1}_{#2}}
\newcommand{\cte}[3]{{}_{#3}{#1}_{#2}}
\newcommand{\ete}[3]{{}_{#3}{#1}_{#2}} 
\newcommand{\coincl}[1]{\ct{\mathrm{incl}}{#1}}
\newcommand{\projend}[1]{\te{\mathrm{pr}}{#1}}
\newcommand{\sweed}[2]{{#1}_{(#2)}}
\newcommand{\trace}[1]{\mathrm{Tr}_{#1}}
\newcommand{\catdim}[1]{\mathcal{D}_{#1}}
\newcommand{\pdim}[1]{\mathrm{d}_{#1}}
\newcommand{\ev}[1]{\ct{\mathrm{ev}}{#1}}
\newcommand{\coev}[1]{\te{\mathrm{coev}}{#1}}
\newcommand{\brev}[2]{\ct{\mathrm{brev}_{#1}}{#2}}
\newcommand{\cobrev}[2]{\te{\mathrm{cobrev}_{#1}}{#2}}
\newcommand{\baleqat}[3]{\mathrm{eq}{#1}({{#2},{#3}})}
\newcommand{\class}[2]{\left\{ #1  \middle|  #2 \right\}}
\newcommand{\Left}[2]{\left#1 \vphantom{\del[#2]{X^X}}\right. }
\newcommand{\Right}[2]{ \left. \vphantom{\del[#2]{X^X}}\right#1}
\renewcommand{\cal}[1]{\mathcal{#1}}
\newcommand{\rest}{\mathrm{R}}
\newcommand{\ihom}[3]{\relihom{#1}{ {#2} , {#3} }}
\newcommand{\relihom}[2]{{}_{#1}[ #2 ]}
\newcommand{\vect}{\mathrm{vect}}
\newcommand{\pica}[2]{\raisebox{-0.5\height}{\includegraphics[scale=#1]{#2}}}
\renewcommand{\Left}[2]{\left#1 \vphantom{\del[#2]{X^X}}\right. }
\renewcommand{\Right}[2]{ \left. \vphantom{\del[#2]{X^X}}\right#1}
\newenvironment{mequation*}{\begin{align*}\begin{aligned}}{\end{aligned}\end{align*}}
\begin{document}
	
\allowdisplaybreaks

\newcommand{\arXivNumber}{2312.01946}

\renewcommand{\PaperNumber}{102}

\FirstPageHeading

\ShortArticleName{The Evaluation of Graphs on Surfaces for State-Sum Models with Defects}

\ArticleName{The Evaluation of Graphs on Surfaces\\ for State-Sum Models with Defects}

\Author{Julian FARNSTEINER and Christoph SCHWEIGERT}

\AuthorNameForHeading{J.~Farnsteiner and C.~Schweigert}

\Address{Fachbereich Mathematik, Universit\"at Hamburg, Bereich Algebra und Zahlentheorie,\\ Bundesstra\ss{}e 55, 20146 Hamburg, Germany}
\Email{\href{mailto:julfarn@gmx.de}{julfarn@gmx.de}, \href{mailto:christoph.schweigert@uni-hamburg.de}{christoph.schweigert@uni-hamburg.de}}
\URLaddress{\url{https://www.math.uni-hamburg.de/home/schweigert/}}

\ArticleDates{Received December 10, 2023, in final form November 06, 2024; Published online November 18, 2024}
	
\Abstract{The evaluation of graphs on 2-spheres is a central ingredient of the Turaev--Viro construction of three-dimensional topological field theories. In this article, we introduce a~class of graphs, called extruded graphs, that is relevant for the Turaev--Viro construction with general defect configurations involving defects of various dimensions. We define the evaluation of extruded graphs and show that it is invariant under a set of moves. This ensures the computability and uniqueness of our evaluation.}

\Keywords{state-sum; vertex evaluation; topological quantum field theory; $6j$ symbols; defects}

\Classification{18M30; 18M20; 81T45}

\section{Introduction} \label{sec_intro}
Topological field theories have mediated many relations between topology and representation theory: they also have applications in
the theory of topological phases of matter, in quantum computing and for the
construction of correlators of 2-dimensional conformal field theories.
As a~consequence, topological field theories, in particular in three dimensions,
have been intensely
studied in the last few decades. Various constructions of topological field
theories have been developed; the state-sum construction combines the virtues
of providing fully local theories, together explicit tools for concrete computations.

It is by now well accepted that topological field theories should be studied
on stratified manifolds where strata of various codimensions can be endowed with representation-theoretic
decorations. Such strata, also called defects,
are central for applications to representation theory; they also
provide insight in
symmetries and dualities of the topological field theories; for an early
study in the case of 3-dimensional Dijkgraaf--Witten theories see, for example,~\cite{fpsv}.

In the context of state-sum constructions, defects have recently been studied
intensely; a~selection of papers includes \cite{bms-tricatdiagrams,cms-tricat-tfts,crs-orbifolds,dkr-defect-tfts,ks-surfaceops,kmrs-defect-rt,meusburger-statesumdefects}. The
stratifications discussed in these papers are, however, of limited generality.
We describe the more general case which has also been treated recently in \cite{cm-orb-tricats} (see also \cite{carqueville-orbifolds-tft, czr-top-def}):
consider, for simplicity, a closed oriented 3-manifold, together with a skeleton in the sense
of \cite{tv}. We label the 3-cells of the skeleton by spherical fusion categories
and the 2-cells by appropriate bimodule categories with trace. Figure~\ref{pic_intro_1} shows two 3-cells
labeled by $\cal A$ and $\cal B$ as well as 2-cells labeled by $\cal K$, $\cal M$ and
$\cal N$.
\begin{figure}[t]
\centering
\includegraphics[scale=0.9]{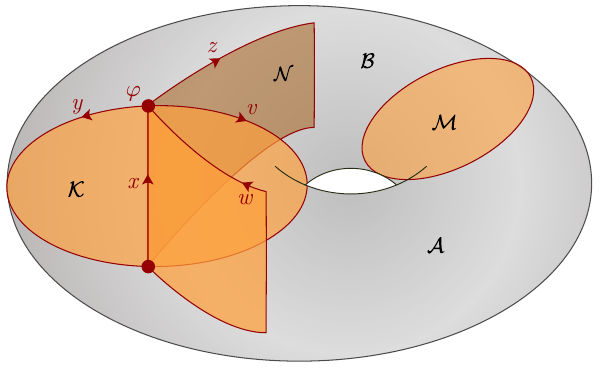}
\caption{}\label{pic_intro_1}
\end{figure}

In the simplest situation, which corresponds to the theory described in, e.g., \cite{tv},
all 3-cells are labeled by the same spherical fusion category $\cal A$ and all
2-cells by the regular $\acat$-$\acat$-bimodule category, which is the category $\cal A$ with left and right action
given by the tensor product in $\cal A$.
We refer to this situation as the {\em monochromatic} labeling.
In this way, we recover the classical Turaev--Viro construction.

At a 1-cell (or edge) of a skeleton, finitely many 2-cells labeled by possibly different bimodule categories meet.
For example, at the 1-cell in Figure~\ref{pic_intro_1}
that is labeled by $x$, four 2-cells meet.
The orientation of the 1-cells, together with the orientation of the ambient 3-manifold, provides a cyclic
order for the adjacent 2-cells.
The labeling is such that we can assign the cyclically balanced Deligne
product of the bimodule categories to the 1-cell.
We decorate each 1-cell with an object in this balanced Deligne product.
These objects are denoted by $v$, $w$, $x$, $y$, $z$ in Figure~\ref{pic_intro_1}. We call such a labeled 1-cell a
generalized Wilson line.

In the monochromatic case, the balanced Deligne product is equivalent to
the Drinfeld center of $\cal A$. Indeed, ordinary bulk Wilson lines are labeled
by objects in the Drinfeld center.
Since the Drinfeld center is monoidal, there is a distinguished {\em transparent} label for a bulk Wilson line: the monoidal unit of the Drinfeld center.
(It should be appreciated that the treatment of bulk Wilson lines in \cite{tv}
is slightly different: they enter as additional geometric data and are not realized as 1-cells in the skeleton.)

Finally, we consider vertices where several 1-cells meet. An example
in Figure~\ref{pic_intro_1} is the vertex labeled by $\varphi$ in which 1-cells labeled
by $v$, $w$, $x$, $y$ and $z$ meet.
In the monochromatic case of ordinary Wilson lines labeled by
objects $x_1,\dots, x_n$ in a Drinfeld center $\mathcal{Z}(\mathcal{A})$,
the label for the vertex is an
invariant tensor $\mathrm{Hom}_{\mathcal{Z}(\mathcal{A})}(1,x_1\otimes \dots \otimes x_n)$.
In fact, such a labeling requires auxiliary choices like a linear ordering of the objects
$x_i$. This can be avoided by considering a small sphere around the vertex;
this sphere has labeled marked points where
the bulk Wilson lines pierce the sphere. In this situation, we have the corresponding
block space for $\mathcal{Z}(\mathcal{A})$ on the sphere at our disposal, which does not depend on auxiliary
choices. We therefore assign an element in the block space to the vertex. To apply a similar strategy beyond the monochromatic case,
we use that block spaces, even in the presence of defects, have been
constructed in \cite{fss-statesum}.
While the block spaces in \cite{fss-statesum} have been constructed for framed surfaces, we here use
a variant for oriented surfaces. As explained in Remark~\ref{rem_fss_not_needed}, this is not a problem in the
situations we consider.

We call a 3-manifold with embedded labeled skeleton a {\em labeled defect $3$-manifold};
a topological field theory
has to assign a scalar to such a manifold.

The state-sum construction of such an invariant then proceeds as follows: one chooses as state-sum variables
objects in the bimodule categories that are assigned to the 2-cells of the
skeleton. (At a later stage in the state-sum construction, there will be a summation
over these variables.) For a fixed choice of state-sum variables,
one constructs a vector space $V_{(e,\iota)}$ for each half-edge~${(e,\iota)}$, i.e., for each pair consisting of a 1-cell $e$
and a homotopy class $\iota$ of embeddings~${[0,1) \to e}$ that send 0 to an end point of $e$.
The reader is invited to think about this
space as a~space of invariants. This, however, is only literally true in the transparent
case; we construct the relevant vector spaces in Section~\ref{sec_fssrecap}.
Each edge gives rise to a pair of half-edges. The trace structure of the bimodule
categories incident to the edge gives rise to a non-degenerate bilinear
pairing on the two spaces of invariants.

To the labeled defect manifold, together with a choice of skeleton and a
choice of state-sum variables, we associate the tensor product $V:=\bigotimes_{(e,\iota)} V_{(e,\iota)}$
of these vector spaces over all half-edges.
There is a non-degenerate bilinear pairing
for the two vector spaces associated to the two half-edges $(e,\iota_+)$, $(e,\iota_-)$ of an edge $e$.
Hence the vector space $V_{(e,\iota_+)}\otimes V_{(e,\iota_-)}$
and also $V$ as a tensor product of such spaces comes with a distinguished non-zero vector $\star$.

The vector space $V$ can also be organized as a tensor product over vertices,
\[
 V=\bigotimes_{v} \bigotimes_{(e,\iota) \text{ incident to } v} V_{(e,\iota)}   .
\]
Around each vertex, we obtain the
following situation which is depicted in Figure~\ref{pic_intro_2} and which we call an extruded graph.

\begin{figure}[ht]
\centering
\includegraphics[scale=0.9]{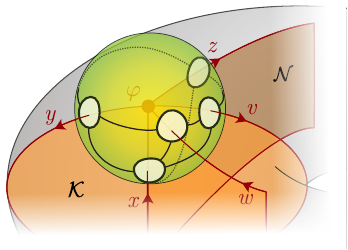}

\caption{}\label{pic_intro_2}
\end{figure}

In the monochromatic situation, without bulk Wilson lines, we obtain a labeled graph
on a small sphere surrounding the vertex. Its edges are the intersection of the
2-cells with the sphere and hence labeled by the state-sum objects. Its vertices
are the intersection of 1-cells with the sphere and hence labeled with
invariant tensors. By the standard graphical calculus of spherical fusion categories
\cite{bw-sphcats}
such a graph can be evaluated to a scalar, giving a linear form~${\evaluate{-} \colon V \to \field}$. Applying all these vertex evaluations to the
distinguished vector $\star\in V$ gives a scalar $\evaluate{\star}$; the weighted sum over these scalars for different choices state-sum variables is expected to be independent of the choice
of skeleton; it is a natural candidate for the Turaev--Viro invariant.

The vertex evaluation is thus a central ingredient of the Turaev--Viro
construction. A Turaev--Viro construction for a general defect 3-manifold
needs such an evaluation for extruded graphs.
The main result of this paper is such an evaluation, along with the proof that
it has all the properties that make it algorithmically computable.
This work does not contain a proof that a~Turaev--Viro theory can be built using our evaluation.
In particular, proving the independence of the invariant for 3-manifolds from the choice of skeleton involves combinatorial geometry for stratified manifolds for which the authors are not experts.
Some more discussion on the assembly into a state-sum model can be found in \cite[Section~5.4]{jf-thesis}.
We expect the resulting theory to be in agreement with the one presented recently in \cite{cm-orb-tricats}.

To define such a vertex evaluation one could
envisage the following (standard) strategy: exhibit a series of moves that reduces the
extruded graph to a standard graph on the sphere. A good candidate for such a standard graph would
be a loop on the sphere with a marked point labeled by an endomorphism. The
standard graph could then
be evaluated by a trace. This strategy has a crucial problem: one has to show that
any combination of moves that simplifies a graph gives a trace with the same numerical value.
This requires a careful analysis of relations between different moves. In our situation, this
is a very difficult problem, since the combinatorics of extruded graphs is involved and, to our knowledge, has not been developed.

Hence, in this paper, we use a rather different strategy: we {\em define} an
evaluation for {\em all} extruded graphs and then
prove theorems stating that a certain set of natural moves does not change the value
of the evaluation. The main idea of our definition is as follows: the block space $\blockspace_v$
in which the label $\varphi$ for a vertex $v$ is chosen comes by definition
\cite{fss-statesum} as a subspace
of a {\em pre-block space} $\preblock_v$. We show that the labels we have chosen
for the state-sum variables and the invariant tensors
combine to an element in the pre-block space $\preblock$. We then use the spherical structure on the fusion categories
to exhibit $\blockspace$ not only as a subspace, but as a retract of $\preblock$. This defines
the evaluation. We then introduce a set of moves and {\em prove} that
the moves leave the evaluation
invariant. The set of moves allows us to reduce the graph so that in the end,
we evaluate a trace. This ensures that our evaluation procedure
can be explicitly implemented in practice.

For the practitioner, the content of this article can thus be reduced
to the following statements: there is a well-defined evaluation procedure for extruded graphs, and using the
moves described in Section~\ref{sec_moves_overview}, the evaluation of any extruded graph can be reduced to the computation
of a~trace on a bimodule category.

This paper is organized as follows:
Section~\ref{sec_cats} fixes conventions and notation; and some algebraic notions, such as bimodule categories with trace and relative Deligne products, are recalled.
Moreover, diagrammatic notation used to manipulate morphisms is introduced in Section~\ref{sec_balfuns}, and in Section~\ref{sec_spleqs}, we define (split) \emph{equalizers of bi-balanced functors}.

Section~\ref{sec_construction} starts with the Definition~\ref{def_extgraph} of extruded graphs.
We continue by describing the block space from Section~\ref{sec_fssrecap}, which is associated to an extruded graph, as a split equalizer of a bi-balanced functor; this uses semisimplicity and the existence of spherical structures on the involved fusion categories.
In Definition~\ref{def_eval}, the evaluation of extruded graphs is introduced as a linear map associated to an extruded graph.
We then show in Theorem~\ref{thm_elementary_graph} that loop graphs are evaluated to traces.

Section~\ref{sec_moves} is dedicated to moves of invariance, see Definition~\ref{def_invariance}.
We give a list of six selected moves in Section~\ref{sec_moves_overview}, among which three are sufficient to make a uniqueness statement: our evaluation is unique in that a) it is left invariant by these moves and b) loop graphs are evaluated to traces; this is proved in Theorem~\ref{thm_uniqueness}.

The two appendices are reserved for more technical proofs, in particular for the proof that the moves are really moves of invariance.

\section{Algebraic preliminaries}
\label{sec_cats}
We need to fix notation, and recall standard concepts, such as bimodule categories, categorical centers and the relative Deligne product.
Lesser-known structures which are reviewed include bimodule traces in Section~\ref{sec_traced_bimods}.

Throughout this paper, fix an algebraically closed field $\field$ of characteristic 0.
We will mainly work with $\field$-linear categories, using standard terminology that can be found in \cite{egno}.
When not stated otherwise, all linear categories are also assumed finite and semisimple.

The following notational conventions serve to shorten the presentation:
\begin{itemize}\itemsep=0pt
	\item Given any category $\ccat$ with objects $x,y \in \ccat$, we denote Hom-sets by angled brackets
	\begin{equation*}
		\homset{x}{y} := \homsetin{\ccat}{x}{y} := \mathrm{Hom}_\ccat (x,y).
	\end{equation*}
	
	\item Lowercase bold letters ($\si{x}, \si{y}, \dots$) are used for simple objects.
	In a finite semisimple category~$\ccat$, the notation
$\sum_{\si{x}\in \ccat} \cdots
$
	stands for a sum over the set of isomorphism classes of simple objects of $\ccat$, with $\si{x}$ assuming the value of a representative for each class in the sum.
	
	\item We denote the opposite category $\opcat{\ccat}$ to any category $\ccat$ by an overline.
	If \smash{$
		x \xlongrightarrow{g} y \xlongrightarrow{f} z$} are objects and morphisms in $\ccat$, then we also denote the corresponding objects and morphism in $\opcat{\ccat}$ by an overline:
	\smash{$
			\opcat{z} \xlongrightarrow{\opcat{f}} \opcat{y} \xlongrightarrow{\opcat{g}} \opcat{x}$}.
	In other words,
	\smash{$
		\opcat{f \circ g} = \opcat{g} \circ \opcat{f}
	$}.
	
	\item We treat the Deligne product $\xcat \deligne \ycat$ of (as always, finite) linear categories $\xcat$ and $\ycat$ as strictly associative and use tacitly that the equivalence $\xcat \deligne \ycat \cong \ycat \deligne\xcat$ is part of a~symmetric structure.
	
	\item A morphism between finite direct sums $f\colon \bigoplus_{i\in I} X_i \to \bigoplus_{j\in J} Y_j$ is determined by the family of morphisms
$\ete{f}{i}{j} := \projend{j} \circ f \circ \coincl{i}\colon X_i \to Y_j$,
	which we call the \emph{matrix elements} or \emph{components} of $f$.
\end{itemize}

\subsection{Calabi--Yau categories}\label{sec_calabiyau}

\begin{Definition}[cf.\ \cite{costello-calabiyau,ms-dbranes-ktheory-tft, schaumann-modtraces}]
	A linear category $\ccat$ equipped with a \emph{trace} $\trace{\ccat}$, that is, a~collection of maps $\trace{\ccat}\colon \homsetin{\ccat}{x}{x} \to \field$ for each object $x\in \ccat$ satisfying
	\begin{itemize}\itemsep=0pt
		\item \emph{Symmetry}: $\trace{\ccat}(f\circ g) = \trace{\ccat}(g\circ f)$ for morphisms $f\colon x \to y$, $g\colon y\to x$ in $\ccat$; and
		\item \emph{Non-degeneracy}: The assignment $f \mapsto \trace{\ccat}(f\circ -)$ defines an isomorphism $\homsetin{\ccat}{x}{y} \cong \homsetin{\ccat}{y}{x}^*$ for each $f \colon x \to y$ in $\ccat$,
	\end{itemize}
	is called a \emph{Calabi--Yau category}.
	For any object $x\in \ccat$, the scalar $\pdim{x}:=\trace{\ccat}(\id_x)$ is called the \emph{dimension} of $x$.
\end{Definition}

\begin{Lemma}[{\cite[Proposition~5.2]{schaumann-modtraces}}]
	The trace of a $($finite semisimple$)$ Calabi--Yau category $\ccat$ is determined by its \emph{dimension vector} $(\pdim{\si{x}})$, that is, the finite list of dimensions $\pdim{\si{x}}$ of $($representatives of$)$ simple objects $\si{x} \in \ccat$. The entries of the dimension vector are non-zero, $\pdim{\si{x}}\neq 0$.
	Conversely, any list of non-zero scalars for each isomorphism class of simple objects defines a trace.
\end{Lemma}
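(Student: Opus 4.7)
The plan is to reduce everything to the case of endomorphisms of simple objects, where the cyclicity axiom forces the trace to be essentially unique, and then to glue back together using semisimplicity.

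First, I would analyze $\trace{\ccat}$ on endomorphisms of a simple $\si{x}\in\ccat$. Since $\field$ is algebraically closed and $\ccat$ is semisimple, Schur's lemma gives $\homset{\si{x}}{\si{x}} = \field\cdot\id_{\si{x}}$, so $\trace{\ccat}$ on this line is determined by its value $\pdim{\si{x}} = \trace{\ccat}(\id_{\si{x}})$. Non-degeneracy, applied to $x = y = \si{x}$, forces the pairing $(\lambda,\mu)\mapsto \lambda\mu\,\pdim{\si{x}}$ on $\field\times\field$ to be non-degenerate, giving $\pdim{\si{x}}\neq 0$.

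Next, for an arbitrary $x\in\ccat$, I would pick a decomposition $x\cong\bigoplus_{\si{y}} V_{\si{y}}\otimes\si{y}$ with multiplicity spaces $V_{\si{y}}$ and inclusion/projection morphisms $\coincl{\alpha}\colon \si{y}\hookrightarrow x$, $\projend{\alpha}\colon x\twoheadrightarrow \si{y}$ indexed by a basis $\alpha$ of $V_{\si{y}}$, satisfying $\projend{\alpha}\circ\coincl{\beta}=\delta_{\alpha\beta}\id_{\si{y}}$. For $f\in\homset{x}{x}$, the matrix elements $\ete{f}{\alpha}{\beta}:=\projend{\beta}\circ f\circ\coincl{\alpha}$ lie in $\homset{\si{y}}{\si{y}'}$, which is $\field\cdot\id_{\si{y}}$ when $\si{y}=\si{y}'$ and $0$ otherwise. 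Writing $f=\sum_{\alpha,\beta}\coincl{\beta}\circ\ete{f}{\alpha}{\beta}\circ\projend{\alpha}$ and applying symmetry repeatedly,
\begin{equation*}
\trace{\ccat}(f)=\sum_{\alpha,\beta}\trace{\ccat}\bigl(\coincl{\beta}\circ\ete{f}{\alpha}{\beta}\circ\projend{\alpha}\bigr)=\sum_{\alpha,\beta}\trace{\ccat}\bigl(\ete{f}{\alpha}{\beta}\circ\projend{\alpha}\circ\coincl{\beta}\bigr)=\sum_{\si{y}}\sum_{\alpha}\ete{f}{\alpha}{\alpha}\cdot\pdim{\si{y}},
\end{equation*}
which shows that $\trace{\ccat}$ on all of $\ccat$ is completely determined by the dimension vector $(\pdim{\si{x}})_{\si{x}}$.

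For the converse, given any assignment $\si{x}\mapsto\pdim{\si{x}}\in\field^\times$, I would define $\trace{\ccat}$ on $\homset{x}{x}$ by the formula above (equivalently, as the weighted sum of standard traces on the matrix algebras $\mathrm{End}(V_{\si{y}})$). Independence of the decomposition is immediate since the right-hand side equals $\sum_{\si{y}}\pdim{\si{y}}\cdot\mathrm{tr}_{V_{\si{y}}}(f_{\si{y}})$ where $f_{\si{y}}$ is the block of $f$ in $\mathrm{End}(V_{\si{y}})$, which is intrinsic. Symmetry follows from the ordinary matrix-trace symmetry on each $\mathrm{End}(V_{\si{y}})$. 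For non-degeneracy, I would observe that the pairing $\homset{x}{y}\times\homset{y}{x}\to\field$ splits, via semisimple decomposition, into a direct sum of pairings of the form $\field\otimes\mathrm{Hom}(V_{\si{y}},W_{\si{y}})\times\field\otimes\mathrm{Hom}(W_{\si{y}},V_{\si{y}})\to\field$ given by $\pdim{\si{y}}$ times the matrix-trace pairing; each factor is non-degenerate because $\pdim{\si{y}}\neq 0$.

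The main obstacle is a bookkeeping one rather than a conceptual one: keeping the matrix-element manipulation in the first part clean enough that the multiplicity-space reorganisation in the converse direction becomes transparent. Once endomorphism algebras of simples are isolated as one-dimensional and symmetry is exploited to kill off-diagonal contributions, both directions are forced.
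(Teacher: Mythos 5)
Your proof is correct: the paper itself offers no proof of this lemma, deferring to \cite[Proposition~5.2]{schaumann-modtraces}, and your argument (Schur's lemma forcing $\pdim{\si{x}}\neq 0$ and pinning down the trace on simples, then the semisimple decomposition with symmetry killing off-diagonal terms, and the blockwise matrix-trace pairing giving well-definedness, symmetry and non-degeneracy in the converse) is exactly the standard argument underlying the cited result. Nothing further is needed.
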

The ``squared norm''
$\catdim{\ccat} := \sum_\si{x} \pdim{\si{x}}^2
$
of the dimension vector is called the \emph{dimension} of the Calabi--Yau category $\ccat$.
If $\ccat$ and $\dcat$ are two Calabi--Yau categories, then the Deligne product~${\ccat \deligne \dcat}$ is a Calabi--Yau category with trace
$\trace{\ccat \deligne\dcat} (f \otimes g) := \trace{\ccat}(f)  \trace{\dcat}(g)
$
for endomorphisms $f$ in~$\ccat$ and $g$ in $\dcat$.
The dimension of the product category is the product of the dimensions of the factors
\begin{equation*}\label{eq_dimension_deligne_product}
	\catdim{\ccat \deligne\dcat} = \sum_{\si{z}\in \ccat\deligne\dcat} \pdim{\si{z}}^2 = \sum_{\substack{\si{x} \in \ccat \\ \si{y} \in \dcat}} \pdim{\si{x} \deligne \si{y}}^2 = \sum_{\substack{\si{x} \in \ccat \\ \si{y} \in \dcat}} \pdim{\si{x}}^2 \pdim{\si{y}}^2 = \catdim{\ccat} \catdim{\dcat}.
\end{equation*}

\subsection{Monoidal categories}
We treat all monoidal categories as strict.
For the monoidal product of two objects $a$ and $b$, we simply write $ab$, with the exception of the tensor product $\otimes$ of vector spaces.
Reversing the product of a monoidal category $\acat$ leads to another monoidal category, denoted $\revcat{\acat}$.

The monoidal categories we consider are pivotal, i.e., they are a rigid and a monoidal isomorphism from the bidual functor to the identity functor has been chosen \cite[Definition~4.7.7]{egno}.
Thanks to the coherence theorem \cite[Theorem~2.2]{ns-basic}, we treat the pivotal structure as strict as well.
We will therefore not distinguish between left and right dualities, and denote by $a^*$ the dual of an object $a$ in a pivotal category.
We occasionally make use of the fact that a pivotal structure on a category $\acat$ gives rise to a distinguished monoidal equivalence
$\opcat{\acat} \cong \revcat{\acat}$,
which is the identity on objects and sends morphisms $\opcat{f}$ to the dual morphism $f^*$.
We denote the evaluation and coevaluation maps by
\begin{equation}\label{eq_ev_coev}
	\ev{a}\colon\ a  a^* \to \one \qquad\text{and} \qquad \coev{a} \colon\ \one \to a^* a.
\end{equation}
Instead of distinguishing between left and right evaluation and coevaluation, we stick to the morphisms \eqref{eq_ev_coev} and use{\samepage
\begin{equation*}
	\ev{a^*}\colon\ a^*  a \to \one \qquad\text{and} \qquad \coev{a^*}\colon\ \one \to a a^*
\end{equation*}
for the other duality.}

Both the left and the right pivotal trace endow a given pivotal fusion category $\acat$ with the structure of a Calabi--Yau category.
If the two traces agree, the category $\acat$ is called \emph{spherical}, and it carries the structure of a Calabi--Yau category in a canonical way.

\subsection{Diagrammatic notations} \label{sec_pictures}
We use the standard diagrammatic notation of string diagrams. These are always read bottom-to-top: for example, given morphisms $f \colon x\otimes y^* \to z$, $g\colon y \to w^*$ in a pivotal category $\acat$, we write
\begin{equation}\label{eq_strdiag_ex}
	(z\otimes g) \circ (f \otimes y) \circ (x\otimes \coev{y}) = \pica{1}{pic_diag_example}.
\end{equation}
For generalities on string diagrams, we refer to \cite{tv}.

In the string diagram \eqref{eq_strdiag_ex}, all strands are labeled by objects and all coupons are labeled by appropriate morphisms.
We need to work with linear maps between Hom-spaces and therefore introduce and extension of the graphical calculus to diagrams in which we leave one coupon empty, drawn with a double outline.
Such a diagram represents a linear map from the Hom-space associated with the empty coupon to the Hom-space in which the fully labeled string diagram evaluates to a morphism.
The following example shows a linear map between Hom-spaces~${\homset{x\otimes y^*}{z} \to \homset{x}{z \otimes w^*}}$:
\begin{equation*}
	\pica{1}{pic_emptybox_ex} = (f \mapsto (z\otimes g) \circ (f \otimes y) \circ (x\otimes \coev{y}) ).
\end{equation*}
Since they describe maps of Hom-spaces, these diagrams with empty coupons are composed by inserting the pre-composed diagram into the empty box of the post-composed diagram.
This is illustrated below, in an example which describes a composition of linear maps
\begin{gather}
	\homsetin{\vect}{\homset{x y^* y}{z w^*}}{\homset{x}{z w^*}} \otimes \homsetin{\vect}{\homset{x y^*}{z}}{\homset{x y^* y}{z w^*}} \xrightarrow{\circ} \homsetin{\vect}{\homset{x y^*}{z}}{\homset{x}{z w^*}},\nonumber\\
	\pica{1}{pic_emptybox_postcomp} \circ \pica{1}{pic_emptybox_precomp} = \pica{1}{pic_emptybox_ex}.\label{eq_emptybox_comp}
\end{gather}
In Sections~\ref{sec_centers} and \ref{sec_balfuns}, we will see further generalizations of these diagrams.

\subsection{Hom-space contractions} \label{sec_homspace_contractions}
For a finite-dimensional vector space $V$, we follow \cite{tv} and denote the image of $\id_V$ under the identification $\homset{V}{V} \cong V \otimes V^*$ by $\tenhomid \in V \otimes V^*$.
It is useful to recall that an explicit form of~$\tenhomid$ is given as follows: Pick a basis $(\varphi_\alpha)$ of~$V$ and denote the dual basis of $V^*$ by $(\varphi^*_\alpha)$. Then~${\tenhomid = \sum_\alpha \varphi_\alpha \otimes \varphi_\alpha^*}$.
The following Sweedler-type notation is convenient:
$\tenhomid = \sweed{\tenhomid}{V} \otimes \sweed{\tenhomid}{V^*} \in V \otimes V^*$.

Let $V$ be a Hom-space $V = \homsetin{\ccat}{x}{y}$ of a Calabi--Yau category $\ccat$.
It then makes sense to consider the image of $\tenhomid$ under the isomorphism $\homsetin{\ccat}{x}{y} \otimes \homsetin{\ccat}{x}{y}^* \cong \homsetin{\ccat}{x}{y} \otimes \homsetin{\ccat}{y}{x}$.
We denote this image by $\tenhomid$ as well, and its Sweedler components by $\tenhomid = \basisel{x}{y} \otimes \basisel{y}{x}$.
When using graphical notation, we write
\begin{equation*}
	\tenhomid = \pica{1}{pic_homcontraction_1} \otimes \pica{1}{pic_homcontraction_2}.
\end{equation*}

This allows us to express \emph{resolutions of the identity} of an object $x$ in a Calabi--Yau category~$\ccat$ diagrammatically
\begin{equation}\label{eq_res_of_id}
	\sum_\si{x} \pdim{\si{x}} \basisel{\si{x}}{x} \circ \basisel{x}{\si{x}} = \sum_\si{x} \pdim{\si{x}} \pica{1}{pic_resofid} = \id_x.
\end{equation}
Recall that according to our conventions, this is a sum over isomorphism classes of simple objects~$\si{x}$ of $\ccat$.
The following variation of \eqref{eq_res_of_id} will also be useful
\begin{equation}\label{eq_parallel_res}
	\sum_\si{x} \pdim{\si{x}} (\basisel{\si{x}}{y} \circ \basisel{x}{\si{x}}) \otimes (\basisel{\si{x}}{x} \circ \basisel{y}{\si{x}}) = \basisel{x}{y} \otimes \basisel{y}{x}.
\end{equation}

The notation can also be used to conveniently express the multiplicity $\dim \homset{x}{\si{x}}$ of a simple object $\si{x}$ in an object $x$ in a Calabi--Yau category $\ccat$
$\dim \homset{x}{\si{x}} = \trace{\ccat} (\basisel{\si{x}}{x} \circ \basisel{x}{\si{x}})$.

\subsection{Traced bimodule categories}\label{sec_traced_bimods}
Given a monoidal category $\acat$, we consider \emph{left $\acat$-module categories} $\mcat$.
This is a category equipped with an (exact) \emph{action functor} $\acat \times \mcat \to \mcat$, denoted $(a,m) \mapsto am$, together with coherence data and axioms.
For details, we refer to \cite{egno}.

We are interested only in finite and semisimple module categories over fusion categories.
In particular, all of our module categories are exact.

There are the obvious related notions of right module categories and bimodule categories.
A~left $\acat$-module category can be equivalently described as a right $\revcat{\acat}$-module category.
An~$\acat$-$\bcat$-bimodule category is the same as a left $\acat \deligne \revcat{\bcat}$-module category.
Lastly, every linear category is canonically equipped with the structure of a left or right $\vect$-module category. Hence, a left $\acat$-module category can also be seen as an $\acat$-$\vect$-bimodule category.

We will make tacit use of these identifications.
To keep in mind which type of module category a particular category is, we sometimes write, for instance, $\bimod{\acat}{\mcat}{\bcat} = \mcat$ for an $\acat$-$\bcat$-bimodule category, and similar for left and right module categories.

For every fusion category $\acat$, we can consider the \emph{regular bimodule category} $\bimod{\acat}{\acat}{\acat}$, whose action functors are given by the monoidal product.

Because we work over \emph{finite} tensor categories, the bimodule categories we consider have internal Homs with respect to each action.
This means that given objects $m,n$ in a bimodule category $\bimod{\acat}{\mcat}{\bcat}$, there are \emph{internal Hom-objects} $\ihom{\acat}{m}{n} \in \acat$ and $\ihom{\bcat}{m}{n} \in \bcat$, characterized by the existence of isomorphisms
$\homsetin{\acat}{a}{\ihom{\acat}{m}{n}} \cong \homsetin{\mcat}{am}{n}$, $ \homsetin{\bcat}{b}{\ihom{\acat}{m}{n}} \cong \homsetin{\mcat}{bm}{n}$.

Pivotal structures on monoidal categories lead to a unique module structure on the opposite of a module category:
For a bimodule category $\bimod{\acat}{\mcat}{\bcat}$ over pivotal categories $\acat$ and $\bcat$, the opposite category $\opcat{\mcat}$ is canonically a $\bcat$-$\acat$-bimodule category, with actions defined by $b \opcat{m} a := \opcat{a^*  m  b^*}$.

If a bimodule category $\bimod{\acat}{\mcat}{\bcat}$ over pivotal categories $\acat$ and $\bcat$ also carries the structure of a~Calabi--Yau category, it is natural to impose the following consistency condition:
Let~${a\in \acat}$, ${b\in \bcat}$, and $m \in \mcat$ be objects, and let $f\colon amb \to amb$ be an endomorphism in $\mcat$.
The morphism
\begin{gather}\label{eq_ptr_def}
	\mathrm{ptr}(f) := (\ev{a^*}  m  \ev{b}) \circ (a^*  f  b^*) \circ (\coev{a}  m  \coev{b^*}),
\end{gather}
which is an endomorphism of $m$, is called the \emph{partial trace} of $f$ with respect to $a$ and $b$.
The consistency condition between the Calabi--Yau structure and the module actions on $\mcat$ mentioned above is
\begin{equation}\label{eq_module_trace_cond}
	\trace{\mcat} (f) = \trace{\mcat} (\mathrm{ptr}(f)).
\end{equation}
Equation \eqref{eq_module_trace_cond} can be illustrated in a string diagram \cite[equation~(3.31)]{schaumann-modtraces}
\begin{equation*}
	\trace{\mcat} \left(\pica{1}{tracedef_a_1}\right) = \trace{\mcat} \underbrace{\left(\pica{1}{tracedef_b_1}\right)}_{\mathrm{ptr}(f)}.
\end{equation*}
Following \cite{schaumann-modtraces}, we call a Calabi--Yau structure on a bimodule category over pivotal fusion categories which satisfies the partial trace property \eqref{eq_module_trace_cond} a \emph{bimodule trace}.
The corresponding notion for a one-sided module category is called a \emph{module trace}.
We refer to a (bi-)module category equipped with a (bi-)module trace as a \emph{traced $($bi-$)$module category}.
The proof of the next lemma is left to the reader.
\begin{Lemma}\label{lem_traces_and_sphericity}
	Let $\acat$ be a pivotal fusion category. The following statements are equivalent.
	\begin{enumerate}\itemsep=0pt
		\item[$(1)$] $\acat$ is a spherical fusion category.
		\item[$(2)$] The regular bimodule category $\bimod{\acat}{\acat}{\acat}$ admits a bimodule trace.
	\end{enumerate}
	If either statement is true, then the bimodule trace on $\bimod{\acat}{\acat}{\acat}$ is unique up to a normalization factor.
	If in addition, the bimodule trace is normalized such that the dimension of the monoidal unit is $1$, then the bimodule trace is equal to the $($left or right$)$ pivotal trace.
\end{Lemma}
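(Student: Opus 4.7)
The plan is to establish the equivalence by translating the partial trace condition \eqref{eq_module_trace_cond} applied to the regular bimodule $\bimod{\acat}{\acat}{\acat}$ into relations between the left and right pivotal traces of $\acat$. Uniqueness and the normalization statement will fall out of the same calculation that proves $(2) \Rightarrow (1)$.

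For the implication $(1) \Rightarrow (2)$, I take the common (left-equals-right) pivotal trace as the candidate trace $\trace{\acat}$ on $\acat$. Symmetry and non-degeneracy are standard for spherical fusion categories, so only the axiom \eqref{eq_module_trace_cond} requires a check. Graphically, for $f \colon amb \to amb$ the scalar $\trace{\acat}(\mathrm{ptr}(f))$ is obtained by closing the $a$-strand on the left through $(\ev{a^*}, \coev{a})$, closing the $b$-strand on the right through $(\ev{b}, \coev{b^*})$, and finally closing the remaining $m$-strand via the pivotal trace. Sphericity is exactly the condition that allows strands to be freely isotoped between left and right closures, so this iterated closure can be deformed into the single pivotal closure of $f$ regarded as an endomorphism of the object $amb$, which is $\trace{\acat}(f)$.

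For the implication $(2) \Rightarrow (1)$, let $\trace{\acat}$ be any bimodule trace on $\bimod{\acat}{\acat}{\acat}$ and set $\lambda := \pdim{\one}$; non-degeneracy forces $\lambda \neq 0$. Specializing \eqref{eq_module_trace_cond} to $m = \one$ and $b = \one$, an endomorphism $f \colon a \to a$ has
\begin{equation*}
    \mathrm{ptr}(f) = \ev{a^*} \circ (a^* \cdot f) \circ \coev{a} \in \homset{\one}{\one},
\end{equation*}
which is exactly the left pivotal trace $\mathrm{Tr}^{\mathrm{L}}(f)$ viewed as a scalar. Since $\trace{\acat}$ sends the endomorphism $\mathrm{Tr}^{\mathrm{L}}(f) \cdot \id_\one$ to $\lambda \cdot \mathrm{Tr}^{\mathrm{L}}(f)$, the axiom reads $\trace{\acat}(f) = \lambda \, \mathrm{Tr}^{\mathrm{L}}(f)$. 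Dually, specializing to $a = \one$ and $m = \one$ yields $\trace{\acat}(f) = \lambda \, \mathrm{Tr}^{\mathrm{R}}(f)$. Since $\lambda \neq 0$, this forces $\mathrm{Tr}^{\mathrm{L}} = \mathrm{Tr}^{\mathrm{R}}$, i.e., $\acat$ is spherical. The same pair of identities shows that any bimodule trace on $\bimod{\acat}{\acat}{\acat}$ is determined by the single scalar $\lambda$, so the trace is unique up to a global normalization, and the normalization $\lambda = 1$ recovers precisely the pivotal trace.

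The main obstacle is the graphical argument behind $(1) \Rightarrow (2)$: one must check carefully that performing the asymmetric left-and-right partial closures encoded in \eqref{eq_ptr_def} and then closing the $m$-strand gives the same scalar as the uniform pivotal trace of $f \colon amb \to amb$. This is the one point where sphericity, rather than mere pivotality, is genuinely used, since in a non-spherical pivotal category the left and right closures of a single strand would differ and the equality could fail.
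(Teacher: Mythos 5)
Your proposal is correct: the derivation $\trace{\acat}(f)=\pdim{\one}\,\mathrm{Tr}^{\mathrm{L}}(f)=\pdim{\one}\,\mathrm{Tr}^{\mathrm{R}}(f)$ from the partial-trace axiom specialized at $m=b=\one$ and $a=m=\one$ gives $(2)\Rightarrow(1)$, uniqueness up to the scalar $\pdim{\one}$, and the normalization statement, while the spherical closure argument gives $(1)\Rightarrow(2)$. The paper explicitly leaves this proof to the reader, and your argument is exactly the standard one expected there, so there is nothing further to compare.
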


\begin{Remark}\label{rem_modtrace_unique_exist}
	Given two $\acat$-module categories $\mcat$ and $\ncat$, the product category $\mcat \times \ncat$ can be equipped with the structure of an $\acat$-module category by simultaneous action on the components, which is denoted $\mcat \boxplus \ncat$.
	This module category is called the direct sum of~$\mcat$ and~$\ncat$.
	A~module category $\mcat$ is called \emph{indecomposable} if the existence of an equivalence of module categories~${\mcat \cong \ncat \boxplus \kcat}$ implies that either $\ncat$ or $\kcat$ is equivalent to $\mcat$, with the other being trivial.
	The structure of a bimodule trace on an indecomposable bimodule category is, if it exists, unique up to a scalar \cite[Proposition~4.4]{schaumann-modtraces}.
	The question whether or not a (bi-)module category admits a bimodule trace can be stated as an eigenvalue problem \cite[Propositions~5.4 and~5.7]{schaumann-modtraces}.
\end{Remark}

\begin{Remark}\label{rem_modtrace_fixed_point}
	The relevance of the structure of a bimodule trace for TFTs can also be understood from the perspective of homotopy fixed points:
	It is expected that spherical fusion categories are $\mathrm{SO}(3)$-fixed points in the homotopy category of fusion categories \cite{dss-dualizable-TCs}.
	We expect traced bimodule categories to be the corresponding $\mathrm{SO}(3)$-fixed point morphisms.
\end{Remark}

A module trace also provides a relation between the (pivotal) dimension of an internal Hom object, and the dimensions of the objects in the module category.

\begin{Lemma}\label{lem_ihom_dimensions}
	Given a left $\acat$-module category $\mcat$ over a spherical fusion category $\acat$ with a~module trace, the dimension of the internal Hom for simple objects $\si{m},\si{n} \in \mcat$ can be computed as follows
	\begin{equation*}
		\pdim{\ihom{\acat}{\si{m}}{\si{n}}} = \frac{\catdim{\acat}}{\catdim{\mcat}}  \pdim{\si{m}}\pdim{\si{n}}.
	\end{equation*}
\end{Lemma}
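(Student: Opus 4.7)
The plan is to recognize, for fixed $\si{m}$, the function $\si{n}\mapsto\pdim{\ihom{\acat}{\si{m}}{\si{n}}}$ as a scalar multiple of the dimension vector of $\mcat$ and then pin down the scalar by a pairing argument. I will assume $\mcat$ is indecomposable; otherwise the formula fails for $\si{m},\si{n}$ in distinct summands, since then the internal Hom vanishes.

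First, semisimplicity of $\acat$ together with the defining adjunction of the internal Hom gives
\begin{equation*}
\pdim{\ihom{\acat}{\si{m}}{\si{n}}} \;=\; \sum_{\si{a}\in\acat}\pdim{\si{a}}\,\dim\homsetin{\mcat}{\si{a}\si{m}}{\si{n}}.
\end{equation*}
The crucial intermediate identity is
\begin{equation*}
\pdim{\ihom{\acat}{\si{m}}{b\si{n}}} \;=\; \pdim{b}\cdot\pdim{\ihom{\acat}{\si{m}}{\si{n}}}\qquad\text{for every }b\in\acat,
\end{equation*}
which I would derive by combining three ingredients: the adjunction $b\cdot-\dashv b^*\cdot-$ on $\mcat$ coming from the pivotal structure of $\acat$ (used to rewrite $\homsetin{\mcat}{\si{a}\si{m}}{b\si{n}}\cong\homsetin{\mcat}{b^*\si{a}\si{m}}{\si{n}}$), a decomposition of $b^*\si{a}$ into simples of $\acat$, and the collapse $\sum_{\si{a}}\pdim{\si{a}}\dim\homset{b\si{c}}{\si{a}}=\pdim{b\si{c}}=\pdim{b}\pdim{\si{c}}$. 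Decomposing $b\si{n}$ in simples of $\mcat$, this expresses precisely the statement that $v_{\si{m}}:=(\pdim{\ihom{\acat}{\si{m}}{\si{n}'}})_{\si{n}'}$ is an eigenvector of the ``fusion with $b$'' operators on the $\field$-span of simples of $\mcat$, with eigenvalue $\pdim{b}$ for each $b$.

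The dimension vector $d=(\pdim{\si{n}'})_{\si{n}'}$ satisfies exactly the same eigenvalue relations, since the partial trace property \eqref{eq_module_trace_cond} applied to $\id_{b\si{m}}$ gives $\pdim{b\si{m}}=\pdim{b}\pdim{\si{m}}$, which, decomposed in simples of $\mcat$, is the same system. Indecomposability of $\mcat$ enters through Remark~\ref{rem_modtrace_unique_exist}: the module trace on an indecomposable module category is unique up to rescaling, so the simultaneous eigenspace for these eigenvalues is one-dimensional. Consequently $v_{\si{m}}=c_{\si{m}}\cdot d$ for some scalar $c_{\si{m}}$ depending only on $\si{m}$.

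Finally, I would determine $c_{\si{m}}$ by taking the inner product of the relation $v_{\si{m}}=c_{\si{m}} d$ with $d$. On the one hand $\sum_{\si{n}'}\pdim{\si{n}'}(v_{\si{m}})_{\si{n}'}=c_{\si{m}}\catdim{\mcat}$; on the other, using the initial expansion, Fubini, and the partial trace identity,
\begin{equation*}
\sum_{\si{n}'}\pdim{\si{n}'}\pdim{\ihom{\acat}{\si{m}}{\si{n}'}} \;=\; \sum_{\si{a}}\pdim{\si{a}}\sum_{\si{n}'}\pdim{\si{n}'}\dim\homsetin{\mcat}{\si{a}\si{m}}{\si{n}'} \;=\; \sum_{\si{a}}\pdim{\si{a}}\pdim{\si{a}\si{m}} \;=\; \catdim{\acat}\pdim{\si{m}}.
\end{equation*}
Equating yields $c_{\si{m}}=\tfrac{\catdim{\acat}}{\catdim{\mcat}}\pdim{\si{m}}$, which gives the desired identity. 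The principal obstacle is the one-dimensionality of the common eigenspace in the third step: it rests on the non-trivial Perron--Frobenius-type characterization of module traces from \cite{schaumann-modtraces} recorded in Remark~\ref{rem_modtrace_unique_exist}.
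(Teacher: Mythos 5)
Your steps (1), (2), (4) and the closing pairing computation are correct, and your remark that indecomposability is genuinely needed is apt (the paper's own proof needs it too, through its citation of Schaumann). The genuine gap is the justification of the pivotal step: the claim that the simultaneous eigenspace $E=\{w \,:\, \sum_{\si{n}'}\dim\homsetin{\mcat}{\si{n}'}{\si{b}\si{n}}\,w_{\si{n}'}=\pdim{\si{b}}\,w_{\si{n}} \ \forall\, \si{b},\si{n}\}$ is one-dimensional. Uniqueness of the module trace up to a scalar (Remark~\ref{rem_modtrace_unique_exist}) only constrains the set of vectors that actually arise as dimension vectors of module traces; to deduce $\dim E=1$ from it you would need the converse, namely that every vector in $E$ (at least every one with non-vanishing entries, and you would also have to rule out eigenvectors with zero entries) is the dimension vector of a module trace. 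That converse is not recorded in the paper and is not automatic: a Calabi--Yau structure on $\mcat$ whose dimension vector satisfies $\pdim{b\si{m}}=\pdim{b}\pdim{\si{m}}$ only fulfils the compatibility \eqref{eq_module_trace_cond} for identity morphisms (and morphisms of the form $\id_b\, u$), whereas a module trace demands it for \emph{all} endomorphisms of $b\si{m}$, a strictly stronger system of linear conditions on each endomorphism algebra. A Perron--Frobenius argument does not rescue the step in general either, since pivotal dimensions over $\field$ need not be positive reals, so $\sum_{\si{b}}\pdim{\si{b}}M_{\si{b}}$ (with $M_{\si{b}}$ the multiplicity matrices) need not be a nonnegative matrix; that route only works in the pseudo-unitary case.

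What actually supplies the missing multiplicity-one statement is exactly the input the paper uses: by \cite[Proposition~5.7]{schaumann-modtraces}, the dimension matrix $Q$ with entries $Q_{\si{n}\si{m}}=\pdim{\ihom{\acat}{\si{m}}{\si{n}}}$ has rank one with non-zero entries (for indecomposable $\mcat$ with a module trace). Since $Q=\sum_{\si{b}}\pdim{\si{b}}M_{\si{b}}$, any $w\in E$ satisfies $Q^{T}w=\catdim{\acat}\,w$, and because $Q^{T}$ has rank one and $\catdim{\acat}\neq 0$, this eigenspace is one-dimensional, so $E=\field\,d$ as you wanted. With that citation inserted in place of the appeal to uniqueness, your argument goes through and is a legitimate variant of the paper's proof: the paper combines rank-one of $Q$ with its symmetry (via $\ihom{\acat}{m}{n}^*\cong\ihom{\acat}{n}{m}$) and the eigenvector property of the dimension vector, whereas your pairing with $d$ avoids the symmetry step at the cost of needing the multiplicity-one fact in the form above.
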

\begin{proof}
	Following \cite[Section~5]{schaumann-modtraces}, we call the square matrix $Q$ with entries
$Q_{\si{nm}} := \pdim{\ihom{\acat}{\si{m}}{\si{n}}}
$
	for (representatives of isoclasses of) simple objects $\si{m}, \si{n} \in \mcat$ the \emph{dimension matrix} of $\mcat$.
	From \cite[Proposition~5.7]{schaumann-modtraces}, we know that $Q$ is of rank 1 with only non-zero entries.
	This implies that~$Q$ is of the form $Q_\si{nm} = \xi_\si{n}\zeta_\si{m}$ for non-zero scalars $\xi_\si{n}, \zeta_\si{m} \in \field$.
	
	Moreover, the module trace on $\mcat$ causes the dual of the internal Hom to behave much like the dual of the Hom-space
	\begin{equation} \label{eq_dual_inthom}
		\ihom{\acat}{m}{n}^* = \ihom{\acat}{n}{m}
	\end{equation}
	for all objects $m,n \in \mcat$.
	This is easy to check using the Yoneda lemma; for $a \in \acat$, we have
	\begin{align*}
		\begin{aligned}
			\homset{a}{\ihom{\acat}{n}{m}} &\cong \homset{an}{m} \cong \homset{m}{an}^* \cong \homset{a^*m}{n}^* \\
			&\cong \homset{a^*}{\ihom{\acat}{m}{n}}^* \cong \homset{\ihom{\acat}{m}{n}}{a^*} \cong \homset{a}{\ihom{\acat}{m}{n}^*}.
		\end{aligned}
	\end{align*}
	Due to \eqref{eq_dual_inthom}, the dimension matrix $Q$ is symmetric, which means that $\xi_\si{m} = \zeta_\si{m}$ for all simple representatives $\si{m} \in \mcat$.
	
	The dimension vector is an eigenvector of the dimension matrix $Q$ with eigenvalue $\catdim{\acat}$, see \cite[Proposition~5.4]{schaumann-modtraces}
	\begin{equation*}
		\sum_\si{m} Q_\si{nm} \pdim{\si{m}} = \sum_\si{m} \xi_\si{n}\xi_\si{m} \pdim{\si{m}} = \catdim{\acat} \pdim{\si{n}}.
	\end{equation*}
	As $\xi_\si{n}$ and $\catdim{\acat}$ are non-zero, this is equivalent to the following:
\[
\frac{\sum_\si{m} \xi_\si{m} \pdim{\si{m}}}{\catdim{\acat}} = \frac{\pdim{\si{n}}}{\xi_\si{n}} =: \lambda,
\]
	where we introduced a non-zero scalar $\lambda$ that does not depend on $\si{n}$.
	Substituting $\xi_\si{m} = \frac{\pdim{\si{m}}}{\lambda}$, we find
	\begin{equation*}
		\frac{\sum_\si{m} \pdim{\si{m}} \pdim{\si{m}}}{\lambda  \catdim{\acat}} = \lambda, \qquad \text{and hence} \qquad \frac{\catdim{\mcat}}{\catdim{\acat}} = \lambda^2.
	\end{equation*}
	Finally, we obtain the statement of the lemma
	\begin{align*}
			\pdim{\ihom{\acat}{\si{m}}{\si{n}}} &= Q_{\si{nm}} = \xi_\si{n}\xi_\si{m} = \frac{\pdim{\si{n}}\pdim{\si{m}}}{\lambda^2} = \frac{\catdim{\acat}}{\catdim{\mcat}}  \pdim{\si{m}}\pdim{\si{n}}.\tag*{\qed}
	\end{align*}\renewcommand{\qed}{}
\end{proof}

The following lemma extends the $\star$-notation from Section~\ref{sec_homspace_contractions} to string diagrams in traced module categories.
More, precisely, the following lemma holds.
\begin{Lemma}\label{lem_dual_homs_and_duality}
	Let $\mcat$ be a left $\acat$-module category over a spherical fusion category $\acat$ with a~module trace.
	Let there be objects $m$, $n\in \mcat$ and $a \in \acat$. Denote by $(\varphi_\alpha)$ a basis of the Hom-space~${\homsetin{\mcat}{m}{an}}$, and the elements of the dual base by $\varphi^*_\alpha \in \homset{an}{m}$.
	
	Then the $\alpha$-indexed families of morphisms
$(\ev{a^*} n) \circ (a^* \varphi_\alpha)$ and $ (a^* \varphi_\alpha^*) \circ (\coev{a} n)
$
	form dual bases of the Hom-sets $\homset{a^* m}{n}$ and $\homset{n}{a^* m}$, respectively.
\end{Lemma}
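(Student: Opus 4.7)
The plan is to verify the two defining properties of a pair of dual bases with respect to the trace pairing $\trace{\mcat}(-\circ-)\colon \homset{a^*m}{n} \otimes \homset{n}{a^*m} \to \field$: that the two given families are bases, and that they pair to $\delta_{\alpha\beta}$.

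For the basis property, I would observe that the assignments $\varphi \mapsto (\ev{a^*} n) \circ (a^* \varphi)$ and $\xi \mapsto (a^* \xi) \circ (\coev{a} n)$ are the standard adjunction isomorphisms $\homset{m}{an} \cong \homset{a^* m}{n}$ and $\homset{an}{m} \cong \homset{n}{a^* m}$ coming from the duality $(a,a^*)$ acting on $\mcat$; the snake identities for the pivotal category $\acat$ acting on $\mcat$ supply explicit inverses built from $\ev{a}$ and $\coev{a^*}$. In particular, both proposed families arise as images of bases under linear isomorphisms, and so are themselves bases of the respective Hom-spaces.

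For the duality, I would use the symmetry of $\trace{\mcat}$ and the functoriality of the action of $a^*$ to rewrite
\begin{equation*}
\trace{\mcat}\bigl((a^* \varphi_\beta^*) \circ (\coev{a} n) \circ (\ev{a^*} n) \circ (a^* \varphi_\alpha)\bigr)
= \trace{\mcat}\bigl((\ev{a^*} n) \circ (a^* (\varphi_\alpha \circ \varphi_\beta^*)) \circ (\coev{a} n)\bigr).
\end{equation*}
The right-hand side is precisely the trace of the partial trace $\mathrm{ptr}(\varphi_\alpha \circ \varphi_\beta^*)$ of the endomorphism $\varphi_\alpha \circ \varphi_\beta^* \in \homset{an}{an}$, in the sense of \eqref{eq_ptr_def} specialized to the one-sided case (no right $\bcat$-action). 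The module trace axiom \eqref{eq_module_trace_cond} then yields
\begin{equation*}
\trace{\mcat}\bigl(\mathrm{ptr}(\varphi_\alpha \circ \varphi_\beta^*)\bigr) = \trace{\mcat}(\varphi_\alpha \circ \varphi_\beta^*) = \delta_{\alpha\beta},
\end{equation*}
where the last equality is the defining property of $(\varphi_\alpha^*)$ as the dual basis of $(\varphi_\alpha)$.

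I do not anticipate a serious obstacle. The argument rests on two ingredients whose interplay is essentially tautological: the adjunction $(a\cdot-)\dashv(a^*\cdot-)$ on $\mcat$ identifies the relevant Hom-spaces and transports bases, and the module trace axiom ensures this transport is compatible with the trace pairing. The only mild subtlety is recognizing $\psi_\alpha\circ\psi_\beta^*$ as a partial trace on the nose, which is immediate once one distributes the action across the composition.
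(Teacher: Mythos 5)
Your proposal is correct and takes essentially the same route as the paper: after a cyclic rearrangement, the pairing is recognized as $\trace{\mcat}(\mathrm{ptr}(\varphi_\alpha\circ\varphi_\beta^*))$ and the module trace axiom \eqref{eq_module_trace_cond} reduces it to $\trace{\mcat}(\varphi_\alpha\circ\varphi_\beta^*)=\delta_{\alpha\beta}$. Your explicit verification that the two families are bases (via the adjunction isomorphisms) is a detail the paper leaves implicit but is a welcome addition.
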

Using the $\star$-notation introduced in Section~\ref{sec_homspace_contractions}, this statement can be expressed as the equality
\begin{equation*}
\basisel{a^* m}{n} \otimes \basisel{n}{a^* m} = ( (\ev{a^*} n) \circ (a^* \basisel{m}{an}) ) \otimes ((a^* \basisel{an}{m}) \circ (\coev{a} n)),
\end{equation*}
or graphically as
\begin{equation*}
\pica{1}{pic_modtracedualbase_1} \otimes \pica{1}{pic_modtracedualbase_2} = \pica{1}{pic_modtracedualbase_3} \otimes \pica{1}{pic_modtracedualbase_4}.
\end{equation*}
\begin{proof}
	We merely need to show that the duality relation
	\begin{equation}\label{eq_moduleduality_pf_1}
		\trace{\mcat} ( (\ev{a^*} n) \circ (a^* \varphi_\alpha) \circ (a^* \varphi_{\tilde{\alpha}}^*) \circ (\coev{a} n) ) = \delta_{\alpha, \tilde{\alpha}}
	\end{equation}
	holds.
	Using the compatibility \eqref{eq_module_trace_cond} of the module trace with the partial trace, it is easy to see that the duality relation \eqref{eq_moduleduality_pf_1} is inherited from the duality relation of the original dual bases~$\varphi_\alpha$,~$\varphi_\alpha^*$
	\begin{gather*}
			\trace{\mcat} ( (\ev{a^*} n) \circ (a^* \varphi_\alpha) \circ (a^* \varphi_{\tilde{\alpha}}^*) \circ (\coev{a} n) )
			\\
\qquad \eqwithref{eq_ptr_def} \trace{\mcat} ( \mathrm{ptr}( \varphi_\alpha \circ \varphi_{\tilde{\alpha}}^*) )
			\eqwithref{eq_module_trace_cond} \trace{\mcat} ( \varphi_\alpha \circ \varphi_{\tilde{\alpha}}^*)
			 = \delta_{\alpha, \tilde{\alpha}}.\tag*{\qed}
	\end{gather*}\renewcommand{\qed}{}
\end{proof}

\subsection{Centers of bimodule categories}\label{sec_centers}
Let $\bimod{\acat}{\mcat}{\acat}$ be an $\acat$-$\acat$-bimodule category.
Given an object $m\in \mcat$, a family of isomorphisms $\halfbraid{z}_a \colon am \to ma$ for each object $a\in \acat$, satisfying the conditions $\halfbraid{z}_{ab} = (\halfbraid{z}_a  b) \circ (a  \halfbraid{z}_b)$ and~${\halfbraid{z}_\one = \id_m}$ is called a \emph{balancing} for $m$.
The linear category $\modulecent{\acat}{\mcat}$, whose objects are pairs~${z = (m, \halfbraid{z})}$, where $m\in \mcat$ and $\halfbraid{z}$ is a balancing for $m$, and whose morphisms $f\colon z \to z' = (m', \halfbraid{z'})$ are morphisms $f\colon m\to m'$ satisfying the condition that $\halfbraid{z'}_a \circ (a  f) = (f a) \circ \halfbraid{z}_a$, is called the \emph{center} of $\mcat$
We denote the forgetful functor $z\mapsto m$, which forgets the balancing, by $U \colon \modulecent{\acat}{\mcat} \to \mcat$.
If $\mcat$ is the regular bimodule category $\bimod{\acat}{\acat}{\acat}$, the center $\modulecent{\acat}{\acat}$ is the Drinfeld center of $\acat$.

Given objects $z \in \modulecent{\acat}{\mcat}$ and $a\in \acat$, we define a particular pair of morphisms in $\mcat$
\begin{equation*}
	\brev{z}{a}\colon\ a U(z) a^* \to U(z) \qquad\text{and}\qquad \cobrev{z}{a} \colon\ U(z) \to a^* U(z) a,
\end{equation*}
to be called \emph{braided evaluation} and \emph{braided coevaluation}.
They can be defined in two equivalent ways, which are the left- and right-hand sides of the equations below
\begin{align}
	&\brev{z}{a}:= (U(z)  \ev{a}) \circ (\halfbraid{z}_a a^*) = (\ev{a}  U(z)) \circ \big(a (\halfbraid{z}_{a^*})^{-1}\big), \nonumber\\
	&\cobrev{z}{a}:= \big((\halfbraid{z}_{a^*}\big)^{-1}  a) \circ (U(z)  \coev{a}) = (a^*  \halfbraid{z}_a) \circ (\coev{a}  U(z)).\label{eq_def_brev}
\end{align}
In diagrammatic notation, we represent the braided evaluation and coevaluation as
\begin{equation}\label{eq_brev_cobrev_pics}
	\brev{z}{a} = \pica{1}{pic_brev} \qquad \text{and} \qquad \cobrev{z}{a} = \pica{1}{pic_cobrev},
\end{equation}
where we introduced the convention that objects of the center are drawn as red lines.
Pictures such as \eqref{eq_brev_cobrev_pics} containing objects in the center $\modulecent{\acat}{\mcat}$ of a bimodule category $\mcat$ are always diagrams in $\mcat$, not $\modulecent{\acat}{\mcat}$.
When evaluating such diagrams, the forgetful functor has to be applied to all objects labeling red lines, and crossings between red and black lines need to be replaced by balancings. Unlike in the diagrammatic calculus for a braided monoidal category, there are no over- and under-crossings that need to be distinguished.
\begin{Remark}\label{rem_modulecent_vect_trivial}
	Any linear category $\xcat$ can be seen as a $\vect$-$\vect$-bimodule category.
	With respect to these $\vect$-actions, every object $x\in \xcat$ admits a balancing, and all balancings are isomorphic. Hence, there is an equivalence of categories $\xcat \cong \modulecent{\vect}{\xcat}$.
\end{Remark}
\begin{Remark}
	An alternative model for the center is given by the category of bimodule functors and bimodule natural transformations $\bimodfun{\acat}{\acat}{\acat}{\mcat}$, see \cite[Section~ 2.8]{enom}: There is a canonical equivalence of categories
$
		\bimodfun{\acat}{\acat}{\acat}{\mcat} \cong \modulecent{\acat}{\mcat}\colon  F \mapsto F(\one)$.
\end{Remark}

\subsection{Balanced functors and their diagrams}\label{sec_balfuns}
The center of a module category $\mcat$ over a monoidal category $\acat$
can also be characterized by a universal property:
A \emph{balanced functor} consists of a functor $F\colon \mcat \to \xcat$, equipped with a~family of isomorphisms
$\gamma_{a,m} \colon F(am) \to F(ma)$, natural in $a \in \acat$ and $m\in \mcat$, which moreover satisfies $\gamma_{ab,m} = \gamma_{b,ma} \circ \gamma_{a,bm}$.
Balanced functors form a category $\balfun{\mcat}{\xcat}$, whose morphisms are balanced natural transformations, i.e., those natural transformations $\eta\colon F \Rightarrow G$ which are compatible with the balancings $\gamma^F$ and $\gamma^G$ of $F$ and $G$ in that $\gamma_{a,m}^G \circ \eta_{am} = \eta_{ma} \circ \gamma_{a,m}^F$.
Up to equivalence, the center $\modulecent{\acat}{\mcat}$ is the unique category such that there is an equivalence of functor categories
$\balfun{\mcat}{\xcat} \cong \funcat{\modulecent{\acat}{\mcat}}{\xcat}$.

Let $\mcat$ be a (left) module category over any monoidal category $\acat$, and let $\ncat$ be a right $\acat$-module category.
The category $\ncat\deligne \mcat$ is an $\acat$-$\acat$-bimodule category. The balanced functors out of categories of this form are of particular interest to us.
We usually denote the balancing of a functor $F\colon \ncat\deligne \mcat \to \xcat$ into a category $\xcat$ as $\bal_{n,a,m} \colon F(na,m) \to F(n,am)$.

The prototypical example of such a balanced functor is the Hom-functor of a module category~$\mcat$ over a rigid monoidal category $\acat$:
Let $\ncat = \opcat{\mcat}$, which is a right $\acat$-module category with action \smash{$\opcat{m} a := \opcat{a^*m}$}, where $a^*$ here denotes the \emph{right} dual.
Then the evaluation morphism defines isomorphisms
\begin{equation*}
	\bigl\langle\opcat{m}a,m'\bigr\rangle = \bigl\langle\opcat{a^*m},m'\bigr\rangle \cong \bigl\langle\opcat{m},am'\bigr\rangle,
\end{equation*}
which assemble into a balancing of the Hom-functor.
The Hom-functor of a bimodule category~$\bimod{\acat}{\mcat}{\bcat}$ has even more structure, as it is balanced with respect to the $\acat\deligne\revcat{\bcat}$-action on $\mcat$.
In general, if $\ncat$ is a $\bcat$-$\acat$-bimodule category and $\mcat$ is an $\acat$-$\bcat$-bimodule category, then we say a~functor
\begin{equation*}
	F \colon\ \bimod{\bcat}{\ncat}{\acat} \deligne \bimod{\acat}{\mcat}{\bcat} \to \xcat
\end{equation*}
is \emph{bi-balanced}, if it is equipped with a balancing between the right $\acat\deligne\revcat{\bcat}$-action on $\ncat$ and the left $\acat\deligne\revcat{\bcat}$-action on $\mcat$.

We also call functors
\begin{equation*}
	G \colon\ \bimod{\acat}{\ncat}{\acat} \deligne \bimod{\bcat}{\mcat}{\bcat} \to \xcat,
\end{equation*}
bi-balanced, if they are equipped with two balancings, one for $\mcat$ and one for $\ncat$, which are required to commute.
In the ambiguous case $\acat = \bcat$, a functor can be bi-balanced in either way. We introduce the following terminology
\begin{align}
	&\wick{F \colon\ \c1 {_\acat} {\ncat} \c1 {_\acat} \deligne \c2 {_\acat}{\mcat}\c2 {_\acat} \to \xcat} \qquad \text{``disconnected'' balancings} , \label{eq_bibal_ambiguous_disconn}\\
	&\wick{F \colon\ \c2 {_\acat} {\ncat} \c1 {_\acat} \deligne \c1 {_\acat}{\mcat}\c2 {_\acat} \to \xcat} \qquad \text{``connected'' balancings} .\label{eq_bibal_ambiguous_conn}
\end{align}

We next develop a graphical calculus for bi-balanced functors of the ``connected'' type that is tailored to our purposes.
In particular, given a bi-balanced functor $S\colon \opcat{\ncat} \times \mcat \to \xcat$, we will be able to pictorially represent those morphisms in $\xcat$ which are built from
\begin{itemize}\itemsep=0pt
	\item morphisms of the form $S(f,g)$ where $f$ and $g$ are morphisms in $\ncat$ and $\mcat$, respectively; and
	\item the balancing isomorphisms of $S$, and their inverses.
\end{itemize}
Our diagrams specialize to the diagrams we introduced in Section~\ref{sec_pictures} for the choice of the Hom-functor as bi-balanced functor $S$.

For morphisms $m_1 \xrightarrow{f} m_2 \xrightarrow{g} {m_3}$ in $\mcat$ and $n_2 \xrightarrow{h} n_1$ in $\ncat$, we write
\begin{equation*}
	\pica{1}{pic_funcdiag_profun_2} := S\left(h, \pica{1}{pic_strdiag_gf} \right) = S(h, g\circ f),
\end{equation*}
which represents a morphism $S(n_1, m_1) \to S(n_2, m_3)$ in $\xcat$.
Given objects $a \in \acat$, $m \in \mcat$, and~${n \in \ncat}$, we draw the following diagram to represent the left balancing of $S$
\begin{equation}\label{eq_funcdiag_bal}
	\pica{1}{pic_funcdiag_bal} := \bal_{n,a,m} \colon\ S(a^*n,m) \to S(n,am).
\end{equation}
As usual, the juxtaposition of a strands labeled by $a\in \acat$ and $m \in \mcat$ stands for the object~${am \in \mcat}$.
For bi-balanced functors, bending around strands is allowed both on the left and on the right-hand side of diagrams.
To illustrate this, consider the following picture, which represents a morphism $S(a^*n,mb^*) \to S\big(nb,m'\big)$ in $\xcat$
\begin{equation}\label{eq_funcdiag_bibal_ex}
	\pica{1}{pic_funcdiag_bibal_ex} = S(f, nb) \circ \bal_{n,b,am}^{\text{right} -1} \circ \bal_{n,a,mb}^{\text{left}}.
\end{equation}
If we consider the special case where $\ncat = \mcat = \bcat = \bimod{\acat}{\acat}{\acat}$ is the regular bimodule category, and $\xcat = \vect$, we can choose $S$ to be the Hom-functor of $\acat$, which is a bi-balanced functor.
Diagrams such as \eqref{eq_funcdiag_bibal_ex} then specialize to the diagrams we encountered in Section~\ref{sec_pictures}, with the modification that the previously empty double-box is now labeled with the name of a functor, in this case Hom.
The reason why these diagrams really specialize to the diagrams from Section~\ref{sec_pictures} for the Hom-functor is that ``bending around strands'' as in \eqref{eq_funcdiag_bal} encodes the Hom-functor's balancing, which is defined using a coevaluation; on the other hand, the coevaluation is also represented by bending around a strand in the diagrams from Section~\ref{sec_pictures}, which are derived from string diagrams.
The Hom-functor of a module category $\mcat$ over $\acat$ also comes with a linear map $\bigl\langle m,m'\bigr\rangle \to \bigl\langle am,am'\bigr\rangle$ for each triple of objects $a\in \acat$, $m,m'\in \mcat$, which is part of the data of module action functor.
If $\acat$ is a rigid and pivotal, this linear map can be expressed using the balancing of the Hom-functor, together with either an evaluation or a coevaluation.
This is true of any balanced functor $S\colon \opcat{\ncat} \times \mcat \to \xcat$ over a pivotal category $\acat$:
The two expressions
\begin{equation*}
	\bal_{an,a,m} \circ S(\coev{a} n, m)= \pica{1}{pic_funcdiag_sidestrand} = \bal^{-1}_{n,a^*, am} \circ S(n, \ev{a^*} m)
\end{equation*}
are equal.

Of course, diagrams for bi-balanced functors can be composed, and the composition operation is the same we already encountered in \eqref{eq_emptybox_comp}: inserting the pre-composed diagram into the double-box of the post-composed diagram.
In this way, our diagrams are similar to other approaches to graphical calculi, such as the ``corollas'' appearing, for example, in \cite[for the composition see Example 2.14]{fsy-stringnets}.

We leave it to the reader to convince themself that the evaluation of diagrams for bi-balanced functors is well defined and that isotopic diagrams evaluate to equal morphisms.
In any case, the diagrams throughout this work are meant as illustrations that can, at any point, be translated into standard notation.

\subsection{(Split) equalizers of bi-balanced functors}\label{sec_spleqs}
Let now $\bimod{\acat}{\ncat}{\acat}$ and $\bimod{\acat}{\mcat}{\acat}$ be $\acat$-$\acat$-bimodule categories, and let $S \colon \opcat{\ncat} \times \mcat \to \xcat$ be a bi-balanced functor (in the ``connected'' sense of \eqref{eq_bibal_ambiguous_conn}).
From $S$, we will now construct another bilinear functor $\eq S \colon Z_\acat \bigl(\opcat{\ncat}\bigr) \times \modulecent{\acat}{\mcat} \to \xcat$, to be called the \emph{equalizer} of $S$.

Given objects $x \in \modulecent{\acat}{\ncat}$ and $y \in \modulecent{\acat}{\mcat}$, the value $\eq S(x,y) \in \xcat$ is the essentially unique object such that the diagram
\begin{equation}\label{diag_balanced_limit}
	\begin{tikzcd}
		\eq S(x,y) \arrow[r,hook] & S(U(x),U(y)) \arrow[rrr, shift left=2mm, "f= \bigoplus_\si{a} {\pica{0.8}{pic_funcdiag_cross_top}}"{above}] \arrow[rrr, shift right=2mm, "g= \bigoplus_\si{a}{\pica{0.8}{pic_funcdiag_cross_bot}}"{below}] & & & \bigoplus_\si{a} S(\si{a}U(x), U(y)\si{a})
	\end{tikzcd}
\end{equation}
is an equalizer.
It is straightforward to verify that a morphism $\mu\colon y \to y'$ in $\modulecent{\acat}{\mcat}$ defines a~morphism of equalizer diagrams, making $\eq S$ functorial in the second (and in the same way, the first) argument.

\begin{Example}\label{ex_centerhom_is_equalizer}
	If $\mcat = \ncat$ and $S\colon \opcat{\mcat}\times \mcat \to \vect$ is the Hom-functor, then the equalizer of $S$ on $x,y \in \modulecent{\acat}{\mcat}$ is the Hom-functor of the center: $\eq \homsetin{\mcat}{-}{-} \cong \homsetin{\modulecent{\acat}{\mcat}}{-}{-}$.
\end{Example}

\begin{Remark}\label{rem_bal_trafo_maps_equalizers}
	The assignment $S \mapsto \eq S$ is functorial:
	A bi-balanced natural transformation~${\eta \colon S\Rightarrow S'}$ from $S$ to another bi-balanced functor $S'$ defines a morphism of diagrams between the diagram \eqref{diag_balanced_limit} and the diagram obtained from \eqref{diag_balanced_limit} by replacing $S$ with $S'$.
	Therefore, $\eta$~defines a morphism from the equalizer of $S$ to the equalizer of $S'$.
\end{Remark}

To proceed, recall the concept of a split equalizer \cite[Section~VI.6]{maclane-cats}.
\begin{Definition}\label{def_split_equalizer}
	Let $u$, $v$, $w$ be objects of any category $\xcat$.
	A \emph{split equalizer} is a diagram of the form
	\begin{equation*}
		\begin{tikzcd}
			u \arrow[r, "e"{below}] & v \arrow[r, shift left=2mm, "f"{below}] \arrow[r, shift right=2mm, "g"{below}] \arrow[l, bend right, "r"{above}] & w \arrow[l, bend right, shift right=3mm, "t"{above}],
		\end{tikzcd}
	\end{equation*}
	where $t\circ f = \id_v$, $t\circ g = e \circ r$, $f\circ e = g\circ e$, and $r\circ e = \id_u$.
	In particular, $r$ and $t$ are retracts of~$e$ and $f$, respectively.
	
	A \emph{contractible pair} is a pair of parallel morphisms $f,g \colon v\to w$, together with a morphism~${t\colon w \to v}$, such that
	an equalizer of $f$ and $g$ exists, $t\circ f = \id_v$, and $f \circ t \circ g = g\circ t \circ g$.
\end{Definition}
The following result is well known, see \cite[Section~VI.6, Exercise~2]{maclane-cats}.
\begin{Lemma}\label{lem_split_eqs_and_contractible_pairs}
	Split equalizers are in one-to-one correspondence with contractible pairs with a~choice of equalizer.
\end{Lemma}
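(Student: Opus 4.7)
The plan is to exhibit explicit mutually inverse constructions. In one direction, a split equalizer $(u,v,w,e,f,g,r,t)$ already contains an equalizer diagram $e\colon u\to v$ for $f,g$, so it suffices to verify that $(f,g,t)$ is a contractible pair. The identity $t\circ f=\id_v$ is given, and the remaining condition $f\circ t\circ g=g\circ t\circ g$ follows from the split-equalizer relations: $f\circ t\circ g=f\circ e\circ r=g\circ e\circ r=g\circ t\circ g$, using both $t\circ g=e\circ r$ and $f\circ e=g\circ e$.

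In the other direction, start with a contractible pair $(f,g,t)$ and a chosen equalizer $e\colon u\to v$. The key step is to produce the retract $r\colon v\to u$. Consider the morphism $t\circ g\colon v\to v$. Applying $f$ and $g$ and using the contractible-pair identity gives $f\circ(t\circ g)=g\circ(t\circ g)$, so $t\circ g$ equalizes the parallel pair. By the universal property of $e$, there is a unique $r\colon v\to u$ with $e\circ r=t\circ g$. To see $r\circ e=\id_u$, compute $e\circ(r\circ e)=t\circ g\circ e=t\circ f\circ e=\id_v\circ e=e$, using $f\circ e=g\circ e$ and $t\circ f=\id_v$; since $e$ is an equalizer it is monic, so $r\circ e=\id_u$. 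The remaining split-equalizer relations $t\circ f=\id_v$, $t\circ g=e\circ r$, $f\circ e=g\circ e$ hold by hypothesis or construction.

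Finally, one checks that the two procedures are mutually inverse. Starting from a split equalizer, passing to the contractible pair and then reconstructing the retract, the uniqueness clause in the factorization through $e$ forces the reconstructed morphism to coincide with the original $r$, since both satisfy $e\circ r=t\circ g$. Going the other way is immediate, as extracting the contractible pair from the constructed split equalizer returns the original triple $(f,g,t)$ together with the same chosen equalizer.

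I do not anticipate a genuine obstacle here; the only nontrivial point is recognizing that the contractible-pair condition $f\circ t\circ g=g\circ t\circ g$ is exactly what is needed to invoke the universal property of the equalizer on $t\circ g$, and that monicity of the equalizer morphism then forces $r$ to be a retract of $e$. The rest is bookkeeping with the four defining identities in Definition~\ref{def_split_equalizer}.
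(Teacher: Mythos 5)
Your backward direction and the mutual-inverse check are correct and follow the standard argument: since $f\circ t\circ g=g\circ t\circ g$, the morphism $t\circ g$ equalizes the pair, the universal property of the chosen equalizer $e$ gives a unique $r$ with $e\circ r=t\circ g$, and monicity of $e$ together with $t\circ f=\id_v$ and $f\circ e=g\circ e$ yields $r\circ e=\id_u$; uniqueness of the factorization then makes the two assignments mutually inverse. Note that the paper itself does not prove the lemma (it refers to Mac Lane, Section~VI.6, Exercise~2) and only records that the idempotent $h=t\circ g$ from \eqref{eq_idempotent_abstract} has image $u$; your construction is consistent with that hint, since your pair $(e,r)$ is exactly a splitting of this idempotent.

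There is, however, a gap in your forward direction. You assert that a split equalizer ``already contains an equalizer diagram $e\colon u\to v$ for $f,g$'', but Definition~\ref{def_split_equalizer} only imposes the four identities $t\circ f=\id_v$, $t\circ g=e\circ r$, $f\circ e=g\circ e$ and $r\circ e=\id_u$; universality of $e$ is not part of the data. Since a contractible pair requires that an equalizer of $f$ and $g$ exists, and the correspondence must send the split equalizer to $(f,g,t)$ together with the choice of equalizer $e$ itself, you must prove that $e$ is an equalizer of $(f,g)$ --- this is precisely the classical ``split fork is an (absolute) equalizer'' step. The argument is short: if $h\colon z\to v$ satisfies $f\circ h=g\circ h$, then $h=t\circ f\circ h=t\circ g\circ h=e\circ (r\circ h)$, so $h$ factors through $e$, and the factorization is unique because $e$ is split monic ($r\circ e=\id_u$). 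With this addition your proof is complete.
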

The endomorphism
\begin{equation}\label{eq_idempotent_abstract}
	h := t \circ g,
\end{equation}
of $v$, which is defined both for split equalizers and contractible pairs, plays a central role in the proof of Lemma~\ref{lem_split_eqs_and_contractible_pairs}.
This is because $h$ is an idempotent, and its image is $u$.
We are now in a position to prove the next proposition, which is the reason why we are interested in split equalizers.
\begin{Proposition} \label{lem_speqs_of_bibals}
	Let $\acat$ be a spherical fusion category, and let $\mcat$ and $\ncat$ be exact $\acat$-$\acat$-bimodule categories.
	Let $S\colon \opcat{\ncat} \times \mcat \to \xcat$ be a bi-balanced functor into a linear category $\xcat$.
	Then the spherical structure on $\acat$ determines a splitting of the equalizer \eqref{diag_balanced_limit}.
	Hence, the spherical structure on $\acat$ exhibits the equalizer $\eq S$ of $S$ not only as a sub-object, but as a retract.
	
	For objects $x \in \modulecent{\acat}{\ncat}$ and $y \in \modulecent{\acat}{\mcat}$, the idempotent
	\begin{equation*}
		h \in \homsetin{\xcat}{S(U(x), U(y))}{S(U(x), U(y))}
	\end{equation*}
	from \eqref{eq_idempotent_abstract} is in this case given by
	\begin{equation}\label{eq_idemp}
		h= \frac{1}{\catdim{\acat}}  \sum_\si{a} \pdim{\si{a}}  \pica{1}{pic_funcdiag_idemp}.
	\end{equation}
\end{Proposition}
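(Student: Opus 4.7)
The plan is to construct an explicit section $t$ of the morphism $f$ in \eqref{diag_balanced_limit}, making $(f,g,t)$ a contractible pair in the sense of Definition~\ref{def_split_equalizer}, and then to invoke Lemma~\ref{lem_split_eqs_and_contractible_pairs}; the associated idempotent $h = t\circ g$ will then be recognised as the map \eqref{eq_idemp}. On the summand $S(\si{a}U(x), U(y)\si{a})$, I would define $t_\si{a}$ by closing the $\si{a}$-strand: bend the right $\si{a}$-strand back using the right balancing of $S$ and contract it with the left $\si{a}$-strand via $\ev{\si{a}^*}$, weighted by the scalar $\pdim{\si{a}}/\catdim{\acat}$. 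Sphericality of $\acat$ ensures that the mirrored recipe, using $\coev{\si{a}^*}$ and the inverse left balancing, defines the same morphism, so $t$ is attached intrinsically to the data $(\acat,S,\si{a})$ with no preferred choice of side on which to close the strand.

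To verify $t \circ f = \id_{S(U(x),U(y))}$, I would substitute the definition of the $\si{a}$-component of $f$, which inserts an $\si{a}$-loop around $U(x)$ using the half-braiding $\halfbraid{x}_\si{a}$. The pivotal (co)evaluations coming from $t$ then meet the half-braiding in precisely the combination that produces the braided (co)evaluation morphisms of \eqref{eq_def_brev}; the $\si{a}$-loop collapses, yielding a factor $\pdim{\si{a}}$ on the $\si{a}$-summand, so that together with the prefactor from $t$ the weight becomes $\pdim{\si{a}}^2/\catdim{\acat}$. Summed over $\si{a}$, this matches the resolution of identity \eqref{eq_res_of_id} for $U(x)$, which gives $\id$. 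It is here that sphericality (rather than only pivotality) of $\acat$ enters essentially: the left and right pivotal traces around the $\si{a}$-loop must agree in order for the weights to collect into the single scalar that \eqref{eq_res_of_id} requires.

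For the remaining contractibility condition $f\circ h = g\circ h$, I would expand $h = t\circ g$ and obtain doubly-indexed sums over pairs of simple objects of double-loop diagrams around $U(x)$ and $U(y)$. Using the half-braidings $\halfbraid{x}$ and $\halfbraid{y}$ to slide the outer loop past $U(x)$ and $U(y)$, both expressions reduce to a common diagrammatic normal form. Once $(f,g,t)$ is recognised as a contractible pair, the universal property of the equalizer in \eqref{diag_balanced_limit} applied to $h$ furnishes the retract $r\colon S(U(x),U(y))\to \eq S(x,y)$, and Lemma~\ref{lem_split_eqs_and_contractible_pairs} assembles the split equalizer.

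Unwinding $h = t\circ g$ diagrammatically yields precisely the sum displayed in \eqref{eq_idemp}. The main obstacle I anticipate is the bookkeeping in the second step: one must be scrupulous about which of the two balancings of $S$ is invoked at each stage and about the precise placement of the half-braiding $\halfbraid{x}_\si{a}$ so that the cancellation through \eqref{eq_def_brev} leaves behind exactly the prefactor $\pdim{\si{a}}^2/\catdim{\acat}$ required by \eqref{eq_res_of_id}. The diagrammatic calculus for bi-balanced functors developed in Section~\ref{sec_balfuns} should make this tractable.
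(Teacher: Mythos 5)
Your plan follows the paper's own route: construct $t$, verify that $(f,g,t)$ is a contractible pair, invoke Lemma~\ref{lem_split_eqs_and_contractible_pairs}, and read off $h=t\circ g$ as \eqref{eq_idemp}. However, there is a concrete gap in your construction of $t$ and in the mechanism you give for $t\circ f=\id$. A morphism $S(\si{a}U(x),U(y)\si{a})\to S(U(x),U(y))$ cannot be built from the balancings of $S$ and a plain evaluation alone: after bending the outer $\si{a}$-strand around either side of the functor box, the two $\si{a}$-endpoints are still separated by one of the red strands (the one labelled $U(x)$ or the one labelled $U(y)$), so the closing cap must cross a red line, and in this calculus such a crossing \emph{is} a half-braiding of $x$ or $y$. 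The correct $t$ therefore contains a braided (co)evaluation; in the Hom-functor specialization (up to mirror conventions) it reads $t_{\si{a}}(\psi)=\frac{\pdim{\si{a}}}{\catdim{\acat}}\,(U(y)\,\ev{\si{a}})\circ(\psi\,\si{a}^*)\circ\cobrev{x}{\si{a}^*}$, compare Lemma~\ref{lemma_homspace_projection}. This hidden half-braiding is what makes both checks work: in $t\circ f$ the half-braiding supplied by $f$ cancels against its inverse sitting inside $t$, leaving a free $\si{a}$-loop worth $\pdim{\si{a}}$, so each summand is $\frac{\pdim{\si{a}}^2}{\catdim{\acat}}\,\id$ and the sum is the identity because $\catdim{\acat}=\sum_{\si{a}}\pdim{\si{a}}^2$ --- not by the resolution of identity \eqref{eq_res_of_id}, which involves dual bases of Hom-spaces and plays no role at this point; in $t\circ g$ the half-braiding supplied by $g$ upgrades the plain evaluation in $t$ to $\brev{y}{\si{a}}$, which is exactly how \eqref{eq_idemp} (and its specialization \eqref{eq_homspace_projection}) arises. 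With $t$ as you literally describe it, the contraction via $\ev{\si{a}^*}$ is not even available, and if one closes the loop with the single crossing provided by $f$, the resulting loop encircling the red strand once does \emph{not} evaluate to $\pdim{\si{a}}\,\id$ in general, so the retraction identity would fail; the braided (co)evaluations of \eqref{eq_def_brev} each contain a half-braiding, so they cannot be assembled from $f$'s single crossing plus plain (co)evaluations.

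Two smaller points. First, $t$ is a retraction of $f$, not a section (the identity $t\circ f=\id$ you verify is nevertheless the right one). Second, for the condition $f\circ h=g\circ h$ the paper's computation does more than sliding the outer loop with half-braidings: it inserts a resolution of identity to fuse the two parallel simple strands into a single simple strand (this is the extra summation index, with its dimension factors, in the chain of equalities), and your sketch should account for that step. Apart from these repairs, your outline --- including the appeal to Lemma~\ref{lem_split_eqs_and_contractible_pairs} and the identification of $h$ with \eqref{eq_idemp} --- coincides with the argument in the paper.
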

\begin{proof}
	We call the top and bottom arrows of the equalizer diagram \eqref{diag_balanced_limit} $f$ and $g$, respectively.
	By Lemma~\ref{lem_split_eqs_and_contractible_pairs}, we only need to build a morphism $t\colon \bigoplus_\si{a} S(\si{a} U(x), U(y)\si{a}) \to S(U(x),U(y))$ using the pivotal structure of $\acat$, such that $(f,g,t)$ form a contractible pair.
	We define $t$ by
	\begin{equation*}
		t := \frac{1}{\catdim{\acat}}  \sum_\si{a}\pdim{\si{a}} \pica{1}{pic_funcdiag_tmap}\colon\ S(\si{a} U(x), U(y)\si{a}) \to S(U(x),U(y)).
	\end{equation*}
	To prove that $(f,g,t)$ is a contractible pair, two conditions need to be checked.
	The first is that~$t$ is a retraction of $f$
	\begin{align}\label{eq_tmap_retract}
		\begin{aligned}
			t\circ f
			&= \frac{1}{\catdim{\acat}}  \sum_\si{a} \pdim{\si{a}}  \pica{1}{pic_funcdiag_tmap_section} = \id_{S(U(x),U(y))}.
		\end{aligned}
	\end{align}
	
	The second condition is that $f\circ t \circ g = g \circ t \circ g$ holds.
	To verify this, we abbreviate $h:= t\circ g$, which is an endomorphism of $S(U(x),U(y))$, and find by a calculation similar to \eqref{eq_tmap_retract} that~$h$ is indeed given by \eqref{eq_idemp}.
	Since $\frac{1}{\catdim{\acat}}$ is a global factor in the following computation in the Hom-space~$\homsetin{\xcat}{S(U(x),U(y))}{\bigoplus_\si{b} S(\si{b} U(x), U(y)\si{b})}$, we omit it
	\begin{align*}
		\begin{aligned}
			f\circ h &= \bigoplus_\si{b} \sum_\si{a}\pdim{\si{a}}  \pica{1}{pic_funcdiag_eq_1} = \bigoplus_\si{b} \sum_{\si{a},\si{c}}\pdim{\si{a}}\pdim{\si{c}}  \pica{1}{pic_funcdiag_eq_2} \\
			&= \bigoplus_\si{b} \sum_{\si{a},\si{c}}\pdim{\si{a}}\pdim{\si{c}}  \pica{1}{pic_funcdiag_eq_3} = \bigoplus_\si{b} \sum_{\si{c}}\pdim{\si{c}}  \pica{1}{pic_funcdiag_eq_4} = g\circ h.
		\end{aligned}
	\end{align*}
	Hence, $f\circ h = g \circ h$, and the proof is complete.
\end{proof}

\begin{Remark}\label{rem_retr_indep_of_sph}
	The idempotent $h$ from \eqref{eq_idemp} is independent of the choice of spherical structure on $\acat$.
	This follows from a direct calculation using the insight that the square of the dimension of a~simple object $\si{a} \in \acat$ is equal for all spherical structures, which can be found in \cite[Section~7.21]{egno}.
\end{Remark}

\begin{Lemma}\label{lemma_homspace_projection}
	Given an $\acat$-$\acat$-bimodule category $\mcat$, the inclusion of the Hom-space
	\begin{equation*}
		\homsetin{\modulecent{\acat}{\mcat}}{x}{y} \hookrightarrow \homsetin{\mcat}{U(x)}{U(y)}
	\end{equation*}
	has, for a given spherical structure on $\acat$, a canonical retraction $r \colon \homsetin{\mcat}{U(x)}{U(y)} \to \homsetin{\modulecent{\acat}{\mcat}}{x}{y}$, given by
	\begin{align} \label{eq_homspace_projection}
			r (f) &= \frac{1}{\catdim{\acat}} \sum_\si{a} \pdim{\si{a}} \brev{y}{\si{a}} \circ ( \si{a} f  \si{a}^* ) \circ \cobrev{x}{\si{a}^*} = \frac{1}{\catdim{\acat}} \sum_\si{a} \pdim{\si{a}} \pica{1}{pic_centerproj}  .
	\end{align}
\end{Lemma}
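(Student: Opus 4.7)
The plan is to specialize Proposition~\ref{lem_speqs_of_bibals} to the bi-balanced Hom-functor $S = \homsetin{\mcat}{-}{-}\colon \opcat{\mcat}\times\mcat\to\vect$, which is bi-balanced in the ``connected'' sense of \eqref{eq_bibal_ambiguous_conn} because $\mcat$ is an $\acat$-$\acat$-bimodule category. By Example~\ref{ex_centerhom_is_equalizer}, the equalizer $\eq S(x,y)$ is canonically identified with $\homsetin{\modulecent{\acat}{\mcat}}{x}{y}$, and under this identification the canonical map into $S(U(x),U(y)) = \homsetin{\mcat}{U(x)}{U(y)}$ in the equalizer diagram is precisely the forgetful inclusion appearing in the lemma. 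Proposition~\ref{lem_speqs_of_bibals} then supplies, from the spherical structure on $\acat$, a retraction $r$ of this inclusion; concretely, $r$ is determined by the idempotent $h$ of \eqref{eq_idemp} restricted to its image.

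What remains is to verify that this retraction is given by the explicit formula \eqref{eq_homspace_projection}. This is an unpacking of the graphical calculus from Section~\ref{sec_balfuns}: the double-boxed $S$-coupon, when $S = \homsetin{\mcat}{-}{-}$, accepts as input a morphism $f\colon U(x)\to U(y)$, and bending the $\si{a}$-strand past the box on either side is, by definition of the balancing of the Hom-functor, composition with evaluations and coevaluations of $\acat$, producing the sandwich $\si{a}\, f\, \si{a}^*$. Tracking how these bent strands combine with the crossings through the central lines labeled by $x$ and $y$ — which, by the convention introduced after \eqref{eq_brev_cobrev_pics}, encode the half-braidings $\halfbraid{x}$ and $\halfbraid{y}$ — and comparing with the two equivalent expressions for $\brev{z}{a}$ and $\cobrev{z}{a}$ in \eqref{eq_def_brev}, shows that the resulting morphism in $\homsetin{\mcat}{U(x)}{U(y)}$ is exactly $\brev{y}{\si{a}} \circ (\si{a}\, f\, \si{a}^*) \circ \cobrev{x}{\si{a}^*}$, as required.

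The essential work — and the only real obstacle — lies in this diagrammatic translation; once the conventions of Section~\ref{sec_balfuns} are carefully matched to the standard string diagrams defining $\brev{}{}$ and $\cobrev{}{}$, the formula in \eqref{eq_homspace_projection} appears verbatim. Existence of the retraction, its retraction property $r \circ \iota = \id$, and the fact that $\iota \circ r$ is an idempotent whose image is $\homsetin{\modulecent{\acat}{\mcat}}{x}{y}$ are all immediate consequences of Proposition~\ref{lem_speqs_of_bibals} and therefore require no further argument.
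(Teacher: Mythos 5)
Your proposal is correct and follows exactly the paper's own route: the paper proves this lemma precisely by citing Proposition~\ref{lem_speqs_of_bibals} applied to the Hom-functor of $\mcat$, identified via Example~\ref{ex_centerhom_is_equalizer}, with the explicit formula \eqref{eq_homspace_projection} being the specialization of the idempotent \eqref{eq_idemp}. Your unpacking of the diagrammatic translation into $\brev{y}{\si{a}}$ and $\cobrev{x}{\si{a}^*}$ is in fact slightly more detailed than the paper, which simply calls the lemma a direct corollary.
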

This generalizes \cite[Lemma~2.2]{balskir}, and is a direct corollary of Lemma~\ref{lem_speqs_of_bibals}: The bi-balanced functor in this case is the Hom-functor in $\mcat$, as discussed in Example~\ref{ex_centerhom_is_equalizer}.

Lemma~\ref{lemma_homspace_projection} can be used to investigate the center of a \emph{traced} bimodule category.
Concretely, we find that the restriction of a bimodule trace to the center is again a trace:
\begin{Lemma}\label{lem_center_calabiyau}
	Let $\acat$ be a spherical fusion category, and let $\mcat$ be a traced $\acat$-$\acat$-bimodule category.
	The center $\modulecent{\acat}{\mcat}$ can be equipped with the structure of a Calabi--Yau category, whose trace function is defined, for an endomorphism $f \colon x \to x$ in $\modulecent{\acat}{\mcat}$, by
	\begin{equation}\label{eq_center_trace}
		\trace{\modulecent{\acat}{\mcat}}(f) := \frac{1}{\catdim{\acat}}\trace{\mcat}(f).
	\end{equation}
\end{Lemma}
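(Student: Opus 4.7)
The plan is to verify the two defining properties of a trace in a Calabi--Yau category, namely symmetry and non-degeneracy, for the proposed formula.

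Symmetry is essentially inherited from $\mcat$. The forgetful functor $U\colon \modulecent{\acat}{\mcat} \to \mcat$ is linear and preserves composition, so for $f\colon x\to y$ and $g\colon y\to x$ in $\modulecent{\acat}{\mcat}$ one has $U(f\circ g) = U(f)\circ U(g)$. The symmetry of $\trace{\mcat}$ thus transfers after dividing by $\catdim{\acat}$. All the substance of the proof is in establishing non-degeneracy.

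To establish non-degeneracy, I plan to relate the pairing on the center to the pairing on $\mcat$ via the retraction $r$ of the inclusion $\iota\colon \homsetin{\modulecent{\acat}{\mcat}}{x}{y}\hookrightarrow \homsetin{\mcat}{U(x)}{U(y)}$ furnished by Lemma~\ref{lemma_homspace_projection}. The key intermediate claim is that, for every $f\colon x\to y$ in $\modulecent{\acat}{\mcat}$ and every $h\colon U(y)\to U(x)$ in $\mcat$,
\begin{equation*}
\trace{\mcat}(\iota(f)\circ \iota(r(h))) = \trace{\mcat}(\iota(f)\circ h).
\end{equation*}
To prove this, I would first expand $\iota(r(h))$ using formula \eqref{eq_homspace_projection}, producing a sum over simples $\si{a}$ whose summands contain $\brev{x}{\si{a}}\circ(\si{a}\, h\, \si{a}^*)\circ \cobrev{y}{\si{a}^*}$. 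Cyclicity of $\trace{\mcat}$ lets me move $\iota(f)$ next to $\cobrev{y}{\si{a}^*}$, and the center property of $f$ yields the identity $\cobrev{y}{\si{a}^*}\circ \iota(f) = (\si{a}\,\iota(f)\,\si{a}^*)\circ \cobrev{x}{\si{a}^*}$ (a direct manipulation using $\halfbraid{y}_{\si{a}}\circ (\si{a}\,\iota(f)) = (\iota(f)\,\si{a})\circ \halfbraid{x}_{\si{a}}$). After this commutation, each summand becomes $\trace{\mcat}$ of a closed $\si{a}$-loop through the balancing of $x$ around the endomorphism $Y := h\circ \iota(f)$ of $U(x)$. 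This closed-loop expression is, by direct inspection, the partial trace of $(\halfbraid{x}_{\si{a}^*})^{-1}\circ (Y\,\si{a}^*)\circ \halfbraid{x}_{\si{a}^*}$ in the sense of \eqref{eq_ptr_def}. The bimodule trace axiom \eqref{eq_module_trace_cond} together with cyclicity then collapses each summand to $\pdim{\si{a}}\,\trace{\mcat}(Y)$, and the remaining sum $\sum_{\si{a}}\pdim{\si{a}}^{2} = \catdim{\acat}$ cancels the normalization $1/\catdim{\acat}$.

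Once this identity is in hand, non-degeneracy is a short deduction. Suppose $f\in \homsetin{\modulecent{\acat}{\mcat}}{x}{y}$ satisfies $\trace{\modulecent{\acat}{\mcat}}(f\circ g)=0$ for every $g\in \homsetin{\modulecent{\acat}{\mcat}}{y}{x}$. Choosing $g=r(h)$ for an arbitrary $h\in \homsetin{\mcat}{U(y)}{U(x)}$ and invoking the intermediate identity yields $\trace{\mcat}(\iota(f)\circ h)=0$ for all such $h$; the non-degeneracy of $\trace{\mcat}$ then forces $\iota(f)=0$, and hence $f=0$. The main technical obstacle is the intermediate identity itself: combining the explicit form of the projector $r$, the commutation of braided (co)evaluations with center morphisms, the partial-trace compatibility of a bimodule trace, and the squared-dimension sum $\sum_{\si{a}}\pdim{\si{a}}^2=\catdim{\acat}$ is where the spherical structure on $\acat$ and the traced structure on $\mcat$ are jointly used.
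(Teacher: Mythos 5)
Your proposal is correct and follows essentially the same route as the paper: reduce non-degeneracy on $\modulecent{\acat}{\mcat}$ to non-degeneracy on $\mcat$ via the retraction $r$ of Lemma~\ref{lemma_homspace_projection}, using that composing with $r(h)$ does not change the trace. The paper merely packages your intermediate identity as two separate facts (the relation $f\circ r(g)=r(f\circ g)$ and the trace-invariance of the idempotent, Lemma~\ref{lem_trace_of_retraction}, whose proof is exactly your projector-expansion/cyclicity/partial-trace computation), and invokes surjectivity of $r$ where you simply specialize $g=r(h)$.
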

Of course, the normalization factor $\frac{1}{\catdim{\acat}}$ in \eqref{eq_center_trace} is a convention.
The reason for this particular choice will be made clear by Remark~\ref{rem_unitor_preserves_trace}.
Before we prove Lemma~\ref{lem_center_calabiyau}, we need some intermediate results.
First, observe that the idempotent $h\colon \homsetin{\mcat}{U(x)}{U(y)} \to \homsetin{\mcat}{U(x)}{U(y)}$ leaves the trace invariant.

\begin{Lemma}\label{lem_trace_of_retraction}
	Let $x \in \modulecent{\acat}{\mcat}$ and let $f$ be an endomorphism of $U(x)$ in $\mcat$.
	Then
	\begin{equation*}
		\trace{\mcat}(f) = \trace{\mcat}(h(f)).
	\end{equation*}
\end{Lemma}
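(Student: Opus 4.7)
The plan is to expand the defining formula \eqref{eq_homspace_projection} of $h(f)$ and, by linearity of $\trace{\mcat}$, reduce the claim to the per-simple identity
\begin{equation*}
\trace{\mcat}\bigl(\brev{x}{\si{a}} \circ (\si{a} f \si{a}^*) \circ \cobrev{x}{\si{a}^*}\bigr) = \pdim{\si{a}} \trace{\mcat}(f)
\end{equation*}
for each simple $\si{a} \in \acat$. Summing this identity against the prefactor $\pdim{\si{a}}/\catdim{\acat}$ then yields $\trace{\mcat}(f)$ because $\catdim{\acat} = \sum_{\si{a}} \pdim{\si{a}}^2$.

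For the per-simple identity, I insert the first expressions for $\brev{x}{\si{a}}$ and $\cobrev{x}{\si{a}^*}$ from \eqref{eq_def_brev} (using pivotality $\si{a}^{**} = \si{a}$) and observe that the $\si{a}^*$-strand appears as a pure bystander across the three inner factors. The composition therefore collapses to
\begin{equation*}
\brev{x}{\si{a}} \circ (\si{a} f \si{a}^*) \circ \cobrev{x}{\si{a}^*} = (U(x) \ev{\si{a}}) \circ (F \si{a}^*) \circ (U(x) \coev{\si{a}^*}),
\end{equation*}
with $F := \halfbraid{x}_{\si{a}} \circ (\si{a} f) \circ (\halfbraid{x}_{\si{a}})^{-1}$ an endomorphism of $U(x) \si{a}$. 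This right-hand side is precisely the partial trace $\mathrm{ptr}(F)$ of \eqref{eq_ptr_def} taken with $a = \one$ and $b = \si{a}$, so the bimodule-trace condition \eqref{eq_module_trace_cond} yields
\begin{equation*}
\trace{\mcat}\bigl(\brev{x}{\si{a}} \circ (\si{a} f \si{a}^*) \circ \cobrev{x}{\si{a}^*}\bigr) = \trace{\mcat}(F).
\end{equation*}

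Now cyclicity of $\trace{\mcat}$ cancels the pair $\halfbraid{x}_{\si{a}}$, $(\halfbraid{x}_{\si{a}})^{-1}$ in $F$, so $\trace{\mcat}(F) = \trace{\mcat}(\si{a} f)$. A second invocation of \eqref{eq_module_trace_cond}, this time with $a = \si{a}$ and $b = \one$, rewrites $\trace{\mcat}(\si{a} f)$ as $\trace{\mcat}(\mathrm{ptr}(\si{a} f))$, and the remaining partial trace factorizes over the $\si{a}$- and $U(x)$-strands as $(\ev{\si{a}^*} \circ \coev{\si{a}}) \otimes f$ by the interchange law. Sphericity of $\acat$ identifies $\ev{\si{a}^*} \circ \coev{\si{a}} = \pdim{\si{a}}$, completing the per-simple identity. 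The one mildly delicate step is the pattern-matching in the second paragraph that recognizes the full expression as an honest partial trace once the half-braidings have been packaged into $F$; after that, the argument reduces to cyclicity of $\trace{\mcat}$ together with two applications of the bimodule-trace property \eqref{eq_module_trace_cond}.
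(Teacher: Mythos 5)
Your proof is correct and is essentially the paper's own argument: a direct expansion of $h(f)$ via \eqref{eq_homspace_projection}, followed by the symmetry of $\trace{\mcat}$ to cancel the half-braidings and the bimodule-trace compatibility \eqref{eq_module_trace_cond} to collapse the $\si{a}$-loops into dimension factors, with the final sum using $\catdim{\acat}=\sum_{\si{a}}\pdim{\si{a}}^2$. The only difference is presentational — you organize it as a per-simple-object identity in algebraic notation (applying \eqref{eq_module_trace_cond} twice), whereas the paper does the same calculation in one string-diagram chain.
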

\begin{proof}
	The proof is a direct calculation
	\begin{align*}
			\trace{\mcat}(h(f)) &= \frac{1}{\catdim{\acat}} \sum_\si{a} \pdim{\si{a}} \trace{\mcat}\Left({4}\pica{1}{pic_traceofret_1}\Right){4} = \frac{1}{\catdim{\acat}} \sum_\si{a} \pdim{\si{a}} \trace{\mcat}\Left({4}\pica{1}{pic_traceofret_2}\Right){4}\\
			&= \frac{1}{\catdim{\acat}} \sum_\si{a} \pdim{\si{a}} \trace{\mcat}\Left({4}\pica{1}{pic_traceofret_3}\Right){4} = \trace{\mcat}(f).
	\end{align*}
	The first equality is \eqref{eq_homspace_projection}, the second uses the symmetry of the trace, and the third uses the compatibility \eqref{eq_module_trace_cond} of the bimodule trace on $\mcat$ with the pivotal structure on $\acat$.
\end{proof}

It is an even more straightforward observation that, for objects $x,y,z \in \modulecent{\acat}{\mcat}$, a morphism~${f\colon y \to z}$ in $\modulecent{\acat}{\mcat}$, and a morphism $g\colon U(x) \to U(y)$ in $\mcat$, the equality
\begin{equation}\label{eq_retraction_sesquimult}
	f\circ r(g) = r(f\circ g)
\end{equation}
holds.
Now we are in a position to prove the lemma.
\begin{proof}[Proof of Lemma~\ref{lem_center_calabiyau}]
	It is clear that the candidate $\trace{\modulecent{\acat}{\mcat}}$ for the trace function defined in~\eqref{eq_center_trace} inherits the symmetry property from the trace $\trace{\mcat}$ on $\mcat$.
	It remains to check whether it is also non-degenerate.
	To this end, let $f\colon y\to z$ be a morphism as before.
	We need to check that if for every $g \in \homsetin{\modulecent{\acat}{\mcat}}{z}{y}$, we have $\trace{\modulecent{\acat}{\mcat}}(f \circ \tilde{g}) = 0$, then $f=0$.
	As the normalization factor~$\frac{1}{\catdim{\acat}}$ in the definition of $\trace{\modulecent{\acat}{\mcat}}$ is not relevant for the validity of the statement, the condition $\forall \tilde{g}\in \homset{z}{y}, \trace{\modulecent{\acat}{\mcat}}(f \circ \tilde{g}) = 0$ is equivalent to
	\begin{equation}\label{eq_centercalabiyau_calc_0}
		\forall \tilde{g}\in \homset{z}{y}, \trace{\mcat}(f \circ \tilde{g}) = 0.
	\end{equation}
	Since $r$ is surjective, there is some $g \in \homsetin{\mcat}{U(z)}{U(y)}$ such that $\tilde{g} = r(g)$.
	Hence, \eqref{eq_centercalabiyau_calc_0} is equivalent to $\forall g\in \homset{U(z)}{U(y)}, \trace{\mcat}(f \circ r(g)) = 0$.
	Moreover, from \eqref{eq_retraction_sesquimult} and Lemma~\ref{lem_trace_of_retraction}, we know that
	\begin{equation*}
		\trace{\mcat}(f \circ r(g)) = \trace{\mcat}(r(f \circ g)) = \trace{\mcat}(f \circ g).
	\end{equation*}
	Thus, the statement we want to prove is that
	$\forall g\in \homset{U(z)}{U(y)}, \trace{\mcat}(f \circ g) = 0$ implies $f=0$.
	This is true by the non-degeneracy of the trace on $\mcat$.
\end{proof}

\subsection{Shifting actions under (co-)ends}
In this section, we are concerned with a structure which looks like, but is not quite, a balancing.
Let $\mcat$ be a left module category over a pivotal fusion category $\acat$, and let $S \colon \opcat{\mcat} \times \mcat \to \xcat$ be a~bilinear functor.
We define isomorphisms
\begin{equation*}
	\coendbal_a \colon\ \bigoplus_\si{m} S(\si{m},a\si{m}) \to \bigoplus_\si{m} S(a^*\si{m},\si{m})
\end{equation*}
for $a \in \acat$ in the following way.
Pick an arbitrary (auxiliary) Calabi--Yau structure on $\mcat$.
Then define $\coendbal_a$ in terms of their matrix elements
\begin{align}
		\cte{(\coendbal_a)}{\si{n}}{\si{k}}:={}& \pdim{\si{n}} S(\basisel{a^*\si{k}}{\si{n}}, (\ev{a} \si{k}) \circ (a \basisel{\si{n}}{a^*\si{k}})) \colon\  S(\si{n}, a\si{n}) \to S(a^* \si{k} , \si{k})\nonumber\\
={}& \pdim{\si{n}} \pica{1}{pic_betabal}.\label{eq_beta_components}
\end{align}
Although the definition \eqref{eq_beta_components} seems ad hoc, the expression arises naturally when considering the term $\bigoplus_\si{m} S(\si{m},a\si{m})$ as an end, as is done in \cite[Section~2.11]{jf-thesis}.
It is then also obvious that~$\coendbal_a$ does not depend on the auxiliary Calabi--Yau structure.

The morphisms $\coendbal_{a}$ are of interest to us for some particular choices of $S$, which we now discuss.
\begin{itemize}\itemsep=0pt
	\item For $S = - \deligne -$, the family of isomorphisms $\coendbal_{a}$ forms a balancing for the object $\bigoplus_\si{m} \opcat{\si{m}} \deligne \si{m}$ of $\opcat{\mcat} \deligne \mcat$.
	Their components are given by
	\begin{align}
			\cte{(\coendbal_a)}{\si{n}}{\si{k}} &= \pdim{\si{n}} \basisel{a^*\si{k}}{\si{n}} \deligne ((\ev{a} \si{k}) \circ (a  \basisel{\si{n}}{a^*\si{k}})).\label{eq_ewid_balancings_components}
	\end{align}
	
	\item Let $\acat$ be a spherical fusion category, let $\bimod{\acat}{\ncat}{\acat}$ be an $\acat$-$\acat$-bimodule category, fix $m\in \ncat$ and set $S_m := -  m  (-)^*$ as a functor $\lmod{\acat}{\acat} \times \rmod{\opcat{\acat}}{\acat} \to \bimod{\acat}{\ncat}{\acat}$.
	The morphisms $\coendbal_a$ associated with~$S_m$ again assemble into a balancing.
	We can thus define a functor $\leftind\colon \ncat \to \modulecent{\acat}{\ncat}$, called the \emph{induction functor} via
	\begin{equation}\label{eq_induction_functors}
		\leftind(m) := \biggl(\bigoplus_\si{a} \si{a} m \si{a}^*, \coendbal\biggr) .
	\end{equation}
	The components of the balancings are given by
	\begin{equation*}
		\cte{(\coendbal_a)}{\si{b}}{\si{c}} = \pdim{\si{b}}\basisel{a^*\si{c}}{\si{b}} m ((\ev{a} \si{c}) \circ (a  \basisel{\si{b}}{a^*\si{c}})).
	\end{equation*}
	The induction functor is left and right adjoint to the forgetful functor $U \colon \modulecent{\acat}{\ncat} \to \ncat$
	\begin{equation} \label{eq_induction_adjunction}
		\leftind \dashv U \dashv \rightind.
	\end{equation}
	
	The adjunctions \eqref{eq_induction_adjunction} are witnessed by isomorphisms
	\begin{equation} \label{eq_induction_adjunction_r_iso}
		\indadjr{m}{x} \colon\ \homsetin{\ncat}{m}{U(x)} \to \homsetin{\modulecent{\acat}{\ncat}}{\leftind(m)}{x}
	\end{equation}
	and
	\begin{equation} \label{eq_induction_adjunction_l_iso}
		\indadjl{m}{x} \colon\ \homsetin{\ncat}{U(x)}{m} \to \homsetin{\modulecent{\acat}{\ncat}}{x}{\rightind(m)},
	\end{equation}
	natural in $m \in \ncat$ and $x \in \modulecent{\acat}{\ncat}$.
	For morphisms $f \colon m \to U(x)$, $g\colon U(x) \to m$, $f' \colon \leftind(m) \to x$ and $g' \colon x \to \rightind(m)$, these adjunction isomorphisms are given explicitly by the following expressions
	\begin{align}
		\indadjr{m}{x} (f) = \bigoplus_\si{a} \pdim{\si{a}} \brev{x}{\si{a}} \circ (\si{a}  f  \si{a}^*) &\qquad \text{and} \qquad
		(\indadjr{m}{x})^{-1}\big(f'\big) = f' \circ \coincl{\one},\nonumber
		\\
		\indadjl{m}{x} (g) = \bigoplus_\si{a} (\si{a}  g  \si{a}^*) \circ \cobrev{x}{\si{a}^*} &\qquad \text{and} \qquad
		\bigl(\indadjl{m}{x}\bigr)^{-1}\big(g'\big) = \projend{\one} \circ g'.\label{eq_def_indadjr}
	\end{align}
\end{itemize}

\begin{Remark}\label{rem_shifting_actions_are_balancings}
	The morphisms $\coendbal_a$ can be understood as components of a balancing of the functor $\bigoplus_\si{m} S(\si{m} - , - \si{m})\colon \opcat{\acat} \times \acat \to \xcat$.
\end{Remark}

\subsection{Relative Deligne products} \label{sec_tricat}
Let $\bcat$ be a spherical fusion category, and let $\rmod{\mcat}{\bcat}$ be a right and $\lmod{\bcat}{\ncat}$ be a left $\bcat$-module category.
We make use of the \emph{relative Deligne product} of $\mcat$ and $\ncat$ \cite{dss-balanceddeligne}.
For us, it is convenient to realize it as the center of the $\bcat$-$\bcat$-bimodule category $\rmod{\mcat}{\bcat} \deligne \lmod{\bcat}{\ncat}$
\begin{equation}\label{eq_def_reldel}
	\mcat \reldelov{\bcat} \ncat := \modulecent{\bcat}{\rmod{\mcat}{\bcat} \deligne \lmod{\bcat}{\ncat}}.
\end{equation}
This makes it obvious that there is a forgetful functor into the ordinary Deligne product
\[
U\colon\ \mcat \reldelov{\bcat} \ncat \to \mcat \deligne \ncat,
\]
 which has the left and right adjoint $\leftind$ from \eqref{eq_induction_adjunction}.
In notation, we sometimes omit the spherical fusion category $\bcat$ when there is no ambiguity, simply writing
\smash{$\mcat \reldel \ncat := \mcat \reldelov{\bcat} \ncat$}.
By Remark~\ref{rem_modulecent_vect_trivial}, the relative Deligne product specializes to the ordinary Deligne product in the case that $\bcat = \vect$, i.e.,
\smash{$
\mcat \reldelov{\vect} \ncat \cong \mcat \deligne \ncat$}.

Let now $\acat$ and $\ccat$ be additional spherical fusion categories, and suppose that $\bimod{\acat}{\mcat}{\bcat}$ and $\bimod{\bcat}{\ncat}{\ccat}$ are bimodule categories.
The category $\mcat \reldel \ncat$ then still caries a left $\acat$-action inherited from $\mcat$ and a right $\ccat$-action inherited from $\ncat$, and is thus an $\acat$-$\ccat$-bimodule category.
If $\acat = \ccat$, there is a canonical equivalence
\smash{$\modulecent{\acat}{\mcat \reldelov{\bcat} \ncat} \cong \modulecent{\bcat}{\ncat \reldelov{\acat}\mcat}$}.
We also use ``dangling product'' notation for this category, as in
\begin{equation}\label{eq_dangling_product}
	\mcat \reldel \ncat \reldel := \modulecent{\acat}{\mcat \reldelov{\bcat} \ncat}.
\end{equation}
As a shorthand for the induction functor $\leftind$ from \eqref{eq_induction_adjunction}, we write for $m\in \mcat$ and $n \in \ncat$
\begin{equation}\label{eq_induction_reldel_objects}
	m \rexreldel n := \leftind(m\deligne n).
\end{equation}
The dangling product notation from \eqref{eq_dangling_product} can be used here as well: We may write $m \rexreldel n \rexreldel$ for an object in $\mcat \reldel \ncat \reldel$.

The relative Deligne product is associative (in the sense that it comes with an appropriately coherent set of equivalences witnessing associativity) and unital with respect to the regular bimodule category $\bimod{\acat}{\acat}{\acat}$.
We slightly abuse notation by treating the relative Deligne product as if it were strictly associative: we do not pay attention to bracketing and make no explicit use of associators.
Unitality requires an equivalence of $\acat$-$\bcat$-bimodule categories
\begin{equation}\label{eq_reldel_unitality}
	\lmod{\acat}{\acat} \reldel \rmod{\mcat}{\bcat} \cong \bimod{\acat}{\mcat}{\bcat}
\end{equation}
for every $\acat$-$\bcat$-bimodule category $\mcat$.
This equivalence is given, on objects of the form ${a \lexreldel m \in \acat \reldel \mcat}$, by
\begin{equation}\label{eq_reldel_unitality_explicit}
	a \lexreldel m \mapsto am \in \mcat, \qquad \text{with pseudo-inverse} \qquad n \mapsto \one \lexreldel n \in \acat \reldel \mcat,
\end{equation}
for $n \in \mcat$.

A construction of \emph{a} trace on $\mcat \reldel \ncat$ is provided by Lemma~\ref{lem_center_calabiyau}, but it is not immediately clear that this trace respects the $\acat$-$\ccat$-bimodule structure.
On the other hand, a \emph{bimodule} trace for $\mcat \reldel \ncat$ is constructed in \cite[Proposition~4.10.1]{schaumann-bimod}.
These traces are indeed equal, making calculations involving the trace on the relative Deligne product easy to perform:
\begin{Lemma}[{\cite[Lemma~2.28]{jf-thesis}}]\label{lem_trace_on_reldel}
	The bimodule trace on \smash{$\mcat \reldelov{\bcat} \ncat$} defined in {\rm\cite[\emph{Proposition}~4.10.1]{schaumann-bimod}} is the trace for the center of a traced bimodule category defined in Lemma~{\rm\ref{lem_center_calabiyau}}.
	This implies the following property: For endomorphisms $f \colon m\to m$ in $\mcat$ and $g\colon n\to n$ in $\ncat$,
	\begin{equation*}
		\trace{\mcat\reldel\ncat} (f \rexreldel g) = \trace{\mcat}(f) \trace{\ncat}(g).
	\end{equation*}
	In particular,
$
		\pdim{m \rexreldel n} = \pdim{m}  \pdim{n}$.
\end{Lemma}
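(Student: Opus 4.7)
The plan is to compute the left-hand side directly from the trace formula for centers of traced bimodule categories given in Lemma \ref{lem_center_calabiyau}, and then to identify the result with Schaumann's trace via uniqueness of bimodule traces.

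First I would unwind the induction functor \eqref{eq_induction_functors}. Since $\bcat$ acts on $\mcat \deligne \ncat$ by $b \cdot (m \deligne n) = bm \deligne n$ and $(m \deligne n) \cdot b = m \deligne nb$, the underlying object $U(m \rexreldel n) = U(\leftind(m \deligne n))$ is $\bigoplus_\si{b} (\si{b} m) \deligne (n \si{b}^*)$, and $U(f \rexreldel g)$ is the diagonal $\bigoplus_\si{b} (\si{b} f) \deligne (g \si{b}^*)$. Applying $\trace{\mcat \reldel \ncat} = \frac{1}{\catdim{\bcat}} \trace{\mcat \deligne \ncat} \circ U$ from Lemma \ref{lem_center_calabiyau} together with the multiplicativity of the Deligne-product trace from Section \ref{sec_calabiyau} yields
\[
\trace{\mcat \reldel \ncat}(f \rexreldel g) = \frac{1}{\catdim{\bcat}} \sum_\si{b} \trace{\mcat}(\si{b} f)  \trace{\ncat}(g \si{b}^*).
\]

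The next step is to apply the bimodule-trace compatibility \eqref{eq_module_trace_cond} to each summand. Viewing $\si{b} f \colon \si{b} m \to \si{b} m$ as an endomorphism with $a = \si{b}$, trivial right action, the partial trace loops the $\si{b}$-strand back via $\ev{\si{b}^*}$ and $\coev{\si{b}}$; since $f$ slides past these by naturality and the remaining cup-cap evaluates to $\pdim{\si{b}}$, one obtains $\trace{\mcat}(\si{b} f) = \pdim{\si{b}} \trace{\mcat}(f)$. Symmetrically $\trace{\ncat}(g \si{b}^*) = \pdim{\si{b}^*} \trace{\ncat}(g) = \pdim{\si{b}} \trace{\ncat}(g)$ by pivotality. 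The sum $\sum_\si{b} \pdim{\si{b}}^2 = \catdim{\bcat}$ then cancels the normalization prefactor, leaving $\trace{\mcat}(f) \trace{\ncat}(g)$. The dimension identity $\pdim{m \rexreldel n} = \pdim{m} \pdim{n}$ is the specialization $f = \id_m$, $g = \id_n$.

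For the first assertion, I would invoke Remark \ref{rem_modtrace_unique_exist}: on each indecomposable summand of the $\acat$-$\ccat$-bimodule category $\mcat \reldel \ncat$, a bimodule trace is uniquely determined up to a non-zero scalar. Both Schaumann's trace from \cite[Proposition 4.10.1]{schaumann-bimod} and the Lemma \ref{lem_center_calabiyau} trace are bimodule traces realising the multiplicative normalisation $\pdim{m \rexreldel n} = \pdim{m} \pdim{n}$ (on the one hand by Schaumann's construction, on the other by the calculation above). Since $\leftind$ hits every indecomposable summand of $\mcat \reldel \ncat$, the scalars agree and the two traces coincide. The step I expect to be the sticking point is verifying that the Lemma \ref{lem_center_calabiyau} trace really is a \emph{bimodule} trace for the residual $\acat$-$\ccat$-action—not merely a Calabi--Yau trace—so that uniqueness applies: the partial-trace identity \eqref{eq_module_trace_cond} for $a \in \acat$ and $c \in \ccat$ reduces to the same cup-cap manipulation as above performed inside $\mcat$ and $\ncat$ independently, but one must check that the balancings $\coendbal$ defining the induction interact cleanly with these outer actions.
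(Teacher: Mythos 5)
Your direct computation of the multiplicativity formula for the trace of Lemma~\ref{lem_center_calabiyau} is essentially correct, up to a harmless slip about sides: $\mcat$ is a \emph{right} and $\ncat$ a \emph{left} $\bcat$-module, so $U(m\rexreldel n)=\bigoplus_{\si{b}} m\si{b}^*\deligne \si{b}n$ rather than $\bigoplus_{\si{b}}\si{b}m\deligne n\si{b}^*$; the partial-trace argument via \eqref{eq_module_trace_cond} then gives $\frac{1}{\catdim{\bcat}}\sum_{\si{b}}\pdim{\si{b}}^2\,\trace{\mcat}(f)\,\trace{\ncat}(g)=\trace{\mcat}(f)\,\trace{\ncat}(g)$ exactly as you say. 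Note that the paper itself gives no proof of this lemma (it is quoted from \cite[Lemma~2.28]{jf-thesis}), and that in the lemma the multiplicativity is presented as a \emph{consequence} of the identification with Schaumann's trace, whereas you prove it directly for the center trace; that inversion is fine only if you can then actually establish the identification, which is the first and main assertion.

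That identification step is where your argument has a genuine gap. First, the required input on Schaumann's side --- that the trace of \cite[Proposition~4.10.1]{schaumann-bimod} satisfies $\pdim{m\rexreldel n}=\pdim{m}\pdim{n}$ (or its endomorphism-level analogue) --- is asserted ``by Schaumann's construction'' but never derived; since comparing the two constructions is precisely the content of the assertion, this cannot simply be presupposed. Second, even granting it, the uniqueness argument does not close: Remark~\ref{rem_modtrace_unique_exist} gives uniqueness only up to a scalar $\lambda_i$ on each \emph{indecomposable} summand of the $\acat$-$\ccat$-bimodule category $\mcat\reldelov{\bcat}\ncat$, and an induced object $\leftind(\si{m}\deligne\si{n})$ in general spreads over several such summands, so its dimension is a sum of per-summand contributions. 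Matching dimensions (or traces of morphisms of the special form $f\rexreldel g$) therefore only yields linear relations $\sum_i(\lambda_i-1)c_i=0$; over an arbitrary algebraically closed field of characteristic $0$ there is no positivity to separate the $\lambda_i$, and since the endomorphism algebra of $\leftind(\si{m}\deligne\si{n})$ is strictly larger than the image of $\leftind$ on endomorphisms, you cannot reach the idempotents that isolate a single simple, hence a single summand. One needs either agreement on the full endomorphism algebras of induced objects or a direct unpacking of Schaumann's definition (presumably what the cited thesis proof does). Finally, for the uniqueness argument to apply at all, the Lemma~\ref{lem_center_calabiyau} trace must be shown to be an $\acat$-$\ccat$-\emph{bimodule} trace; you correctly flag this but leave it open --- it does hold, because $U$ intertwines the outer actions and the condition \eqref{eq_module_trace_cond} for $\trace{\mcat\deligne\ncat}$ descends along $U$, but that verification is part of the proof, not an afterthought.
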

\begin{Remark}\label{rem_unitor_preserves_trace}
	The unitor equivalence
$\acat \reldel \acat \cong \acat
$
	from \eqref{eq_reldel_unitality} for a spherical fusion category~$\acat$ is an equivalences between two traced bimodule categories. Lemma~\ref{lem_trace_on_reldel} implies that this equivalence preserves the trace.
	This compatibility is the reason for the normalization factor $\frac{1}{\catdim{\acat}}$ in the definition \eqref{eq_center_trace} of the Calabi--Yau structure for the center of a traced bimodule category.
\end{Remark}

\begin{Remark}[Sweedler notation for forgetful functors.]\label{rem_sweedler}
	Due to its definition as a center \eqref{eq_def_reldel}, the relative Deligne product \smash{$\mcat \reldelov{\bcat} \ncat$} comes with a forgetful functor \smash{$U \colon \mcat \reldelov{\bcat} \ncat \to \mcat \deligne \ncat$}.
	We will often use a form of Sweedler notation for this functor, writing
	\begin{equation}\label{eq_reldel_forget_sweed}
		U(x) = \sweed{x}{\mcat} \deligne \sweed{x}{\ncat}
	\end{equation}
	for $x \in \mcat \reldel \ncat$.
\end{Remark}

\subsection{Silent objects}\label{sec_eilenwatts}
\label{sec_silentobs}
\label{sec_genyon}
\label{sec_gencomp}
Let $\xcat$ be a linear category.
In accordance with \cite[Section~5.19]{fss-statesum}, we call the object
\begin{align}\label{eq_def_silentobjs}
		\ewid{\xcat} &:= \bigoplus_\si{x} \opcat{\si{x}} \deligne \si{x} \in \opcat{\xcat} \deligne \xcat
\end{align}
the \emph{silent} object of the category $\opcat{\xcat} \deligne \xcat$.
We are interested in silent objects for two reasons.

The first is the natural isomorphism
\begin{equation*}
	\soc \colon\ \homsetin{\xcat}{-}{-} \cong \homsetin{\opcat{\xcat}\deligne \xcat}{-\deligne-}{\ewcoid{\xcat}}
\end{equation*}
between Hom-functors, which we will use frequently.
Explicitly, $\soc$ is given on a morphism $f\colon x\to x'$ and in components for simple $\si{y}\in \xcat$ by
\begin{equation}\label{eq_sil}
	\tc{\soc(f)}{\si{y}} = \pdim{\si{y}}  \basisel{\si{y}}{x'} \deligne (\basisel{x'}{\si{y}} \circ f)
\end{equation}
with inverse
\begin{gather*}
	\soc^{-1}(g) = \sum_{\si{y}} \sweed{(\tc{g}{\si{y}})}{\opcat{\xcat}} \circ \sweed{(\tc{g}{\si{y}})}{\xcat}
\end{gather*}
with $g = \sweed{g}{\opcat{\xcat}} \otimes \sweed{g}{\xcat}  \colon \opcat{x'} \deligne x \to \bigoplus_\si{y} \opcat{\si{y}} \deligne \si{y}$.

The second reason has to do with the \emph{Eilenberg--Watts equivalence}
\begin{equation*}
	\ew \colon\  \funcat{\xcat}{\ycat} \cong \opcat{\xcat} \deligne \ycat
\end{equation*}
for linear categories $\xcat$ and $\ycat$, see \cite{fss-ew}.
The equivalence is defined by
\begin{equation}\label{eq_ew_exp}
	\ew (F) := \bigoplus_\si{x} \opcat{\si{x}} \deligne F(\si{x}),
\end{equation}
and comparison with \eqref{eq_def_silentobjs} reveals that $\ewid{\xcat} = \ew (\id_\xcat)$.

The silent object has more structure if the category $\xcat$ is instead a module category $\lmod{\acat}{\mcat}$.
In this case, we saw in \eqref{eq_ewid_balancings_components} that the object $\ewid{\mcat}$ of the $\acat$-$\acat$-bimodule category $\opcat{\mcat} \deligne\mcat$ has a~balancing $\beta$.
In a slight abuse of terminology,
\begin{equation}\label{eq_def_zewid}
	\zewid{\mcat} := (\ewid{\mcat}, \beta) \in Z_\acat\bigl(\opcat{\mcat} \deligne\mcat\bigr) = \opcat{\mcat} \reldel \mcat
\end{equation}
will also be called a \emph{silent} object.
A direct consequence of \eqref{eq_def_zewid} is the relation $U(\zewid{\mcat}) = \ewid{\mcat}$ between the two notions of silent objects.

There is also a \emph{module Eilenberg--Watts equivalence}
\begin{gather}\label{eq_mew}
	\mew \colon\ \lmodfun{\acat}{\mcat}{\ncat} \cong \opcat{\mcat} \reldel \ncat
\end{gather}
for left $\acat$-module categories $\mcat$ and $\ncat$.
The equivalence \eqref{eq_mew} is constructed from \eqref{eq_ew_exp} by noticing that the module constraint of a module functor corresponds to a balancing of the object in the Deligne product.

We conclude this section with a remark on the dimensions of silent objects.
\begin{Remark}\label{rem_discrepancies_in_dimensions}
	Let $\xcat$ be a Calabi--Yau category.
	It is easy to see that
$
		\pdim{\ewid{\xcat}} = \catdim{\xcat}$.
	If $\mcat$ is a~traced left module category over a spherical fusion category $\acat$, then by Lemma~\ref{lem_trace_on_reldel},
$
		\pdim{\zewid{\mcat}} = \frac{\catdim{\mcat}}{\catdim{\acat}}$.
	If $\mcat$ is irreducible, then $\mathrm{Fun}_\acat(\mcat,\mcat)$ is a spherical fusion category with respect to a canonical pivotal structure that does not depend on the choice of trace on $\mcat$ \cite[Proposition~5.10]{schaumann-modtraces}.
	Therefore, the monoidal unit $\id_\mcat$ has pivotal dimension 1.
	This shows that in general, the module Eilenberg--Watts equivalences \eqref{eq_mew} do not preserve traces.
	At several points, categorical dimension factors will be present in our equations, whose appearance is rooted in this fact.\looseness=1
\end{Remark}

\section{Extruded graphs and their evaluation} \label{sec_construction}

\subsection{Defect manifolds and extruded graphs}\label{sec_extgraphs}
The evaluation procedure to be constructed in Section~\ref{sec_eval} is defined for surfaces endowed with a~specific type of labeled graphs which we call extruded graphs.
By a~surface, we always mean a~smooth, oriented, compact 2-manifold $\sigsurf$, possibly with boundary.

We now introduce additional structure on $\sigsurf$: in a first step, we allow for a finite number of embedded disks in its interior $\mathrm{Int}(\sigsurf)$ and a finite number of embedded intervals on its boundary~$\partial \sigsurf$. The embedded disks and intervals are called \emph{nodes}. The surface pictured in Figure~\ref{pic_nodesurf} has three nodes: two disks, and one interval.
\begin{figure}[ht]\centering
\includegraphics[scale=1.5]{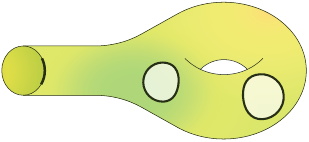}
\caption{}\label{pic_nodesurf}
\end{figure}

An unlabeled \emph{defect surface} has even more structure: it is endowed with finitely many embedded disjoint oriented intervals, called \emph{defect lines}, that must meet the following criteria:
\begin{itemize}\itemsep=0pt
	\item The end points of each defect line must lie on the boundary of a disk-shaped node or in the interior of an interval node.
	\item The interior of the defect lines must be disjoint from the disk and interval nodes.
	\item Each disk node must contain at least one end-point of a defect line.
	\item The boundary $\partial \sigsurf$ of $\sigsurf$ is covered entirely by interval nodes and defect lines.
\end{itemize}
The connected components of the complement of defect lines and disk and interval nodes in $\sigma$ are called \emph{domain}s.
In addition to these requirements, we only allow \emph{fine} unlabeled defect surfaces, which means that all domains are disks (see Figure~\ref{pic_unl_defsurf}).
\begin{figure}[ht]\centering
\includegraphics[scale=1.5]{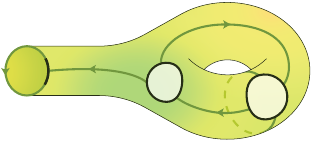}

\caption{}\label{pic_unl_defsurf}
\end{figure}

We introduce the following labeling of defect surfaces by three labels of algebraic data:
\begin{enumerate}\setlength{\leftskip}{0.36cm}\itemsep=0pt
	\item[Level 1] To each of the (two-dimensional) domains, we assign a spherical fusion category $\acat, \bcat, \dots$.
	\item[Level 2] To each defect line adjacent to a domain labeled by~$\acat$ on the left and a domain labeled by~$\bcat$ on the right, we assign a traced $\acat$-$\bcat$-bimodule category $\mcat$.
	In case the defect line lies on the boundary of the surface, there is only one adjacent domain, and the algebraic label is a~one-sided (left or right) module category, depending on the orientation of the defect line.
\end{enumerate}
We illustrate this labeling in the example (Figure~\ref{pic_defsurf}), which contains only one domain, labeled by~$\acat$.
In this case, all bimodule categories are thus $\acat$-$\acat$-bimodule categories, with the exception of $\mcat_1$, which labels a defect line on the boundary and is thus merely a traced right $\acat$-module category.
\begin{figure}[ht]\centering
\includegraphics[scale=1.1]{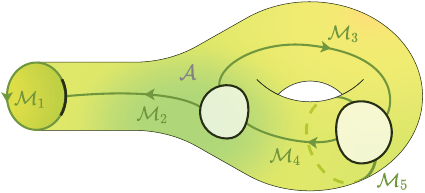}
\caption{}\label{pic_defsurf}
\end{figure}

The complete labeling of a defect surface as in Figure~\ref{pic_extgraph} requires one more layer of algebraic data.
\begin{figure}[ht]\centering
\includegraphics[scale=1.1]{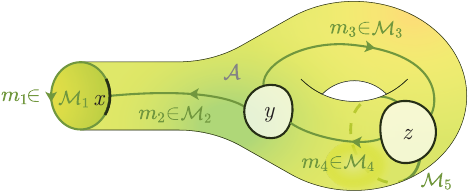}
\caption{}\label{pic_extgraph}
\end{figure}
\begin{enumerate}\setlength{\leftskip}{0.36cm}\itemsep=0pt
	\item[Level 3] To each defect line labeled by a bimodule category $\mcat$, we assign in addition an object~${m \in \mcat}$.
		For each node $L$, we describe a category $\raycat(L)$, and then assign an object $x \in \raycat(L)$ to the node.
	The category $\raycat(L)$ is obtained as follows.
	A node of a defect surface with algebraic labels introduced in the levels 1 and 2 inherits the labels illustrated in the following figure:
	\begin{equation*}
		\pica{1}{prelims_labeled_def1mf}.
	\end{equation*}
	Here, the orientation of the nodes' boundaries is defined using the orientation of the underlying surface, and the signs indicate whether an adjacent defect line is oriented towards or away from the node.
	The orientation endows the set of end points of defect lines on a node with a (cyclic or linear) order, and we thus obtain a (cyclically or linearly) composable string $(\mcat_1 , \mcat_2, \dots)$ of bimodule categories which label the adjacent defect lines.
	If the sign of the end point is negative, we use the opposite bimodule category $\opcat{\mcat_i}$ instead.
	The \emph{ray category} of $L$ is then the relative Deligne product
$\raycat (L) := \mcat_1 \reldel \mcat_2 \reldel \cdots$.
\end{enumerate}
\begin{Definition}\label{def_extgraph}
	A defect surface with all three layers of labels is called an \emph{extruded graph}.
\end{Definition}

\begin{Remark}
	We briefly discuss some differences to \cite{fss-statesum}.
	Here, all manifolds are oriented as opposed to 2-framed.
	To compensate this loss of topological structure, we need additional algebraic structure:
	The tensor categories considered in	\cite{fss-statesum} are only rigid, whereas we here require them to be equipped with a spherical structure.
	We also use a spherical structure on the bimodule categories.
	Furthermore, all categories we consider are semisimple: by \cite[Appendix]{bdspv-modcats-cobreps},
	we cannot expect to construct a 3-dimensional state-sum model with defects from non-semisimple data, which is our motivation for considering extruded graphs.
	We do not discuss the question whether a similar evaluation can be defined for extruded graphs
	with non-semisimple labels.
	
	Another minor difference is that we do not admit circular defect lines: all defect lines must start and end at a node.
	Taking these points into account, our labeled defect surfaces corresponds to labeled defect
	surfaces in the terminology of \cite{fss-statesum}.
\end{Remark}

\begin{Remark}\label{rem_extgraphs_are_really_just_2d}
The terminology {\em extruded} graph needs justification.
	In Definition~\ref{def_extgraph}, they have been introduced as surfaces with decorations and algebraic labels.
	An alternative view is three-dimensional, by considering the cylinder over the surface.
	One component of the boundary of the cylinder then contains defect lines and nodes.
	From the nodes, perpendicular circular {\em rays} emanate and connect the node to the other boundary component.
	These \emph{rays} carry the ray label.
	From this perspective, nodes are no longer labeled.
	Note that a special case of ray	categories are Drinfeld centers which are braided and thus categorically inherently three-dimensional objects.
	Further discussion can be found in \cite[Remarks~3.2--3.4]{jf-thesis}.
	Mathematically, both perspectives are completely equivalent; they are simply different ways of thinking about the same data.
\end{Remark}

\subsection{The state-sum modular functor}\label{sec_fssrecap}

The evaluation of extruded graphs takes values in vector spaces provided by the state-sum modular functor described in \cite{fss-statesum}.
We briefly review an alteration of this modular functor, namely the oriented/spherical version as opposed to the framed/rigid version.
Being a symmetric monoidal functor between bicategories, the modular functor consists of several pieces of data.
One of these pieces of data is the \emph{block space} $\blockspace$, which is a vector space assigned to an extruded graph $\sigsurf$.

\begin{Remark}\label{rem_fss_not_needed}
	We do not need to explicitly construct an oriented/spherical version of the modular functor
	from \cite{fss-statesum}, since we only need block spaces.
	Moreover, we only consider fine defect surfaces, for which block spaces are easier to define.
	In fact, moving to an oriented/spherical setting {\em simplifies} the construction in \cite{fss-statesum} where the role
	of the framing was to control powers of the double dual (which in our situation is trivialized by
	the pivotal structure).
	Nevertheless, it is useful to think of the block space as a component of a larger structure,
	the modular functor, cf.~\cite[Remark~5.28]{fss-statesum}.
\end{Remark}

We now fix an extruded graph $\sigsurf$. Let $\raycat(\sigsurf)$ denote the Deligne product over the ray categories for all nodes $L$ (intervals and disks) in $\sigsurf$
\[
\raycat(\sigsurf) := \bigdeligne_L \raycat(L).
\]
Let $x \btimes y \btimes \cdots \in \raycat(\sigsurf)$ be the node labels of $\sigsurf$. We proceed to recall the definition of the associated block space $\blockspace = \blockspace(x \btimes y \btimes \cdots)$.

The choices of objects that label defect lines and nodes of $\sigsurf$ permit us to associate a vector space to each node $L$:
Let $x \in \raycat(L)$ denote the node label, and let $(m_1 \in \mcat_1, m_2 \in \mcat_2, \dots)$ be the list of objects that label the defect lines adjacent to $L$; they define an object in the \emph{node category} of $L$, which is the \emph{ordinary} Deligne product
$\nodecat (L) := \mcat_1 \deligne \mcat_2 \deligne \cdots
$
of the bimodule categories that label the defect lines adjacent to $L$.
Notice that there is a~forgetful functor~${U\colon \raycat(L) \to \nodecat(L)}$ from the ray category to the node category, which forgets the balancings that come with the relative Deligne product.
We use $U$ to define the vector space
\begin{equation}\label{eq_nodespace}
	\nodespace(m_1,m_2,\dots) := \homsetin{\nodecat(L)}{m_1 \deligne m_2 \deligne \cdots}{U(x)},
\end{equation}
to be called the \emph{node space} for the node $L$.

Varying the objects $m_1, m_2, \dots$ in the expression \eqref{eq_nodespace} gives rise to the \emph{node space functor} for a node $L$ of $\sigsurf$
\begin{equation*}
	\nodespace(L) := \homsetin{\nodecat(L)}{-}{U(x)}\colon\ \opcat{\nodecat(L)} \to \vect.
\end{equation*}
Tensoring over all nodes $L$ of $\sigsurf$ gives the (\emph{total}) node space functor associated to the surface $\sigsurf$
\begin{equation*}
	\nodespace := \bigotimes_L \nodespace(L) \colon\ \bigdeligne_L \opcat{\nodecat(L)} = \opcat{\nodecat} \to \vect.
\end{equation*}
Here, we call the Deligne product \smash{$\bigdeligne_L \opcat{\nodecat(L)} = \opcat{\nodecat}$} denotes the (total) node category for the surface~$\sigsurf$. As each defect line starts and ends at a node, the bimodule category $\mcat$ associated to a defect line appears precisely twice in the total node category, once as $\mcat$ and once as $\opcat\mcat$.
Thus the terms of the node category $\nodecat$ can be rearranged as follows
\begin{equation*}
	\bigdeligne_L \nodecat(L) = \nodecat = \bigdeligne_\mcat \mcat \deligne \opcat\mcat,
\end{equation*}
where now $\mcat$ runs over defect lines of $\sigsurf$.
Recall from \eqref{eq_def_silentobjs} that there is a silent object
\begin{equation*}
	\ewid{\mcat} = \bigoplus_\si{m} \si{m}\deligne \opcat{\si{m}} \in \mcat \deligne\opcat{\mcat}.
\end{equation*}
The Deligne product over all of these objects $\ewid{\mcat}$ provides us with a silent object in the total node category
\begin{equation*}
	\bigdeligne_\mcat \ewid{\mcat} \cong \ewid{(\deligne_\mcat \mcat)} \in \nodecat.
\end{equation*}
Following \cite{fss-statesum}, we define the \emph{pre-block space} as the value of the total node functor on this object (using the Sweedler-like notation from \eqref{eq_reldel_forget_sweed} for the forgetful functor $U$)
\begin{gather}
	\preblock := \nodespace(\ewid{(\deligne_\mcat \mcat)}) = {}_{\nodecat}\Bigl\langle\bigoplus_{\si{m},\si{n}} \si{m} \deligne \opcat{\si{m}} \deligne \si{n} \deligne\opcat{\si{n}} \deligne\cdots,\sweed{x}{\mcat} \deligne \opcat{\sweed{y}{\mcat}} \deligne \sweed{z}{\ncat} \deligne \opcat{\sweed{x}{\ncat}} \deligne\cdots\!\Bigr\rangle.\!\!\!\label{eq_def_preblock}
\end{gather}

\begin{Remark}\label{rem_preblock_shortform}
	By \eqref{eq_sil}, the pre-block space is isomorphic to the tensor product of hom-spaces of bimodule categories
	\begin{equation}\label{eq_preblock_shortform}
		\preblock \cong \homset{\sweed{y}{\mcat}}{\sweed{x}{\mcat}} \otimes \homset{\sweed{x}{\ncat}}{\sweed{z}{\ncat}} \otimes \cdots.
	\end{equation}
\end{Remark}

The block space $\blockspace$ from \cite{fss-statesum}, which we work towards defining, is obtained as a subspace of the pre-block space $\preblock$.
It consists of those vectors satisfying a condition that we will now describe. It can be thought of as a \emph{flatness condition}, see \cite[p.~5]{fss-statesum}.

We pick a domain labeled by a spherical fusion category $\acat$.
Assume that one of the defect lines adjacent to $\acat$ is labeled by a spherical bimodule category $\mcat$.
Moreover, we for now restrict to the case where the two domains adjacent to the defect line are distinct.
Without loss of generality, take $\mcat$ to be a \emph{left} $\acat$-module.
Denote the labels of the other defect lines adjacent to~$\acat$ by $\ncat$, $\kcat$, $\lcat$, \dots, and nodes by $x,y,z,\dots$ (see Figure~\ref{pic_clean_domain}).
\begin{figure}[ht]\centering
\includegraphics[scale=1]{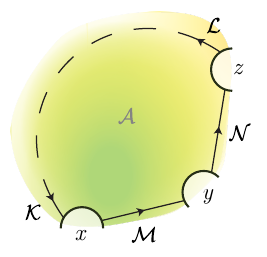}
\caption{}\label{pic_clean_domain}
\end{figure}

Given the functor $S_\mcat\colon \acat \deligne \opcat{\acat} \to \vect$ defined by
\begin{equation}\label{eq_def_sm}
	S_\mcat\big(a, \opcat{a'}\big) := \nodespace_{x,y,\dots}\big(a \ewid{\mcat} \opcat{a'} \deligne \ewid{\ncat} \deligne \ewid{\kcat} \deligne \ewid{\lcat} \deligne, \dots\big)
\end{equation}
a comparison of \eqref{eq_def_preblock} and \eqref{eq_def_sm} reveals that the pre-block space $\preblock$ is obtained from $S_\mcat$ as
\begin{equation}\label{eq_preblock_from_bibal_func}
	\preblock \cong S_\mcat \big(\one, \opcat{\one}\big).
\end{equation}
We now equip $S_\mcat$ with the structure of a bi-balanced functor, using the explicit form of the pre-block space given in \eqref{eq_def_preblock}.
A particular component of the balancing is given by
\begin{equation}\label{eq_def_holbal}
	\begin{tikzcd}[row sep=4mm]
		S_\mcat\big(a, \opcat{\one}\big) = \bigoplus_{\si{mnkl}} \homset{\si{k} \deligne \opcat{a\si{m}} \deligne \cdots}{U(x)} \otimes \homset{\si{m} \deligne \opcat{\si{n}} \deligne \cdots}{U(y)} \otimes \bigl\langle\si{n} \deligne \opcat{\si{l}} \deligne \cdots,U(z)\bigr\rangle \otimes \cdots \arrow[d, "{\text{Hom-balancing}}"]
		\arrow[gray, dashed, rounded corners, to path={	
			([xshift=11.5em,yshift=.8em]\tikztostart.south west)
			-- ([xshift=11.5em,yshift=.4em]\tikztostart.south west)
			-- ([xshift=12.3em,yshift=0.1em]\tikztotarget.north west)
			-- ([xshift=12.3em,yshift=-0.5em]\tikztotarget.north west)}]{d} \\
		\bigoplus_{\si{mnkl}} \homset{\si{k} \deligne \opcat{\si{m}} \deligne \cdots}{U(x) a} \otimes \homset{\si{m} \deligne \opcat{\si{n}} \deligne \cdots}{U(y)} \otimes \bigl\langle\si{n} \deligne \opcat{\si{l}} \deligne \cdots,U(z)\bigr\rangle \otimes \cdots \arrow[d, "{\text{balancing of $x$}}"]
		\arrow[gray, dashed, rounded corners, to path={	
			([xshift=12.3em,yshift=.8em]\tikztostart.south west)
			-- ([xshift=12.3em,yshift=.4em]\tikztostart.south west)
			-- ([xshift=10.1em,yshift=0.1em]\tikztotarget.north west)
			-- ([xshift=10.1em,yshift=-0.5em]\tikztotarget.north west)}]{d} \\
		\bigoplus_{\si{mnkl}} \homset{\si{k} \deligne \opcat{\si{m}} \deligne \cdots}{a U(x)} \otimes \homset{\si{m} \deligne \opcat{\si{n}} \deligne \cdots}{U(y)} \otimes \bigl\langle\si{n} \deligne \opcat{\si{l}} \deligne \cdots,U(z)\bigr\rangle \otimes \cdots \arrow[d, "{\text{Hom-balancing}}"]
		\arrow[gray, dashed, rounded corners, to path={	
			([xshift=10.1em,yshift=.8em]\tikztostart.south west)
			-- ([xshift=10.1em,yshift=.4em]\tikztostart.south west)
			-- ([xshift=4.4em,yshift=0.1em]\tikztotarget.north west)
			-- ([xshift=4.4em,yshift=-0.5em]\tikztotarget.north west)}]{d} \\
		\bigoplus_{\si{mnkl}} \homset{a^* \si{k} \deligne \opcat{\si{m}} \deligne \cdots}{U(x)} \otimes \homset{\si{m} \deligne \opcat{\si{n}} \deligne \cdots}{U(y)} \otimes \bigl\langle\si{n} \deligne \opcat{\si{l}} \deligne \cdots,U(z)\bigr\rangle \otimes \cdots \arrow[d, dotted, "\text{several balancings}"{left}]
		\arrow[gray, dashed, rounded corners, no head, to path={	
			([xshift=4.4em,yshift=.8em]\tikztostart.south west)
			-- ([xshift=4.4em,yshift=.4em]\tikztostart.south west)
			-- ([xshift=9em,yshift=-0.1em]\tikztostart.south west)}]
		\arrow[gray, dashed, rounded corners, to path={	
			([xshift=19em,yshift=-0.4em]\tikztostart.south west)
			-- ([xshift=26.2em,yshift=0.1em]\tikztotarget.north west)
			-- ([xshift=26.2em,yshift=-0.5em]\tikztotarget.north west)}]{d}
		\\
		\bigoplus_{\si{mnkl}} \homset{\si{k} \deligne \opcat{\si{m}} \deligne \cdots}{U(x)} \otimes \homset{\si{m} \deligne \opcat{\si{n}} \deligne \cdots}{U(y)} \otimes \bigl\langle\si{n} \deligne \opcat{a\si{l}} \deligne \cdots,U(z)\bigr\rangle \otimes \cdots \arrow[d, dotted, "\text{several balancings}"{left}]
		\arrow[gray, dashed, start anchor={[xshift=26.2em,yshift=.7em]south west}, end anchor = {[xshift=31.9em,yshift=.4em]south west},bend right=40] 
		\arrow[gray, dashed, start anchor={[xshift=31.8em,yshift=.7em]south west}, end anchor = {[xshift=29.9em,yshift=.5em]south west},bend left=30]
		\arrow[gray, dashed, rounded corners, crossing over, to path={	
			([xshift=29.6em,yshift=.8em]\tikztostart.south west)
			-- ([xshift=29.6em,yshift=.4em]\tikztostart.south west)
			-- ([xshift=24.7em,yshift=0.1em]\tikztotarget.north west)
			-- ([xshift=24.7em,yshift=-0.5em]\tikztotarget.north west)}]{d}\\
		\bigoplus_{\si{mnkl}} \homset{\si{k} \deligne \opcat{\si{m}} \deligne \cdots}{U(x)} \otimes \homset{\si{m} \deligne \opcat{\si{n}} \deligne \cdots}{U(y)} \otimes \homset{a^*\si{n} \deligne \opcat{\si{l}} \deligne \cdots}{U(z)} \otimes \cdots
		\arrow[dashed, gray, start anchor={[xshift=24.7em,yshift=.7em]south west}, end anchor = {[xshift=16.2em,yshift=.4em]south west},bend left=40, looseness=0.8]
		\arrow[gray, dashed, rounded corners, crossing over, to path={	
			([xshift=22.0em,yshift=.8em]\tikztostart.south west)
			-- ([xshift=22.0em,yshift=.4em]\tikztostart.south west)
			-- ([xshift=20.2em,yshift=0.1em]\tikztotarget.north west)
			-- ([xshift=20.2em,yshift=-0.5em]\tikztotarget.north west)}]{d}
		\arrow[gray, dashed, start anchor={[xshift=16.4em,yshift=.8em]south west}, end anchor = {[xshift=21.7em,yshift=.5em]south west},bend right=30, looseness=0.6]
		\arrow[d, dotted, crossing over, "\text{several balancings}\qquad"{left}]
		\\
		\bigoplus_{\si{mnkl}} \homset{\si{k} \deligne \opcat{\si{m}} \deligne \cdots}{U(x)} \otimes \homset{\si{m} \deligne \opcat{\si{n}} \deligne \cdots}{aU(y)} \otimes \bigl\langle\si{n} \deligne \opcat{\si{l}} \deligne \cdots,U(z)\bigr\rangle \otimes \cdots
		\arrow[gray, dashed, rounded corners, to path={	
			([xshift=20.2em,yshift=.8em]\tikztostart.south west)
			-- ([xshift=20.2em,yshift=.4em]\tikztostart.south west)
			-- ([xshift=14.5em,yshift=0.1em]\tikztotarget.north west)
			-- ([xshift=14.5em,yshift=-0.5em]\tikztotarget.north west)}]{d}
		\arrow[d, "{\text{Hom-balancing}}", crossing over]\\
		\bigoplus_{\si{mnkl}} \homset{\si{k} \deligne \opcat{\si{m}} \deligne \cdots}{U(x)} \otimes \homset{a^* \si{m} \deligne \opcat{\si{n}} \deligne \cdots}{U(y)} \otimes \bigl\langle\si{n} \deligne \opcat{\si{l}} \deligne \cdots,U(z)\bigr\rangle \otimes \cdots = S_\mcat(\one,\opcat{a}).
	\end{tikzcd}\hspace{-10mm}
\end{equation}
The left balancing for $S_\mcat$ defined in \eqref{eq_def_holbal} is called ``holonomy'' in \cite[Section~4.11]{fss-statesum}.
It can be pictured as ``walking around the boundary'' of the domain $\acat$ with an object $a \in \acat$
(see Figure~\ref{figure8}).
\begin{figure}[ht]\centering
\includegraphics[scale=1]{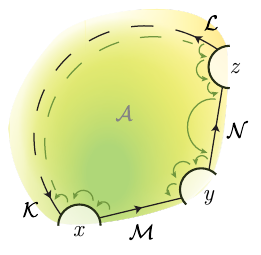}
\caption{}\label{figure8}
\end{figure}

The right balancing of $S_\mcat$ is directly inherited from the balancing of the silent object $\ewid{\mcat}$.
Together, this turns $S_\mcat$ into a bi-balanced functor.

Balanced functors $S_\mcat$, $S_\ncat$, and similar can be constructed for each defect line adjacent to $\acat$, as long as those defect lines separate $\acat$ from a different domain, or lie on the boundary of the defect surface.
It may be the case, however, that a defect line has the domain $\acat$ as its left- and right-adjacent domain, as in Figure~\ref{pic_internal_defline}.
\begin{figure}[ht]\centering
\includegraphics[scale=1]{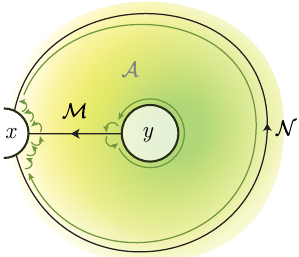}
\caption{}\label{pic_internal_defline}
\end{figure}

Then $\mcat$ is an $\acat$-$\acat$-bimodule category.
In this case, $S_\mcat$ is a functor with two balancings as well, but in the ``disconnected'' sense as in \eqref{eq_bibal_ambiguous_disconn}.

We use the equalizer $\eq {S_\mcat}$ of $S_\mcat$ defined in Section~\ref{sec_spleqs} to define the block spaces.
For this to be consistent, we need the result \cite[Lemma~4.22]{fss-statesum}, which in our language assumes the following form.
\begin{Proposition}\label{prop_holonomy_startpoint_indep}
	There are canonical isomorphisms
$\baleqat{S_\mcat}{\one}{\one} \cong \baleqat{S_\ncat}{\one}{\one}
$
	between equalizers for each pair of defect lines $(\mcat,\ncat)$ adjacent to $\acat$.
\end{Proposition}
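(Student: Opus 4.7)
The strategy is to construct, for each adjacent pair of defect lines, a canonical bi-balanced natural isomorphism $\Phi_{\mcat,\ncat}\colon S_\mcat \Rightarrow S_\ncat$ and invoke Remark~\ref{rem_bal_trafo_maps_equalizers} to descend it to an isomorphism $\baleqat{S_\mcat}{\one}{\one} \cong \baleqat{S_\ncat}{\one}{\one}$. Since, on objects, both $S_\mcat(a,\opcat{a'})$ and $S_\ncat(a,\opcat{a'})$ are built from the same pre-block data \eqref{eq_def_preblock} with an $a$-insertion on one line and an $\opcat{a'}$-insertion on the other, the underlying iso of functors is, up to the natural isomorphism $\soc$ from \eqref{eq_sil}, just a reshuffling of tensor factors among Hom-spaces. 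The content of the statement is that this tautological reshuffling intertwines the two balancings.

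First I would reduce to the case that $\mcat$ and $\ncat$ are \emph{adjacent} along the boundary of the domain labeled $\acat$, i.e., share a common node $L$; the general case then follows by composing isomorphisms along the cyclic list of defect lines around $\acat$. For such an adjacent pair, the key input is the balancing of the node label $x \in \raycat(L) = \cdots \reldel \mcat \reldel \ncat \reldel \cdots$: at the position where $\mcat$ meets $\ncat$, this balancing provides, for each $a \in \acat$, an iso of the form $a \sweed{x}{\mcat} \deligne \sweed{x}{\ncat} \cong \sweed{x}{\mcat} \deligne \sweed{x}{\ncat} a$ (up to dualities dictated by the orientations of the defect lines at $L$). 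Together with \eqref{eq_induction_reldel_objects} and the adjunction formulas \eqref{eq_def_indadjr}, this is precisely the ingredient that slides an $a$-insertion from the $\mcat$-side of $L$ to the $\ncat$-side.

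Second I would check that $\Phi_{\mcat,\ncat}$ intertwines the left (holonomy) balancings of $S_\mcat$ and $S_\ncat$ from \eqref{eq_def_holbal}. The holonomy based at $\mcat$ walks an $a$-strand around the whole boundary of the $\acat$-domain starting and ending at $\mcat$; the one based at $\ncat$ does the same starting and ending at $\ncat$. The two walks differ only by the short arc from $\mcat$ to $\ncat$ across $L$, and that arc is absorbed exactly by the node balancing from the previous paragraph. Compatibility with the right balancings, which are inherited from the silent objects $\ewid{\mcat}$ and $\ewid{\ncat}$ respectively, is automatic, since each such balancing only touches the single bimodule component it is attached to and is untouched by the reshuffling.

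The main obstacle is the bookkeeping in the previous step: writing \eqref{eq_def_holbal} explicitly at both $\mcat$ and $\ncat$ and matching every row of Hom-balancings through $\Phi_{\mcat,\ncat}$ is a careful exercise, made workable by the diagrammatic calculus of Section~\ref{sec_balfuns}, where each intermediate equality can be read off as an isotopy of bi-balanced functor diagrams. Canonicality of $\Phi_{\mcat,\ncat}$, i.e., independence of the direction in which one travels from $\mcat$ to $\ncat$ along the cyclic boundary of $\acat$, reduces to the statement that a full loop around the boundary acts as the identity once the balancings of all node labels and silent objects are folded in; this is the same fact that makes the equalizer condition in \eqref{diag_balanced_limit} well posed in the first place, so no further input is required.
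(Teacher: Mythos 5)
First, note that the paper does not prove this proposition at all: it is imported verbatim from \cite[Lemma~4.22]{fss-statesum} (``which in our language assumes the following form''), so your attempt is necessarily a different route, and the fair comparison is with that cited lemma. Your overall strategy -- a change-of-base-point transport along the boundary of the $\acat$-domain, descended to equalizers via Remark~\ref{rem_bal_trafo_maps_equalizers} -- is the right kind of argument, but as written it has concrete gaps. The opening claim that the underlying isomorphism is ``just a reshuffling of tensor factors'' is false: for general arguments $(a,\opcat{a'})$ the spaces $S_\mcat\big(a,\opcat{a'}\big)$ and $S_\ncat\big(a,\opcat{a'}\big)$ carry their insertions on different Hom-factors of \eqref{eq_def_preblock}, and any comparison must be built from balancings. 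You only supply the slide across the shared node $L$; but a natural transformation $S_\mcat\Rightarrow S_\ncat$ must also move the insertion sitting at the \emph{far} ends of the two defect lines, and that transport has to cross the $\mcat$- and $\ncat$-lines themselves (via the balancings of the silent objects $\ewid{\mcat}$, $\ewid{\ncat}$) or travel the long way around through all the other nodes, exactly as in \eqref{eq_def_holbal}. This part of the construction is missing, and different choices differ by a full holonomy, so at the level of functors your $\Phi_{\mcat,\ncat}$ is not pinned down. Relatedly, compatibility with the \emph{right} balancings is not ``automatic'': those balancings are precisely the $\ewid{\mcat}$- resp.\ $\ewid{\ncat}$-balancings that $\Phi_{\mcat,\ncat}$ itself must use, so this is a genuine coherence check. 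Finally, the canonicality argument is circular as stated: well-posedness of \eqref{diag_balanced_limit} holds for any parallel pair and says nothing about loops; what you actually need is that on elements of $\baleqat{S_\mcat}{\one}{\one}$ the defining condition $f(v)=g(v)$ identifies the full boundary transport with the trivial insertion, and this should be invoked explicitly.

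A remark that would simplify your task considerably: since the proposition only concerns the equalizers at $(\one,\one)$ and this paper works in the semisimple, spherical setting, you need not construct a bi-balanced natural isomorphism of functors at all. By Proposition~\ref{lem_speqs_of_bibals}, $\baleqat{S_\mcat}{\one}{\one}$ and $\baleqat{S_\ncat}{\one}{\one}$ are the images inside $\preblock$ of the explicit holonomy idempotents $\holidem_\mcat$ and $\holidem_\ncat$ based at the two defect lines; it then suffices to verify $\holidem_\mcat\circ\holidem_\ncat=\holidem_\ncat$ and $\holidem_\ncat\circ\holidem_\mcat=\holidem_\mcat$ (a finite diagrammatic computation in the calculus of Section~\ref{sec_balfuns}, conjugating one loop by the partial transport), which exhibits the two subspaces as equal as retracts of $\preblock$ and yields the canonical isomorphism directly. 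Either that computation or the completed transport construction is the substantive content your proposal currently asserts rather than proves.
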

In order to define block spaces, we need to consider all domains simultaneously.
For each domain $\acat, \bcat, \ccat, \dots$, choose a defect line $\mcat_\acat$ adjacent to $\acat$, $\mcat_\bcat$ adjacent to~$\bcat$, $\mcat_\ccat$ adjacent to~$\ccat$, and so on.

\begin{Definition}\label{def_blockspace_fine}
	For an extruded graph $\sigsurf$, the associated \emph{block space} is the vector space $\blockspace$ given by the intersection
	\begin{equation*}
		\blockspace := \bigcap_{\text{domains }\acat} \baleqat{S_{\mcat_\acat}}{\one}{\one}
	\end{equation*}
	of the equalizers $\baleqat{S_{\mcat_\acat}}{\one}{\one}$, where for each domain $\acat$, an adjacent defect line $\mcat_\acat$ has been chosen.
	Proposition~\ref{prop_holonomy_startpoint_indep} ensures that this is well defined up to a canonical isomorphism.
\end{Definition}
When unraveling Definition~\ref{def_blockspace_fine}, one can see that it agrees with the original definition of the block functor \cite[Definition 4.21]{fss-statesum}.
Note that the block space $\blockspace$ is by definition a sub-vector space of the pre-block space $\preblock$.

\begin{Example}\label{ex_equat_punctured_sphere}
	This example is a special case of \cite[Example 4.35]{fss-statesum}.
	Let $\acat$ and $\bcat$ be spherical fusion categories, and let $\mcat$ be an $\acat$-$\bcat$-bimodule category and $\ncat$ be a $\bcat$-$\acat$-bimodule category.
	Consider as a labeled defect surface the sphere with two nodes and two defect lines connecting the nodes, such that each node has an incoming and an outgoing defect line attached.
	We label the domains by $\acat$ and $\bcat$, and the defect lines by $\mcat$ and $\ncat$, accordingly.
	We pick node labels~${x \in \opcat{\mcat} \reldel \ncat \reldel}$ and $y \in \opcat{\ncat} \reldel \mcat \reldel$ in the ray categories associated to the nodes.
	(The ``dangling product'' notation has been introduced in \eqref{eq_dangling_product}.)
	The pre-block space for this defect surface is the space of natural transformations
	\begin{equation*}
		\preblock \cong \homsetin{\opcat{\ncat} \deligne \mcat}{U(\opcat{x})}{U(y)} \cong \homsetin{\funcat{\mcat}{\ncat}}{\ew(U\opcat{x})}{\ew(U(y))} = \mathrm{Nat}(\ew(U(\opcat{x})),\ew(U(y)).
	\end{equation*}
	Furthermore, the block space is given by the subspace of bimodule natural transformations
	\begin{equation*}
		\blockspace \cong \homsetin{\opcat{\ncat} \reldel \mcat \reldel}{\opcat{x}}{y} \cong \homsetin{\bimodfun{\acat}{\bcat}{\mcat}{\ncat}}{\mew(\opcat{x})}{\mew(y)} = \mathrm{Nat}(\mew(U(\opcat{x})),\mew(U(y))).
	\end{equation*}
\end{Example}

\begin{Remark}\label{rem_refinement_blockspace}
	The block space of a framed defect surface that is not fine has been expressed~\cite[Definition~5.24]{fss-statesum}, in the case of
	the framed modular functor, as a limit over all framed fine refinements.
	A fine refinement of a non-fine framed defect surface $\sigsurf$ is a fine defect surface, whose defect structure (nodes and defect lines) is a subdivision of the framed defect structure of~$\sigsurf$.
	A similar construction can be expected to exist for the oriented state-sum modular functor on
	surfaces that are not fine, but has not been worked out to our knowledge, and is not needed for our purposes.
\end{Remark}

\subsection{Definition of the evaluation} \label{sec_eval}
To each extruded graph $\sigsurf$ we associate a linear map $\evaluate{\sigsurf}\colon \nodespace \to \blockspace$ from its node space to its block space, called its \emph{evaluation}.
This section is dedicated to defining this evaluation, which factors through the pre-block space $\preblock$ and is thus a composition of two linear maps \smash{$\nodespace \xrightarrow{\delta} \preblock \xrightarrow{\holsurj} \blockspace$}.
We start by explaining the second map $\holsurj$.

In Definition~\ref{def_blockspace_fine} of the block spaces, the equalizers of bi-balanced functors $\baleqat{S_{\mcat_\acat}}{\one}{\one}$ appear, which by Lemma~\ref{lem_speqs_of_bibals} come with a retract structure.
In particular, there is an idempotent $\holidem_\acat$ on~\smash{${S_{\mcat_\acat}}({\one},{\one}) \eqwithref{eq_preblock_from_bibal_func} \preblock$} whose image is the sub-vector space $\baleqat{S_{\mcat_\acat}}{\one}{\one}$.
The idempotents $\holidem_\acat$ for different domains $\acat$ commute, allowing us to define:
\begin{Definition}\label{def_holidem}
	We call the idempotent $\holidem_\acat$ for a domain $\acat$ its \emph{holonomy idempotent}.
	The \emph{holonomy idempotent} of an extruded graph $\sigsurf$ is given by the composition
$\holidem := \holidem_\acat \circ \holidem_\bcat \circ \cdots
$
	of holonomy idempotents for each domain of $\sigsurf$.
\end{Definition}
It follows immediately from Definition~\ref{def_blockspace_fine} that the image of $\holidem$ is the block space
${\mathrm{im}(\holidem) \!=\! \blockspace \!\subset\! \preblock}$.
The corestriction of the holonomy idempotent $\holidem$ onto its image is the surjection $\holsurj$, which was announced in the beginning of the section
\begin{equation}\label{eq_def_holproj}
	\begin{tikzcd}
		\preblock \arrow[r, "\holidem"] \arrow[dr, "\holsurj", two heads] & \preblock \\
		& \blockspace \arrow[u, hook].
	\end{tikzcd}
\end{equation}

Next, we need to define the map $\delta\colon \nodespace \to \preblock$.
It is obtained from the morphisms
\begin{equation*}
	\tau_m \colon\ \ewid{\mcat} = \bigoplus_\si{n} \si{n} \deligne \opcat{\si{n}} \to m \deligne\opcat{m},
\end{equation*}
for $m \in \mcat$, defined by
\begin{equation*}
	\tau_m := \bigoplus_\si{n} \pdim{\si{n}}  (\basisel{m}{\si{n}} \deligne \basisel{\si{n}}{m}).
\end{equation*}
The inclined reader will notice that the $\tau_m$ are the structure morphisms of the end of the functor~${-\deligne-}$.
We then define $\delta$ as the linear map
\begin{equation}\label{eq_eval_delta}
	\nodespace = \nodespace(m_1 \deligne \opcat{m_1} \deligne m_2 \deligne \opcat{m_2} \deligne \cdots) \xrightarrow{\nodespace(\tau_{m_1 \deligne m_2 \deligne \cdots})} \nodespace(\ewid{\mcat_1 \deligne \mcat_2 \deligne \cdots}) = \preblock.
\end{equation}

\begin{Definition}[evaluation] \label{def_eval}
	Let $\sigsurf$ be an extruded graph.
	The \emph{evaluation} $\evaluate{\sigsurf} \colon \nodespace \to \blockspace$ of $\sigsurf$ is the linear map
	\begin{equation}\label{eq_def_eval_shortform}
		\evaluate{\sigsurf} := \holsurj \circ \delta.
	\end{equation}
\end{Definition}

\begin{Remark}\label{rem_evaluate_on_core}
	We often would like to obtain a scalar from an extruded graph $\sigsurf$.
	This is only possible with additional data: a vector $f \in \nodespace$ in the node space and a linear form $\varphi \in \blockspace^*$ on the block space.
	We say that
	\begin{equation}\label{eq_total_eval}
		\totaleval{\varphi}{\sigsurf}{f} = \varphi ( \evaluate{\sigsurf} f ) \in \field
	\end{equation}
	is the \emph{scalar evaluation} of the triple $(\sigsurf, f, \varphi)$.
	The left-hand side of \eqref{eq_total_eval} uses the pairing~${(-,-)\colon \blockspace^* \otimes \blockspace \to \field}$.
\end{Remark}

\begin{Remark}\label{rem_modtraces_not_necessary}
	Our construction supposes that all (bi-)module categories are equipped with a~trace.
	However, the trace structure is never used in the definition of the evaluation procedure.
	We will see in Theorem~\ref{thm_uniqueness} that traces provide distinguished choices for linear forms ${\varphi \in \blockspace^*}$.\looseness=1
\end{Remark}

\begin{Example}[generalized $6j$ symbols]\label{ex_six_j}
	A homeomorphism between the surface of a tetrahedron and the sphere $\mathbb{S}^2$ defines a graph with 4 vertices and 6 edges on the sphere.
	Let us consider an extruded graph $\sigsurf$ on $\mathbb{S}^2$, whose nodes and defect lines are obtained from the vertices and edges of the tetrahedron (after picking an orientation for each edge).
	The domains of $\sigsurf$ are labeled by arbitrary spherical fusion categories, and the defect lines by arbitrary traced bimodule categories.
	The object labels for the defect lines, as well as the node labels, are all simple objects in the respective categories.
	The evaluation of such an extruded graph, composed with some~${\varphi \in \blockspace^*}$ defines a linear form $\varphi \circ \evaluate{\sigsurf} \in \nodespace^*$ on the total node space of $\sigsurf$.
	This linear form further generalizes the \emph{generalized $6j$-symbol} from \cite{meusburger-statesumdefects}.
\end{Example}

\begin{Remark}\label{rem_refinement_evaluation}
	A state-sum oriented modular functor defined on non-fine defect surfaces as mentioned in Remark~\ref{rem_refinement_blockspace} would allow to evaluate also
	non-fine extruded graphs. Such a~construction is beyond the scope of this work.
\end{Remark}

\subsection{Loop graphs and lasso graphs} \label{sec_lassographs}
We now explicitly compute the invariant for two of the simplest possible types of extruded graphs: First, a \emph{lasso graph} $Q$, whose underlying surface is the sphere $\mathbb{S}^2$. We will later see (see Theorem~\ref{thm_uniqueness}) that any extruded graph on the sphere can be transformed -- using the moves from Section~\ref{sec_moves} -- into this lasso graph.
A lasso graph is an extruded graph $Q$ (see Figure~\ref{pic_lassograph}): It is a~sphere with a~single node and a~single circular defect line starting and ending at the node.\looseness=1

\begin{figure}[ht]\centering
\includegraphics[scale=1]{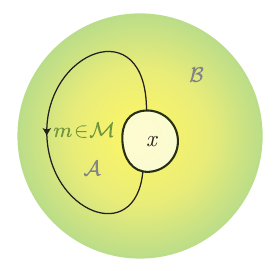}
\caption{}\label{pic_lassograph}
\end{figure}


\pagebreak

There are three levels of algebraic labels to be specified:
\begin{itemize}\itemsep=0pt
	\item The two hemispheres forming the connected components of the complement of the defect line are labeled by spherical fusion categories $\acat$ and $\bcat$.
	\item The single defect line is labeled by a traced bimodule category $\bimod{\acat}{\mcat}{\bcat}$.
	This implies that the ray category associated to the single node is the balanced category
\[
\raycat = \mcat \reldelov{\bcat} \opcat{\mcat} \reldelov{\acat}.
\]
It will be important that this category contains a silent object $\zewid{\mcat}$, as described in Section~\ref{sec_silentobs}.
	In contrast, the node category is given by the (unbalanced) category
\[
\nodecat = \mcat \deligne \opcat{\mcat}.
\]
	\item The defect line carries an additional subordinate label: A choice of an object $m \in \mcat$.
	Moreover, we label the node by an object $x \in \raycat$.
	These choices determine the node space to be the Hom-space
\[
\nodespace(m\btimes \opcat{m}) = \homsetin{\nodecat}{m\btimes \opcat{m}}{U(x)}.
\]
\end{itemize}
This completes the three levels of labels that determine the extruded graph $Q$.

The extruded graphs of the second type, \emph{loop graphs}, form a subset of lasso graphs: they are characterized the fact that the node label $x ={} \zewid{\mcat}$ is silent.
Loop graphs are special because their evaluation against a particular ``transparent'' linear form $\theta \in \blockspace^*$ is a trace.
This is the main result of this section.

\begin{Theorem}\label{thm_elementary_graph}
	\label{sec_loopgraphs}
	Let $O$ be a loop graph, that is, an extruded graph as in Figure~{\rm \ref{pic_lassograph}} whose underlying surface is a sphere with a single node and a single defect line, labeled by some object $m \in \bimod{\acat}{\mcat}{\bcat}$ starting and ending at the node, whose node is labeled by the silent object $\zewid{\mcat}$.
	Then the block space $T$ of $O$ is the endomorphism space \smash{$T= \homsetin{\mcat \reldel \opcat{\mcat} \reldel}{\zewid{\mcat}}{\zewid{\mcat}}$}.
	Moreover, the linear form
	\begin{equation}\label{eq_dualoftheta}
		\theta := \catdim{\acat} \catdim{\bcat}  \trace{\mcat \reldel \opcat{\mcat} \reldel}(-) \in \blockspace^*,
	\end{equation}
	relates the evaluation $\evaluate{O}$ from Definition~{\rm\ref{def_eval}} to the module trace of the bimodule category $\mcat$
	\begin{equation}\label{eq_elementary_graph}
		\theta \circ \evaluate{O} = \trace{\mcat}\circ \soc^{-1} \in \nodespace^*.
	\end{equation}
	The isomorphism $\soc \colon\homset{m}{m} \to \homset{\ewid{\mcat}}{m \deligne \opcat{m}} = \nodespace$ appearing in \eqref{eq_elementary_graph} was introduced in Section~{\rm\ref{sec_genyon}}.
\end{Theorem}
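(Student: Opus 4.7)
My plan proceeds in three steps matching the three assertions of the theorem: identification of $\blockspace$, reduction of $\theta\circ\evaluate{O}$ to a trace on the node category $\mcat\deligne\opcat{\mcat}$, and a direct matching with $\trace{\mcat}\circ\soc^{-1}$.

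First, I compute the block space. For the loop graph the node category is $\mcat\deligne\opcat{\mcat}$ and $U(\zewid{\mcat})=\ewid{\mcat}$, so from \eqref{eq_def_preblock} one has $\preblock=\homsetin{\mcat\deligne\opcat{\mcat}}{\ewid{\mcat}}{\ewid{\mcat}}$. There are two hemispheric domains, labeled $\acat$ and $\bcat$, each contributing a holonomy equalizer condition. Lemma~\ref{lemma_homspace_projection} (the specialization of Proposition~\ref{lem_speqs_of_bibals} to the Hom-functor), applied once per domain, identifies each equalizer with the Hom-space in the corresponding center. Iterating the two conditions exhibits $\blockspace$ as the Hom-space of $\zewid{\mcat}$ in the double center $\modulecent{\acat}{\modulecent{\bcat}{\mcat\deligne\opcat{\mcat}}}=\mcat\reldel\opcat{\mcat}\reldel$, as claimed.

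Second, I unfold $\theta\circ\evaluate{O}$. By Definition~\ref{def_eval}, $\evaluate{O}(f)=\holsurj(\delta(f))$ with $\delta(f)=f\circ\tau_m$. Applying Lemma~\ref{lem_center_calabiyau} to each of the two nested centers making up $\mcat\reldel\opcat{\mcat}\reldel$ rewrites $\trace{\mcat\reldel\opcat{\mcat}\reldel}$ as $\frac{1}{\catdim{\acat}\catdim{\bcat}}\trace{\mcat\deligne\opcat{\mcat}}$ on underlying endomorphisms; the prefactor $\catdim{\acat}\catdim{\bcat}$ in the definition of $\theta$ is chosen precisely to absorb this normalization. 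Two applications of Lemma~\ref{lem_trace_of_retraction}, one for each of the idempotents composing $\holidem$, then remove $\holidem$ from inside the trace, leaving
\begin{equation*}
\theta(\evaluate{O}(f))=\trace{\mcat\deligne\opcat{\mcat}}(f\circ\tau_m).
\end{equation*}

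Third, I verify that the right-hand side equals $\trace{\mcat}(\soc^{-1}(f))$. Both sides are linear functionals on $\nodespace$, and since $\soc$ is an isomorphism it suffices to check, for $f=\soc(g)$ with $g\in\homsetin{\mcat}{m}{m}$, that $\trace{\mcat\deligne\opcat{\mcat}}(\soc(g)\circ\tau_m)=\trace{\mcat}(g)$. Expanding both $\soc(g)$ and $\tau_m$ via their Sweedler-type formulas, only the diagonal components $\si{n}\deligne\opcat{\si{n}}\to\si{n}\deligne\opcat{\si{n}}$ of $\soc(g)\circ\tau_m$ contribute to the trace. On each such component, the product trace on $\mcat\deligne\opcat{\mcat}$, together with the bimodule-trace compatibility \eqref{eq_module_trace_cond} and the duality of bases of $\homsetin{\mcat}{m}{\si{n}}$ and $\homsetin{\mcat}{\si{n}}{m}$, collapses the expression into a single $\mcat$-trace; summing over simples $\si{n}$ and invoking the resolution of the identity \eqref{eq_res_of_id} produces $\trace{\mcat}(g)$.

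The main technical hurdle is the third step, where the dimension factors $\pdim{\si{n}}$ coming from $\soc(g)$, $\tau_m$, and the trace on $\opcat{\mcat}$ (inherited from $\mcat$) must be tracked carefully so that the final collapse via the resolution of the identity yields precisely $\trace{\mcat}(g)$, without spurious dimension factors. The first two steps are essentially formal consequences of the framework developed in Section~\ref{sec_cats}.
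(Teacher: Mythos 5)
Your proposal is correct in substance and reaches the theorem by a route whose middle part is genuinely different from, and leaner than, the paper's. The paper first identifies the holonomy idempotent of a lasso graph with the canonical center-projection idempotent by an explicit unraveling (Lemma~\ref{lem_elemgraph_generalformula}, Figure~\ref{calc_lasso_longcalc}), and then, in Appendix~\ref{app_loopgraphs}, computes the components of $\holsurj(\delta(\soc(f)))$, applies $\soc^{-1}$, and traces via the diagram~\eqref{eq_thmelempf_calc_5}. You instead push the trace past the projector: after rewriting $\theta$ through iterated use of Lemma~\ref{lem_center_calabiyau} as $\trace{\mcat\deligne\opcat{\mcat}}$ of underlying endomorphisms (the prefactor $\catdim{\acat}\catdim{\bcat}$ cancelling the normalization, as you say), two applications of Lemma~\ref{lem_trace_of_retraction} delete $\holidem=\holidem_\acat\circ\holidem_\bcat$ under the trace, so everything reduces to the elementary identity $\trace{\mcat\deligne\opcat{\mcat}}(\soc(g)\circ\tau_m)=\trace{\mcat}(g)$, which indeed follows from the duality of the $\tenhomid$-bases and the resolution of the identity~\eqref{eq_res_of_id}. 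This buys a proof without the component bookkeeping of steps (1)--(5) of the paper's argument.

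Two points should be made explicit. First, both your identification of $\blockspace$ and your two uses of Lemma~\ref{lem_trace_of_retraction} presuppose that the holonomy idempotents $\holidem_\acat$, $\holidem_\bcat$ of the loop graph are exactly the idempotents~\eqref{eq_homspace_projection} for the relevant balancings. This is not a formal consequence of Proposition~\ref{lem_speqs_of_bibals}/Lemma~\ref{lemma_homspace_projection} alone: one must check that the holonomy balancing~\eqref{eq_def_holbal} of $S_\mcat$ reduces, for this graph, to the connected bi-balancing of the Hom-functor of the node category. That check is precisely the content of Lemma~\ref{lem_elemgraph_generalformula} (and the remark following it for $\bcat\neq\vect$), which you should cite or reprove; it also yields your block-space claim directly, cf.~\eqref{eq_lasso_block}. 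Second, your caution about dimension factors is warranted: the final identity closes only if the net power of $\pdim{\si{n}}$ carried jointly by $\soc(g)$ and $\tau_m$ is one (note that the appendix uses the components of $\soc$ without the factor $\pdim{\si{l}}$ written in~\eqref{eq_sil}, so the normalization must be tracked consistently); also, \eqref{eq_module_trace_cond} is not actually needed in that last step --- it enters only through Lemma~\ref{lem_trace_of_retraction}, and, in the paper's version, through the partial-trace argument at the very end.
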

\begin{Remark}
	If the module category $\mcat$ is indecomposable, the block space $\homset{\zewid{\mcat}}{\zewid{\mcat}}$ is one-dimensional.
\end{Remark}
Theorem~\ref{thm_elementary_graph} can be reformulated in the following useful fashion.
\begin{Corollary}\label{cor_loop_eval_scalar}
	Let $O$ be a loop graph as in Theorem~{\rm\ref{thm_elementary_graph}}, and let $f\colon m \to m$ be an endomorphism of $m \in \mcat$.
	Then the scalar evaluation of $(O, \soc(f), \theta)$, see \eqref{eq_total_eval}, is the scalar
	\begin{equation*}
		(\theta, \evaluate{O} \soc(f) ) = \trace{\mcat}(f) \in \field.
	\end{equation*}
\end{Corollary}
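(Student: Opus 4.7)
This corollary is essentially a notational repackaging of Theorem~\ref{thm_elementary_graph}. The plan is simply to specialize the equality of linear forms
\begin{equation*}
\theta \circ \evaluate{O} = \trace{\mcat}\circ \soc^{-1} \in \nodespace^*
\end{equation*}
provided by the theorem to the specific element $\soc(f) \in \nodespace$.

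First I would evaluate the right-hand side at $\soc(f)$: since $\soc \colon \homset{m}{m} \to \nodespace$ is an isomorphism of vector spaces, we have $\soc^{-1}(\soc(f)) = f$, so $(\trace{\mcat}\circ \soc^{-1})(\soc(f)) = \trace{\mcat}(f)$. Then I would evaluate the left-hand side at $\soc(f)$: unfolding the composition and invoking the pairing $(-,-)\colon \blockspace^* \otimes \blockspace \to \field$ from Remark~\ref{rem_evaluate_on_core}, we obtain $(\theta \circ \evaluate{O})(\soc(f)) = \theta(\evaluate{O}\soc(f)) = (\theta, \evaluate{O}\soc(f))$. Equating the two expressions yields the claimed identity.

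There is no real obstacle here: Theorem~\ref{thm_elementary_graph} has already done all the substantive work, including absorbing the normalization factor $\catdim{\acat}\catdim{\bcat}$ into the definition of $\theta$. The corollary merely rephrases the theorem in a form where one directly sees that, on node space vectors of the form $\soc(f)$, the scalar evaluation of a loop graph reproduces the module trace $\trace{\mcat}(f)$. If one wished to avoid invoking the theorem explicitly, the same argument could be repeated by unpacking the definition $\evaluate{O} = \holsurj \circ \delta$ and verifying that pairing with $\theta$ kills the projection~$\holsurj$ (since $\trace{\mcat \reldel \opcat{\mcat}\reldel}$ is invariant under the idempotent $\holidem$, by Lemma~\ref{lem_trace_of_retraction}), leaving only the map $\delta$ applied to $\soc(f)$, whose trace reduces via the definition of $\tau_m$ and equation~\eqref{eq_sil} to $\trace{\mcat}(f)$.
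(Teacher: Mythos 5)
Your proposal is correct and matches the paper's treatment: the corollary is stated there as an immediate reformulation of Theorem~\ref{thm_elementary_graph}, obtained exactly as you do by evaluating the identity $\theta \circ \evaluate{O} = \trace{\mcat}\circ \soc^{-1}$ at $\soc(f)$ and using that $\soc$ is an isomorphism. The substantive work lives in the proof of the theorem (Appendix~\ref{app_loopgraphs}), which your argument rightly takes as given.
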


The proof of Theorem~\ref{thm_elementary_graph} relies on an intermediate result for general lasso graphs.
To simplify the presentation, we specialize in the following to $\bcat = \vect$.
In essence, $\mcat$ is now just a~left $\acat$-module category.
On the topological level, $\vect$-labeled domains can be removed entirely, so we are left with a disk with a single boundary node as our defect surface.
The more general case where $\bcat$ is not necessarily $\vect$ does not provide additional insight.

The node category $\nodecat = \mcat \deligne \opcat{\mcat}$ is an $\acat$-$\acat$-bimodule category, whose center $Z_\acat\bigl(\nodecat\bigr) = \raycat$ is the ray category.
Because both the silent object $\zewid{\mcat}$ and the node label $x$ are objects of $\raycat$, the pre-block space
$
\preblock = \homsetin{\nodecat}{\ewid{\mcat}}{U(x)}
$
has a subspace $\homsetin{\raycat}{\zewid{\mcat}}{x}$;
in Lemma~\ref{lemma_homspace_projection}, we saw that there is a distinguished retraction $r\colon \homsetin{\nodecat}{\ewid{\mcat}}{U(x)} \to \homsetin{\raycat}{\zewid{\mcat}}{x}$ to this inclusion of hom-spaces.
\begin{Lemma}\label{lem_elemgraph_generalformula}
	Let $Q$ be a lasso graph with node label $x \in \raycat$.
	The block space of $Q$ is the hom-space in the ray category
	\begin{equation}\label{eq_lasso_block}
		\blockspace = \homsetin{\raycat}{\zewid{\mcat}}{x}.
	\end{equation}
	Moreover, the projection $\holsurj \colon \preblock \to \blockspace$ from \eqref{eq_def_holproj} is equal to the retraction into the hom-space of the center from Lemma~{\rm\ref{lemma_homspace_projection}}.
\end{Lemma}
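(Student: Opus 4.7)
The plan is to identify the bi-balanced functor $S_\mcat$ attached to the lasso graph with the Hom-functor of the $\acat$-$\acat$-bimodule category $\nodecat = \mcat \deligne \opcat{\mcat}$, and then to invoke Example~\ref{ex_centerhom_is_equalizer} together with Proposition~\ref{lem_speqs_of_bibals} to identify the equalizer with $\homsetin{\raycat}{\zewid{\mcat}}{x}$ and the projection $\holsurj$ with the retraction $r$ of Lemma~\ref{lemma_homspace_projection}.

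First, I would unpack the data: ignoring the $\vect$-labeled hemisphere, the unique domain is labeled by $\acat$ with its single adjacent defect line labeled by $\mcat$, so $\nodecat = \mcat \deligne \opcat{\mcat}$ carries an $\acat$-$\acat$-bimodule structure whose center is precisely $\raycat$, and both $\zewid{\mcat}$ and $x$ live in $\raycat$. Specializing the formula~\eqref{eq_def_sm} to this extruded graph yields
\begin{equation*}
	S_\mcat\bigl(a, \opcat{a'}\bigr) \cong \homsetin{\nodecat}{a \cdot \ewid{\mcat} \cdot a'}{U(x)},
\end{equation*}
so that at $\bigl(\one, \opcat{\one}\bigr)$ we recover $\preblock = \homsetin{\nodecat}{\ewid{\mcat}}{U(x)}$, in accordance with~\eqref{eq_preblock_from_bibal_func}.

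The main step is to show that under this identification the bi-balanced structure of $S_\mcat$ coincides with the natural bi-balanced structure of the Hom-functor $\homsetin{\nodecat}{-}{-}$ of the $\acat$-$\acat$-bimodule category $\nodecat$, evaluated on the pair $(\zewid{\mcat}, x)$. The right balancing of $S_\mcat$ is, by construction, inherited verbatim from the balancing of the silent object $\zewid{\mcat}$, which in turn defines the Hom-balancing in the first slot at $\zewid{\mcat}$. The left balancing is the ``holonomy'' diagram~\eqref{eq_def_holbal}; in the lasso case it collapses dramatically, because there is only one defect line and one node, so that ``walking around the boundary of $\acat$'' reduces to a single pass across the silent strand combined with the balancing structure of the node label $x$. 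Matching this composition with the Hom-balancing induced by the balancing of $x \in \raycat$ is the main obstacle, since~\eqref{eq_def_holbal} is written in the full generality of many defect lines and nodes; I expect to handle it by tracking matrix elements via $\soc$ as in~\eqref{eq_sil} and using that all intermediate balancings in~\eqref{eq_def_holbal} are trivial here.

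Once this identification is in place, Example~\ref{ex_centerhom_is_equalizer} immediately gives
\begin{equation*}
	\blockspace = \baleqat{S_\mcat}{\one}{\one} \cong \mathrm{eq}\,\homsetin{\nodecat}{-}{-}(\zewid{\mcat}, x) \cong \homsetin{\modulecent{\acat}{\nodecat}}{\zewid{\mcat}}{x} = \homsetin{\raycat}{\zewid{\mcat}}{x},
\end{equation*}
which is~\eqref{eq_lasso_block}. For the second assertion, Proposition~\ref{lem_speqs_of_bibals} identifies $\holidem$ with the idempotent $h$ of~\eqref{eq_idemp}; under the identification of $S_\mcat$ with the Hom-functor of $\nodecat$ established above, this formula specializes verbatim to the explicit expression~\eqref{eq_homspace_projection} for $r$. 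Corestricting along~\eqref{eq_def_holproj} then gives $\holsurj = r$, as claimed.
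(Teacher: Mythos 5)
Your reduction is essentially the paper's: both claims follow once the holonomy idempotent $\holidem$ on $\preblock = \homsetin{\nodecat}{\ewid{\mcat}}{U(x)}$ is identified with the idempotent \eqref{eq_homspace_projection} of Lemma~\ref{lemma_homspace_projection}, which is Proposition~\ref{lem_speqs_of_bibals} specialized to the Hom-functor of $\nodecat$ as in Example~\ref{ex_centerhom_is_equalizer}. The paper states this as an equality of idempotents and proves it by direct computation; you phrase it as an identification of $S_\mcat$, with its two balancings, with the Hom-functor of $\nodecat$ evaluated at $(\zewid{\mcat},x)$, which amounts to the same thing.

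The gap is that the only substantive step -- matching the left (holonomy) balancing \eqref{eq_def_holbal} of $S_\mcat$ with the Hom-balancing combined with the half-braiding of $x$ -- is precisely what you defer (``the main obstacle \dots\ I expect to handle it''), and it is the entire content of the paper's proof. There one takes $g\in\preblock$ and unravels Definition~\ref{def_holidem} step by step: the $\frac{1}{\catdim{\acat}}$-weighted evaluation, the balancing of $\zewid{\mcat}$, a Hom-balancing, the half-braiding of $x$, another Hom-balancing, and the coevaluation (Figure~\ref{calc_lasso_longcalc}), arriving at
\begin{equation*}
	\holidem(g)=\sum_{\si{a}}\frac{\pdim{\si{a}}}{\catdim{\acat}}\,\brev{x}{\si{a}^*}\circ\bigl(\si{a}^*\,g\,\si{a}\bigr)\circ\cobrev{\zewid{\mcat}}{\si{a}},
\end{equation*}
which is exactly \eqref{eq_homspace_projection}; both statements then follow, since the image of $\holidem$ is by definition the block space and its corestriction is $\holsurj$. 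Your sketch of how to fill this in (tracking $\soc$-components and using that with a single node and defect line the ``several balancings'' stages of \eqref{eq_def_holbal} are absent) is plausible and is indeed how the computation goes, but as written it is an announced plan rather than a proof. Note also that to conclude $\holsurj=r$ it does not suffice to match the parallel pairs of the two equalizer diagrams: you must check that the idempotents $h=t\circ g$ (equivalently the splittings $t$ furnished by the spherical structure) agree under your identification, which is again the same computation as \eqref{eq_loop_holidem_result}.
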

\begin{proof}
	Both claims of the lemma follow immediately if we can show that the holonomy idempotent $h : \preblock \to \preblock$ as defined in Definition~\ref{def_holidem} is equal to the idempotent from \eqref{eq_homspace_projection}.
	Concretely, we must prove
	\begin{equation}\label{eq_loop_holidem_result}
		h = \sum_\si{a} \frac{\pdim{\si{a}}}{\catdim{\acat}} \brev{x}{\si{a}^*} \circ (\si{a}^*  -  \si{a}) \circ \cobrev{\zewid{\mcat}}{\si{a}} = \sum_\si{a} \frac{\pdim{\si{a}}}{\catdim{\acat}} \pica{1}{pic_loopcalc_7}.
	\end{equation}

	Starting with an element $g \in \preblock = \homsetin{\nodecat}{\ewid{\mcat}}{U(x)}$, we unravel Definition~\ref{def_holidem} and calculate its image $\holidem(g)$ under the holonomy idempotent $\holidem$.
	Since $\holidem$ is a composition of several maps, the calculation is broken down into intermediate steps.
	The morphisms composing to $\holidem$ are given vertically in the left column of Figure~\ref{calc_lasso_longcalc}. The right column contains the images of $g$ under the partial compositions of the morphisms in graphical notation. In particular, the bottom-most entry is equal to $\holidem(g)$ (see Figure~\ref{calc_lasso_longcalc}). From the bottom-most row, we can read off the expression for $h$, and recognize that we have obtained \eqref{eq_loop_holidem_result}.
\end{proof}

	\begin{figure}[t]\centering

	\begin{tikzcd}[row sep=4mm, column sep=20mm]
	\homset{\ewid{\mcat}}{U(x)} \arrow[d, "{\frac{1}{\catdim{\acat}}}  \text{evaluation}"]&
	g \arrow[d, mapsto] \\
	\bigoplus_\si{a} \homset{\si{a} \si{a}^* \ewid{\mcat}}{U(x)} \arrow[d, "{\text{balancing of } \zewid{\mcat}}"]&
	\bigoplus_\si{a} \frac{\pdim{\si{a}}}{\catdim{\acat}} \pica{0.8}{pic_loopcalc_1} \arrow[d, mapsto]
	\\
	\bigoplus_\si{a} \homset{ \si{a} \ewid{\mcat} \si{a}^*}{U(x)} \arrow[d, "{\text{Hom-balancing}}"]&
	\bigoplus_\si{a} \frac{\pdim{\si{a}}}{\catdim{\acat}} \pica{0.8}{pic_loopcalc_2} \arrow[d, mapsto]
	 \\
	\bigoplus_\si{a} \homset{\si{a} \ewid{\mcat}}{U(x)\si{a}} \arrow[d, "{\text{balancing of }x}"]&
	\bigoplus_\si{a} \frac{\pdim{\si{a}}}{\catdim{\acat}} \pica{0.8}{pic_loopcalc_3} \arrow[d, mapsto]
	 \\
	\bigoplus_\si{a} \homset{\si{a} \ewid{\mcat}}{\si{a}U(x)} \arrow[d, "{\text{Hom-balancing}}"]&
	\bigoplus_\si{a} \frac{\pdim{\si{a}}}{\catdim{\acat}} \pica{0.8}{pic_loopcalc_4} \arrow[d, mapsto]
	 \\
	\bigoplus_\si{a} \homset{\si{a}^* \si{a} \ewid{\mcat}}{U(x)} \arrow[d, "{\text{coevaluation}}"]&
	\bigoplus_\si{a} \frac{\pdim{\si{a}}}{\catdim{\acat}} \pica{0.8}{pic_loopcalc_5} \arrow[d, mapsto]
	 \\
	\homset{\ewid{\mcat}}{U(x)} &
	\sum_\si{a} \frac{\pdim{\si{a}}}{\catdim{\acat}} \pica{0.8}{pic_loopcalc_6}.
	\end{tikzcd}
\caption{}\label{calc_lasso_longcalc}
	\end{figure}

The expression \eqref{eq_lasso_block} for the block space of a lasso graph is in agreement with \cite[Example~4.33]{fss-statesum}.
\begin{Remark}
	The contents of Lemma~\ref{lem_elemgraph_generalformula} generalize to the case where $\bcat$ is not necessarily $\vect$.
	In particular, the general formula for the holonomy idempotent $\holidem$ is given by
	\begin{equation*}
		\holidem(g) = \sum_{\si{a},\si{b}} \frac{\pdim{\si{a}}\pdim{\si{b}}}{\catdim{\acat}\catdim{\bcat}} \brev{x}{\si{a}^* \deligne \si{b}^*} \circ ((\si{a}^* \deligne \si{b}^*) g (\si{a} \deligne \si{b})) \circ \cobrev{\zewid{\mcat}}{\si{a}\deligne \si{b}}.
	\end{equation*}
\end{Remark}

Lemma~\ref{lem_elemgraph_generalformula} is a step towards proving Theorem~\ref{thm_elementary_graph}.
The remainder of the proof is technical, and can be found in Appendix~\ref{app_loopgraphs}.
Here we continue with an application of Corollary~\ref{cor_loop_eval_scalar}, which is a formula for the evaluation of lasso graphs.

\begin{Proposition}\label{move_SV}
	Let $Q$ be a lasso graph with labels as in Figure~{\rm \ref{pic_lassograph}}.
Moreover, let
	\begin{itemize}\itemsep=0pt
		\item $f \in \nodespace = \homset{m \deligne\opcat{m}}{U(x)}$ be a node label,
		\item $\tilde{\varphi} \in \homsetin{\raycat}{x}{\zewid{\mcat}}$ be a morphism, and
		\item $\varphi \in \blockspace^*$ be the corresponding linear form on the block space defined by $\varphi := \trace{\raycat}(\tilde{\varphi} \circ -) \in \blockspace^* = \homset{\zewid{\mcat}}{x}^*$.
	\end{itemize}
	Then
$(\varphi, \evaluate{Q} f ) = \trace{\mcat}\bigl(\soc^{-1}(\tilde{\varphi} \circ f)\bigr) \in \field$.
\end{Proposition}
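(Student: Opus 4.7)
The plan is to reduce this proposition to the loop-graph case treated in Corollary~\ref{cor_loop_eval_scalar} by absorbing the ray-category morphism $\tilde{\varphi}$ into the node label. By Definition~\ref{def_eval}, $\evaluate{Q} f = \holsurj(f \circ \tau_m)$, and Lemma~\ref{lem_elemgraph_generalformula} identifies the projection $\holsurj$ for a lasso graph with the canonical retraction $r$ from Lemma~\ref{lemma_homspace_projection}. Since $\tilde{\varphi}\colon x\to \zewid{\mcat}$ is a morphism in the ray category $\raycat$, the sesquilinearity relation \eqref{eq_retraction_sesquimult} applies and yields
\begin{equation*}
	\tilde{\varphi} \circ \evaluate{Q} f \;=\; \tilde{\varphi} \circ r(f \circ \tau_m) \;=\; r\bigl(U(\tilde{\varphi}) \circ f \circ \tau_m\bigr).
\end{equation*}

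Let $O$ denote the loop graph obtained from $Q$ by replacing the node label $x$ by the silent object $\zewid{\mcat}$ while keeping the defect-line label $m$. The composite $U(\tilde{\varphi}) \circ f$ lies in $\homsetin{\nodecat}{m \deligne \opcat{m}}{\ewid{\mcat}}$ and is therefore a legitimate node label for $O$. Since the map $\delta^{O}$ for the loop graph is likewise precomposition with $\tau_m$, and Lemma~\ref{lem_elemgraph_generalformula} again identifies the projection $\holsurj^{O}$ with the retraction $r$, the right-hand side of the previous display is exactly $\evaluate{O}\bigl(U(\tilde{\varphi}) \circ f\bigr)$. Setting $g := \soc^{-1}\bigl(U(\tilde{\varphi}) \circ f\bigr) = \soc^{-1}(\tilde{\varphi} \circ f)$, Corollary~\ref{cor_loop_eval_scalar} then gives
\begin{equation*}
	(\varphi,\, \evaluate{Q} f) \;=\; \trace{\raycat}\bigl(\tilde{\varphi} \circ \evaluate{Q} f\bigr) \;=\; \trace{\raycat}\bigl(\evaluate{O}(\soc(g))\bigr) \;=\; \trace{\mcat}\bigl(\soc^{-1}(\tilde{\varphi} \circ f)\bigr),
\end{equation*}
where the last equality is Corollary~\ref{cor_loop_eval_scalar} combined with the identification $\theta = \catdim{\acat}\catdim{\bcat}\,\trace{\raycat}$ on the dangling center $\mcat \reldel \opcat{\mcat} \reldel$.

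The main obstacle, beyond unwinding the definitions, is the bookkeeping of the categorical-dimension factors $\catdim{\acat}\catdim{\bcat}$ which separate $\theta$ from the trace $\trace{\raycat}$ on the center (compare Theorem~\ref{thm_elementary_graph} and Lemma~\ref{lem_center_calabiyau}); once the normalization of $\varphi$ is matched with that of Corollary~\ref{cor_loop_eval_scalar}, the rest of the argument is the pattern ``the retraction $r$ commutes with left-composition by morphisms coming from $\raycat$,'' applied once.
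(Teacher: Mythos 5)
Your proposal is correct and follows essentially the same route as the paper's own proof: express $(\varphi,\evaluate{Q}f)$ as $\trace{\raycat}\bigl(\tilde\varphi\circ\holsurj(\delta(f))\bigr)$, pull $\tilde\varphi$ inside the projection via \eqref{eq_retraction_sesquimult}, recognize $\tilde\varphi\circ\delta(f)=\delta(\tilde\varphi\circ f)$ as the evaluation datum of the loop graph $O$, and conclude with Corollary~\ref{cor_loop_eval_scalar}. The only cosmetic difference is that you invoke Lemma~\ref{lem_elemgraph_generalformula} explicitly to identify $\holsurj$ with the retraction of Lemma~\ref{lemma_homspace_projection} before applying \eqref{eq_retraction_sesquimult}, and your bookkeeping of the $\catdim{\acat}\catdim{\bcat}$ normalization when matching $\trace{\raycat}$ with $\theta$ is handled exactly as in the paper's proof.
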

\begin{proof}
	The proof is a direct calculation. In the second-to-last step, the loop graph $O$ from Theorem~\ref{thm_elementary_graph} appears,
	\begin{align*}
			 (\varphi, \evaluate{Q} f ) &= (\varphi \circ \holsurj \circ \delta)(f) = \trace{\raycat} (\tilde{\varphi} \circ \holsurj (\delta (f)) ) \eqwithref{eq_retraction_sesquimult} \trace{\raycat} (\holsurj (\tilde{\varphi} \circ \delta (f)) ) \\
&\eqwithref{eq_eval_delta} \trace{\raycat} (\holsurj (\tilde{\varphi} \circ f \circ \tau_m) ) = \trace{\raycat} (\holsurj (\delta (\tilde{\varphi} \circ f)) ) \eqwithref{eq_dualoftheta} \frac{1}{\catdim{\acat}\catdim{\bcat}}\theta \circ \holsurj \circ \delta (\tilde{\varphi} \circ f) \\
&= \frac{1}{\catdim{\acat}\catdim{\bcat}} (\theta,\evaluate{O} (\tilde{\varphi} \circ f) ) \hspace{7mm}\eqdesc{\text{Corollary~\ref{cor_loop_eval_scalar}}}\hspace{7mm} \trace{\mcat} \bigl(\soc^{-1}(\tilde{\varphi} \circ f) \bigr).\tag*{\qed}
	\end{align*}\renewcommand{\qed}{}
\end{proof}

\section{Moves of invariance}\label{sec_moves}
Moves are certain transformations of an extruded graph, changing both its topological and algebraic data.
They make the abstractly defined evaluation procedure algorithmically computable.

\subsection{Overview of moves} \label{sec_moves_overview}
A move $\mathbf{M}$ can be performed on a disk-shaped subset $\sigsurf$ of an extruded graph, and results in another extruded graph, which is obtained by replacing the region $\sigsurf$ with another disc region $\postmove{\sigsurf}$.
We require that the boundaries of the discs intersect
the respective extruded graphs in a generic position, pictured, e.g., in Figures~\ref{pic_moveOR} and~\ref{pic_moveC}.
Each move $\mathbf{M}$ can only be performed on disk regions $\sigsurf$ which have a geometry specific to $\mathbf{M}$.
The data of a move are then given by:
\begin{itemize}\itemsep=0pt
	\item An assignment $\sigsurf \mapsto \postmove{\sigsurf}$ describing the local change in geometry.
	\item A prescription to determine the algebraic labels of $\postmove{\sigsurf}$ from the algebraic labels of $\sigsurf$.
	\item A linear map $\movemap \colon \nodespace \to \postmove{\nodespace}$ from the node space $\nodespace$ of $\sigsurf$ to the node space $\postmove{\nodespace}$ of $\postmove{\sigsurf}$.
\end{itemize}
While the regions $\sigsurf$ and $\postmove{\sigsurf}$ have well-defined node spaces in the sense of \eqref{eq_nodespace}, they do not have block- or pre-block spaces.
When in the following, we make a statement about ``the pre-block space of $\sigsurf$'', or similar, we really mean: Consider an extruded graph $\sigsurf_T$ which contains the region $\sigsurf$. Then the statement holds for the pre-block space of $\sigsurf_T$, for any choice of $\sigsurf_T$.

We proceed to define two more linear maps
\begin{gather}
	\Pmovemap \colon\ \preblock \to \postmove{\preblock}, \qquad \Pmovemap := \bigoplus_\si{m} \postmove{\delta}_\si{m} \circ \movemap_\si{m}, \quad\text{ and} \label{eq_Pmovemap} \\
	 \Movemap \colon\ \blockspace \to \postmove{\blockspace}, \qquad \Movemap := \holsurj \circ \Pmovemap \circ \iota \label{eq_Movemap}
\end{gather}
between pre-block spaces and block spaces.
The direct sum in \eqref{eq_Pmovemap} runs over simple object labels $\si{m}$ for the defect lines of $\sigsurf$.
For each such labeling, the move $\mathbf{M}$ provides a linear map between the appropriate node spaces, which is called $\movemap_\si{m} \colon \nodespace_\si{m} \to \postmove{\nodespace}_\si{m}$ in \eqref{eq_Pmovemap}.
The linear map~${\postmove{\delta_\si{m}} \colon \postmove{\nodespace}_\si{m} \to \postmove{\preblock}}$ is the one that appears in the definition of the evaluation \eqref{eq_def_eval_shortform}.
Equation~\eqref{eq_Movemap} involves the morphisms of the retraction
\[\begin{tikzcd}
	\blockspace \arrow[r, "\iota"{below}] & \preblock. \arrow[l, "\holsurj"{above}, bend right=20]
\end{tikzcd}
\]
The map $\Movemap$ is needed for the following definition.

\begin{Definition}\label{def_invariance}
	We call a move $\mathbf{M}$ between regions $\sigsurf$ and $\postmove{\sigsurf}$ with linear map $\movemap$ on the node spaces an \emph{invariance} or a \emph{move of invariance} iff the diagram
	\begin{equation}\label{diag_movediag}
		\begin{tikzcd}
			\nodespace \arrow[r, "\evaluate{\sigsurf_{T}}"{above}] \arrow[d, "\movemap"{left}] & \blockspace \arrow[d, "\Movemap"] \\
			\postmove{\nodespace} \arrow[r, "\evaluate{\postmove{\sigsurf_{T}}}"{below}] & \postmove{\blockspace}
		\end{tikzcd}
	\end{equation}
	commutes for every extruded graph $\sigsurf_{T}$ which contains the region $\sigsurf$. Here, $\postmove{\sigsurf_{T}}$ denotes the extruded graph obtained from $\sigsurf_{T}$ by replacing the region $\sigsurf$ with $\postmove{\sigsurf}$.
\end{Definition}

We now define a set of moves, wherein we distinguish between \emph{elementary moves} and \emph{composite moves}.
For the elementary moves, we show the invariance property using the definition of the evaluation procedure $\evaluate{-}$. The composite moves are combinations of elementary moves.

\begin{Remark}
	For all the moves in our list, the linear map $\Movemap$ is an isomorphism.
\end{Remark}

We consider the following elementary moves.

\subsubsection{Elementary moves}
{\bf $\moveOR$ -- orientation reversal.} 
One of the most basic changes one can make to an extruded graph is the flipping of a defect line's orientation (see Figure~\ref{pic_moveOR}).
In the $\moveOR$-move, a defect line labeled by an object $m$ in an $\acat$-$\bcat$-bimodule category $\mcat$ is replaced by a defect line with opposite orientation.
This new defect line needs a $\bcat$-$\acat$-bimodule category for a label.
For this we choose the opposite category $\opcat{\mcat}$, and as the post-move object label we take $\opcat{m} \in \opcat{\mcat}$.
Note that $\opcat{\mcat}$ is unambiguously defined only because $\acat$ and $\bcat$ are equipped with pivotal structures.

\begin{figure}[ht]\centering
	\includegraphics{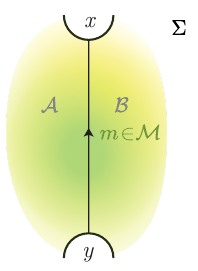} \ \raisebox{20mm}{$\xrightarrow{\moveOR}$} \ \includegraphics{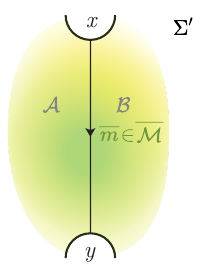}
\caption{}\label{pic_moveOR}
\end{figure}

We denote the node labels of the nodes adjacent to the $\mcat$-labeled defect line by $x$ and $y$.
In the definition of ray categories, a distinction is made between incoming and outgoing defect lines at a certain node.
This determines whether the bimodule category that labels the defect line is used, or its opposite.
Due to this, the ray categories (and the node categories) before and after the move are identical -- up to the identification \smash{$\opcat{\opcat{\mcat}} \cong \mcat$}, which becomes an equality for strict pivotal structures on $\acat$ and $\bcat$.
Hence, there is a canonical identification of the node spaces before and after the move, defining the map
$\movemap \colon \nodespace \to \postmove{\nodespace}$.

The proof that the move $\moveOR$ is a move of invariance is the content of Appendix~\ref{move_OR_pf}.

{\bf $\moveC$ -- contraction.} \label{move_C}
The contraction move allows us to transform two nodes, connected by a~defect line, into a single node, thereby dissolving the defect line (see Figure~\ref{pic_moveC}).
The move~$\moveC$ takes place inside a disk-shaped region $\sigsurf$ with two nodes and three defect lines, labeled by objects $k$, $m$ and $l$ in traced $\acat$-$\bcat$-bimodule categories $\kcat$, $\mcat$ and $\lcat$.
The nodes are labeled by $x \in \kcat\reldel\opcat{\mcat}\reldel$ and $y\in \mcat \reldel \opcat{\lcat}\reldel$.

\begin{figure}[ht]\centering
	\includegraphics{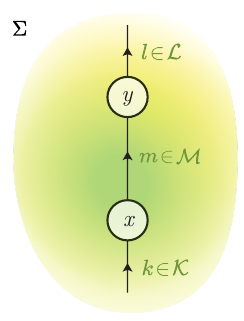} \ \raisebox{25mm}{$\xrightarrow{\moveC}$} \ \includegraphics{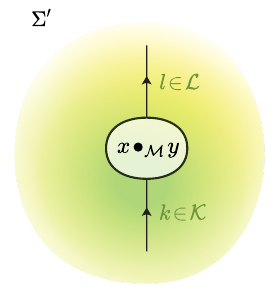}
\caption{}\label{pic_moveC}
\end{figure}

After the move, $\sigsurf$ is replaced by the similar region $\postmove{\sigsurf}$ which only has one node, and from which the defect line labeled by $m$ has disappeared.
To obtain the node label of $\postmove{\sigsurf}$ from the node labels $x$ and $y$ of $\sigsurf$,
recall that the module Eilenberg--Watts equivalence \eqref{eq_mew}
allows us to express the ray categories of $\sigsurf$ and $\postmove{\sigsurf}$ as categories of bimodule functors
\begin{equation*}
	\kcat\reldel\opcat{\mcat}\reldel \cong \bimodfun{\acat}{\bcat}{\mcat}{\kcat}, \qquad \mcat\reldel\opcat{\lcat}\reldel \cong \bimodfun{\acat}{\bcat}{\lcat}{\mcat}, \qquad \kcat\reldel\opcat{\lcat}\reldel \cong \bimodfun{\acat}{\bcat}{\lcat}{\kcat}.
\end{equation*}
The composition of functors defines a composition operation for these particular ray categories, here denoted by the node label $x \mitosis{\mcat} y \in \kcat\reldel\opcat{\lcat}\reldel$ for $\postmove{\sigsurf}$.
Explicitly, we have
\begin{equation}\label{eq_def_gencomp_U}
	U(x \mitosis{\mcat} y) = \homsetin{\mcat}{\sweed{x}{\mcat}}{\sweed{y}{\mcat}} \otimes  \sweed{x}{\kcat} \deligne \sweed{y}{\opcat{\lcat}} \in \kcat \deligne \opcat{\lcat}.
\end{equation}

The last datum needed to specify the move is the linear map
\begin{equation*}
	\movemap \colon\ \nodespace = \homset{k \deligne \opcat{m}}{U(x)} \otimes \bigl\langle m \deligne \opcat{l},U(y)\bigr\rangle \to \bigl\langle k \deligne\opcat{l},U(x\mitosis{\mcat} y)\bigr\rangle = \postmove{\nodespace}
\end{equation*}
between node spaces, appearing in Definition~\ref{def_invariance}.
For this move, $\movemap$ is given by the following composition of morphisms
\begin{gather}
		 \homset{k \deligne \opcat{m}}{U(x)} \otimes \bigr\langle m \deligne \opcat{l},U(y)\bigr\rangle\nonumber\\
\qquad		\cong \homset{k}{\sweed{x}{\kcat}} \otimes \homset{\opcat{m}}{\sweed{x}{\opcat{\mcat}}} \otimes \homset{m}{\sweed{y}{\mcat}} \otimes \bigr\langle\opcat{l},\sweed{y}{\opcat{\lcat}}\bigr\rangle \nonumber\\
\qquad	
		= \homset{k}{\sweed{x}{\kcat}} \otimes \homset{\sweed{x}{\mcat}}{m} \otimes \homset{m}{\sweed{y}{\mcat}} \otimes \bigr\langle\opcat{l},\sweed{y}{\opcat{\lcat}}\bigr\rangle \nonumber \\
	\qquad	\xrightarrow{\text{compose}} \homset{k}{\sweed{x}{\kcat}} \otimes \homset{\sweed{x}{\mcat}}{\sweed{y}{\mcat}} \otimes \bigr\langle\opcat{l},\sweed{y}{\opcat{\lcat}}\bigr\rangle \nonumber \\
	\qquad	\cong \bigr\langle k\deligne \opcat{l},\homsetin{\mcat}{\sweed{x}{\mcat}}{\sweed{y}{\mcat}} \otimes  \sweed{x}{\kcat} \deligne \sweed{y}{\opcat{\lcat}}\bigr\rangle= \bigr\langle k\deligne \opcat{l},U(x \mitosis{\mcat} y)\bigr\rangle.\label{eq_gencomp_def}
\end{gather}

\begin{figure}[ht]\centering
	\includegraphics{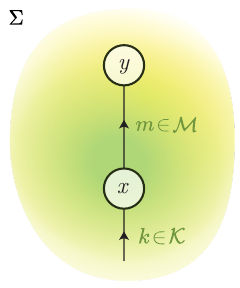} \ \raisebox{20mm}{$\xrightarrow{\moveC}$} \ \includegraphics{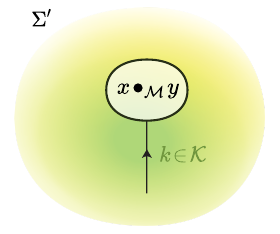}
\caption{}\label{pic_moveC_oneside}
\end{figure}

There is also a ``one-sided'' version of the $\moveC$-move, where the $\lcat$-labeled defect line (or alternatively, the $\kcat$-labeled defect line) is not present (see Figure~\ref{pic_moveC_oneside}).
%
The composition operation for node labels is well defined even in this case.
Note however that a~move without $\kcat$ \emph{and} $\lcat$ does not exist, as this would result in a node without any defect lines, which is not allowed.

The proof that the move $\moveC$ is a move of invariance is the content of Appendix~\ref{move_C_pf}.

\begin{Remark}\label{rem_cmove_specializes_to_comp}
	A special case we are interested in is that where $\mcat = \lcat = \kcat$ and $x = y ={} \zewcoid{\mcat}$.
	In this case, the node spaces are
$\nodespace \cong \homset{k}{m} \otimes \homset{m}{l}$ and $\postmove{\nodespace} \cong \homset{k}{l}$,
	and the linear map $\psi$ specializes to the composition of morphisms in $\mcat$.
\end{Remark}

{\bf $\moveEF$ -- edge fusion.} \label{move_EF}
The edge fusion move allows us to merge parallel defect lines which share start and end nodes (see Figure~\ref{pic_moveEF}).
The region $\sigsurf$ is disk-shaped and contains two defect lines, running in parallel between the only two nodes that lie partially in $\sigsurf$.
In contrast, the post-move region $\postmove{\sigsurf}$ features just a single defect line connecting the node segments.
The three domains of $\sigsurf$ are labeled by spherical fusion categories~$\acat$,~$\bcat$ and $\ccat$, and the defect lines are labeled by objects in bimodule categories ${m \in \bimod{\acat}{\mcat}{\bcat}}$ and $n \in \bimod{\bcat}{\ncat}{\ccat}$, respectively.
The nodes are appropriately labeled by~${x \in \cdots \reldel \opcat{\ncat} \reldel \opcat{\mcat} \reldel \cdots}$ and~${y \in \cdots \reldel \mcat \reldel \ncat \reldel \cdots}$.
The single defect line in the region $\postmove{\sigsurf}$ is labeled by the $\acat$-$\ccat$-bimodule category $\mcat \reldel \ncat$.
The object label $m\lexreldel n$ for the defect line involves the induction functor, see \eqref{eq_induction_reldel_objects}.
The ray categories $\raycat$ and $\postmove{\raycat}$ associated with $\sigsurf$ and $\postmove{\sigsurf}$ are canonically equivalent to those of the pre-move surface $\sigsurf$, allowing us to use the same objects $x$ and $y$ for the rays of $\postmove{\sigsurf}$.

\begin{figure}[ht]\centering
	\includegraphics{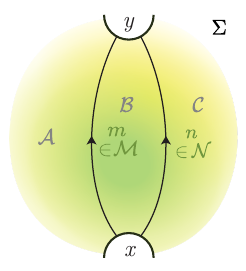} \ \raisebox{23mm}{$\xrightarrow{\moveEF}$} \ \includegraphics{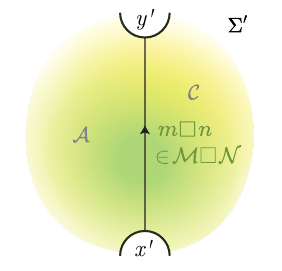}
\caption{}\label{pic_moveEF}
\end{figure}

However, the node categories $\nodecat$ and $\postmove{\nodecat}$ are different; they are related by the forgetful functor $U_\bcat\colon \postmove{\nodecat} \to \nodecat$, which forgets the $\bcat$-balancing.
Together with the forgetful functors~${\postmove{U}\colon \postmove{\raycat} \to \postmove{\nodecat}}$ and $U \colon \raycat \to \nodecat$, we have $U = U_\bcat \circ \postmove{U}$.

The linear map $\movemap \colon \nodespace \to \postmove{\nodespace}$ between node spaces is given by the composition
\begin{align}
		\tilde{\movemap}  \colon\  \nodespace &= \homset{\opcat{n}\deligne\opcat{m}\deligne \cdots}{U(x)} \otimes \homset{m\deligne n\deligne \cdots}{U(y)}\nonumber \\
		& \cong \bigl\langle\opcat{m \lexreldel n} \deligne \cdots,\postmove{U}(x)\bigr\rangle \otimes \homset{m \rexreldel n \deligne \cdots}{\postmove{U}(y)} = \postmove{\nodespace},\label{eq_moveEF_movemap_def}
\end{align}
scaled by the factor $\catdim{\bcat}$, meaning \smash{$\psi := \frac{\tilde{\psi}}{\catdim{\bcat}}$}.
In \eqref{eq_moveEF_movemap_def} the ambidextrous adjunction $U_\bcat \dashv \leftind$ from~\eqref{eq_induction_adjunction} was used.

The proof that the move $\moveEF$ is a move of invariance is the content of Appendix~\ref{move_EF_pf}.

\subsubsection{Composite moves}
In addition to these elementary moves, we will define the following composite moves $\moveDV$, $\moveL$, and~$\moveDE$.
The loop move $\moveL$ removes a defect line that starts and ends at the same node, and the other two moves $\moveDV$ and $\moveDE$ allow us to insert nodes and defect lines which are, in an appropriate sense, transparent.

{\bf ${\moveDV}$ -- silent node.} 
The $\moveDV$-move can be performed on a region $\sigsurf$ containing only a defect line, labeled by some object $m \in \mcat$.
In the post-move region $\postmove{\sigsurf}$, the defect line is split in two by a new node, which is labeled by the silent object $\zewid{\mcat} \in \opcat{\mcat} \reldel \mcat \reldel$ (see Figure~\ref{pic_moveDV}).
Since the region has no node, the node space of $\sigsurf$ is $\nodespace = \field$, and the linear map $\psi \colon \nodespace \to \postmove{\nodespace}$ is specified by the vector
$\psi (1) := \soc (\id_m) \in \homset{\opcat{m} \deligne m}{\ewid{\mcat}} = \postmove{\nodespace}$.

\begin{figure}[ht]\centering
	\raisebox{8mm}{\includegraphics{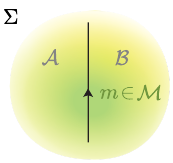}} \ \raisebox{20mm}{$\xrightarrow{\moveDV}$} \ \includegraphics{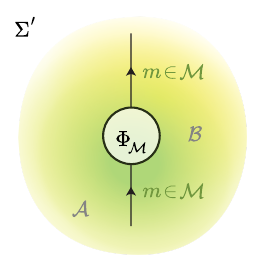}
\caption{}\label{pic_moveDV}
\end{figure}

The proof that the move $\moveDV$ is a move of invariance is the content of Appendix~\ref{move_DV_pf}.

\begin{figure}[ht]\centering
\includegraphics{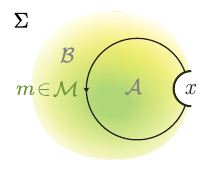} \ \raisebox{13mm}{$\xrightarrow{\moveL}$} \ \includegraphics{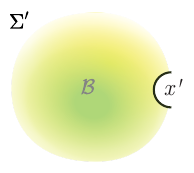}
\caption{}\label{pic_moveL}
\end{figure}

{\bf ${\moveL}$ -- loop move.} 
A loop attached to a node can be removed. Note that this is only possible when at least one other defect line is attached to the node, as otherwise, the move would result in a node without defect lines, which is not allowed (see Figure~\ref{pic_moveL}).
The node label $x'$ is given by the contraction $x' ={} \zewid{\mcat} \mitosis{\opcat{\mcat}\reldel \mcat} x$.
Beware that in general, $x$ and~$x'$ live in non-equivalent ray categories -- the silent object $\zewid{\mcat}$ is a unit with respect to the contraction~$\mitosis{\mcat}$, not $\mitosis{\opcat{\mcat}\reldel \mcat}$.
Concretely, if $U(x) = \sweed{x}{\opcat{\mcat} \reldel \mcat} \deligne \sweed{x}{\rest}$, where $\sweed{x}{\rest}$ is a rest term, we have
\begin{align}\label{eq_lmove_postmove_raylabel}
		U(x') &= \homsetin{\opcat{\mcat}\reldel \mcat}{\sweed{x}{\opcat{\mcat}\reldel \mcat}}{\zewcoid{\mcat}}\otimes \sweed{x}{\rest} .
\end{align}
Thus, we can specify the linear map $\movemap \colon \nodespace \to \postmove{\nodespace}$ on $f \in \nodespace$ as $\movemap(f) \colon n \to \homsetin{\opcat{\mcat}\reldel \mcat}{\sweed{x}{\opcat{\mcat}\reldel \mcat}}{\zewcoid{\mcat}} \otimes \sweed{x}{\rest}$, where $n$ stands for the labels of the other defect lines attached to the node.
Let us write $f = \sweed{f}{\opcat{\mcat} \deligne \mcat} \otimes \sweed{f}{\rest}$, where
\begin{equation*}
	\sweed{f}{\opcat{\mcat} \deligne \mcat} \colon\ \opcat{m} \deligne m \to \sweed{x}{\opcat{\mcat} \deligne \mcat} \qquad\text{and} \qquad \sweed{f}{\rest} \colon\ n \to \sweed{x}{\rest}.
\end{equation*}
Then $\movemap(f)$ is given as
\begin{equation*}
	\movemap(f) := r(\soc(\id_m) \circ \sweed{f}{\opcat{\mcat} \deligne \mcat}) \otimes \sweed{f}{\rest},
\end{equation*}
where the isomorphism $\soc \colon \homset{m}{m} \cong \homset{\opcat{m}\deligne m}{\ewid{\mcat}}$ was defined in \eqref{eq_sil}, and
\[
r\colon\ \homsetin{\opcat{\mcat}\deligne \mcat}{\sweed{x}{\opcat{\mcat}\deligne \mcat}}{\ewid{\mcat}} \to \homsetin{\opcat{\mcat}\reldel \mcat}{\sweed{x}{\opcat{\mcat}\reldel \mcat}}{\zewcoid{\mcat}}\]
 is the retraction from Lemma~\ref{lemma_homspace_projection}.

The proof that the move $\moveL$ is a move of invariance is the content of Appendix~\ref{move_L_pf}.

\begin{figure}[ht]\centering
\includegraphics{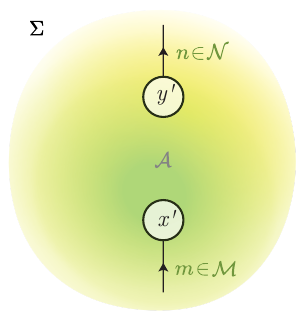} \ \raisebox{25mm}{$\xrightarrow{\moveDE}$} \ \includegraphics{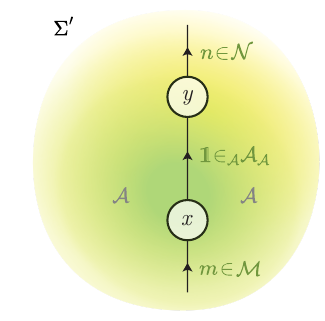}
\caption{}\label{pic_moveDE}
\end{figure}

{\bf ${\moveDE}$ -- transparent edge.} \label{move_DE}
An edge which is \emph{transparent} -- that is, labeled by the regular bimodule category $\acat$, and moreover labeled by the monoidal unit $\one \in \acat$, can be inserted between two nodes as shown in Figure~\ref{pic_moveDE}.
More precisely, let $x \in \modulecent{\acat}{\mcat}$ and $y \in Z_\acat\bigl(\opcat\ncat\bigr)$ be objects in the centers of the $\acat$-$\acat$-bimodule categories $\mcat$ and $\opcat\ncat$.
As such, $x$ and $y$ are appropriate node labels for the nodes of the region~$\sigsurf$.
After the move, the rays of the region $\postmove{\sigsurf}$ must be labeled with objects in the ray categories~${\opcat{\acat} \reldel \mcat \reldel = Z_\acat\bigl(\opcat{\acat} \reldel \mcat\bigr)}$ and $\opcat{\ncat} \reldel \acat \reldel = Z_\acat\bigl(\opcat{\ncat}\reldel \acat\bigr)$.
We denote the (partial) forgetful functors $Z_\acat\bigl(\opcat{\acat} \reldel \mcat\bigr) \to \opcat{\acat} \reldel \mcat$ and $Z_\acat\bigl(\opcat{\ncat}\reldel \acat\bigr) \to \opcat{\ncat}\reldel \acat$ by \smash{$\tilde{U}$}.
The equivalence of bimodule categories $\opcat{\ncat} \cong \opcat{\ncat} \reldel \acat$ from \eqref{eq_reldel_unitality} extends to an equivalence of linear categories ${Z_\acat\bigl(\opcat\ncat\bigr) \cong Z_\acat\bigl(\opcat{\ncat}\reldel \acat\bigr)}$ because $\modulecent{\acat}{-}$ is a 2-functor from bimodule categories to linear categories \cite[Proposition~3.7]{fss-trace}.
The node label $y$ in the region $\postmove{\sigsurf}$ is the image of $y$ under this equivalence, and $\postmove{x}$ is defined similarly.
The precise form \eqref{eq_reldel_unitality_explicit} of the equivalence shows that
$\tilde{U}(\postmove{x}) = \opcat{\one} \lexreldel U(x)$ and $\tilde{U}(\postmove{y}) = U(y) \lexreldel \one$,
and applying the complete forgetful functors, we find using \eqref{eq_induction_functors} that
\begin{equation*}
	U(\postmove{x}) = \bigoplus_\si{a} \opcat{\si{a}} \deligne \si{a}U(x) \qquad\text{and}\qquad U(\postmove{y}) = \bigoplus_\si{a} U(y)\si{a}^* \deligne \si{a}.
\end{equation*}

Finally, we need to specify a linear map $\movemap\colon \nodespace \to \postmove{\nodespace}$ between node spaces.
We pic a~vector~${f \otimes g \in \nodespace}$, where $f$ and $g$ are morphisms
$f\colon m \to U(x)$ and $ g \colon \opcat{n} \to U(y)$.
The value of $\movemap$ on~${f \otimes g}$ is $\movemap(f\otimes g) = \postmove{f} \otimes \postmove{g}$, which is a tensor product of morphisms $\postmove{f}\colon \opcat{\one} \deligne m \to \bigoplus_\si{a} \opcat{\si{a}} \deligne \si{a}U(x)$ and $\postmove{g} \colon \opcat{n} \deligne\one \to \bigoplus_\si{a} U(y)\si{a}^* \deligne \si{a}$ of $\postmove{\sigsurf}$.
Due to Schur's lemma, all simple components of $\postmove{f}$ and~$\postmove{g}$ except for $\tc{\postmove{f}}{\one}$ and $\tc{\postmove{g}}{\one}$ are zero.
Thus setting
$\tc{\postmove{f}}{\one} := \id_{\opcat{\one}} \deligne f$ and $\tc{\postmove{g}}{\one} := g \deligne \id_\one
$
defines $\postmove{f}$ and $\postmove{g}$, and thus the linear map $\movemap$.

The proof that the move $\moveDE$ is a move of invariance can be found in \cite[Section~4.3.3]{jf-thesis}.

With the list of elementary and composite moves complete, we are ready to formulate the next theorem, which states that the moves leave the evaluation invariant.
\begin{Theorem}\label{thm_moves}
	The moves $\moveOR$, $\moveC$, $\moveEF$, $\moveDV$, and $\moveL$ defined above are moves of invariance in the sense of Definition~{\rm\ref{def_invariance}} for the evaluation $\evaluate{-}$ of extruded graphs from Definition~{\rm\ref{def_eval}}.
\end{Theorem}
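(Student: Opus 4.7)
My overall plan is to verify the commutativity of \eqref{diag_movediag} move by move, reducing each case to an identity at the level of pre-block spaces. Since $\evaluate{\sigsurf_T}=\holsurj\circ\delta$ and $\Movemap=\holsurj\circ\Pmovemap\circ\iota$, and the composite $\iota\circ\holsurj$ is the holonomy idempotent, it suffices to establish $\Pmovemap\circ\delta=\postmove{\delta}\circ\psi$ at the pre-block level; post-composing with $\holsurj$ then yields the desired square. For each move I would unpack $\delta$, $\psi$ and $\Pmovemap$ using the shortform description of $\preblock$ from Remark~\ref{rem_preblock_shortform} and the explicit form of $\tau_m$ in \eqref{eq_eval_delta}, and compare the two compositions term-by-term in the direct-sum expansion over simple labels.

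For the elementary moves, $\moveOR$ reduces to a bookkeeping exercise: after identifying $\opcat{\opcat{\mcat}}\cong\mcat$ via the strict pivotal structures on $\acat$ and $\bcat$, the two instances of $\tau_m$ agree. For $\moveC$, the generalized composition $\mitosis{\mcat}$ was built so that contracting the intermediate $\mcat$-strand in $\delta(f)\otimes\delta(g)$ collapses, via \eqref{eq_parallel_res} and the dual-basis identity of Lemma~\ref{lem_dual_homs_and_duality}, to $\postmove{\delta}$ applied to the composed morphism. For $\moveEF$, the adjunction $U_\bcat\dashv\leftind$ from \eqref{eq_induction_adjunction} identifies the pre-block spaces before and after, and the factor $1/\catdim{\bcat}$ in $\psi$ is precisely the normalization discrepancy recorded in Remark~\ref{rem_discrepancies_in_dimensions} between the Deligne-product and relative-Deligne-product traces.

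For the composite moves, the invariance of $\moveDV$ is almost tautological: the map $\psi$ sends $1$ to $\soc(\id_m)$, and applying $\postmove{\delta}$ to this vector reproduces the effect of $\delta$ on the original region because $\tau_m$ is nothing but the component expansion of $\soc(\id_m)$. The loop move $\moveL$ I would obtain by factoring it through $\moveDV$ followed by a suitable $\moveC$: inserting a silent node on the loop breaks it into two defect segments meeting at an auxiliary node, and subsequent $\moveC$ contractions merge that auxiliary node back into the original. By Remark~\ref{rem_cmove_specializes_to_comp}, the resulting composition specializes to ordinary morphism composition followed by the retraction $r$ of Lemma~\ref{lemma_homspace_projection}, which is precisely the formula stipulated for $\psi$ in the $\moveL$ definition.

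The main obstacle, I expect, will be $\moveEF$: the pre- and post-move pre-block spaces are not literally equal but are related by a non-trivial adjunction isomorphism mixing hom-spaces of $\mcat$ and $\ncat$ with hom-spaces involving $\leftind(m\deligne n)=m\rexreldel n$. One must verify that, after applying $\delta$, the balancing of the silent object $\bigoplus_\si{b}\opcat{\si{b}}\deligne\si{b}$ in the $\bcat$-column of the pre-block side matches the balancing of $m\rexreldel n$ on the post-move side, with the $1/\catdim{\bcat}$ factor absorbing the categorical dimension that arises from the adjunction unit. This demands a careful unwinding of the holonomy diagram \eqref{eq_def_holbal} around the now-removed $\bcat$-region, which is the geometric heart of $\moveEF$.
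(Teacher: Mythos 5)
Your reduction of invariance to the single pre-block identity $\Pmovemap\circ\delta=\postmove{\delta}\circ\movemap$ is where the argument breaks. Unwinding Definition~\ref{def_invariance}, the left-hand path of \eqref{diag_movediag} is $\postmove{\holsurj}\circ\Pmovemap\circ\iota\circ\holsurj\circ\delta=\postmove{\holsurj}\circ\Pmovemap\circ\holidem\circ\delta$, while your identity only gives $\postmove{\holsurj}\circ\Pmovemap\circ\delta$ for the right-hand path; since $\delta$ does not land in the block space, the insertion of the holonomy idempotent $\holidem=\iota\circ\holsurj$ is not harmless, and "post-composing with $\holsurj$" does not close the square. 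One must additionally prove a compatibility of $\Pmovemap$ with the holonomy idempotents, $\Pmovemap\circ\holidem=\postmove{\holidem}\circ\Pmovemap$ (this is the paper's diagram \eqref{diag_moveproof_hol_condition}, justified abstractly by Lemmas~\ref{lem_morphs_of_retracts} and~\ref{lem_moveproof_hol_condition}). This second check is the bulk of the paper's work: for $\moveC$ it is the comparison of holonomy paths before and after contraction, which hinges on the balancing of $x\mitosis{\mcat}y$ coming from the module Eilenberg--Watts description; for $\moveEF$, where the $\bcat$-domain disappears, the pre-block spaces are not isomorphic at all, and the paper constructs an injection $c\colon\postmove{\preblock}\to\preblock$ satisfying $c\circ\Pmovemap=\holidem_\bcat$ together with the factorization $\holidem=\holidem_{\mathrm{Rest}}\circ\holidem_\bcat$. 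Your closing remark about "unwinding the holonomy" for $\moveEF$ gestures at this, but inside a framework in which that check is not even logically required, and it is omitted entirely for $\moveOR$, $\moveC$ and $\moveDV$.

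Two further points. For $\moveDV$ your "tautological" direct verification would still owe the same holonomy compatibility (the pre-block space genuinely changes: a new node space factor and a new sum over simples appear); the paper instead avoids this by a composite-move argument: it shows $\moveDV$ followed by $\moveC$ is the identity move and that $\Movemap_{\moveC}$ is injective, so invariance of $\moveDV$ follows from Lemma~\ref{lem_move_section_invariance}. For $\moveL$, the factorization "$\moveDV$ followed by suitable $\moveC$-contractions" cannot be right as stated: $\moveDV$ then $\moveC$ on one edge of the resulting bigon is exactly the identity move just mentioned, so it recreates the loop. The correct chain is $\moveDV$, then $\moveOR$ and $\moveEF$ to fuse the two parallel strands into a single defect line labeled in $\opcat{\mcat}\reldel\mcat$, and only then the one-sided $\moveC$; this is what produces the retraction $r$ of Lemma~\ref{lemma_homspace_projection} and the $\frac{1}{\catdim{\acat}}$ in the formula for $\movemap_{\moveL}$.
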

Before we get to the proof of Theorem~\ref{thm_moves}, which is the content of Appendix~\ref{app_moveproofs},
we will see that Theorem~\ref{thm_moves} implies the uniqueness result discussed in the next section.

\subsection{Uniqueness of the evaluation}\label{sec_uniqueness}
To state the uniqueness property of the evaluation procedure $\evaluate{-}$ defined in Definition~\ref{def_eval}, we introduce some terminology.
\begin{Definition}\label{def_abs_eval}
A \emph{pre-evaluation} $\alteval{-}$ is a choice of linear map $\alteval{\sigsurf}\colon N \to T$ for every extruded graph $\sigsurf$ on $\mathbb{S}^2$ with node space $N$ and block space $T$.	

Consider the set $E$ of triples
\begin{equation*}
		E := \class{(\sigsurf, f, \varphi)}{\,\begin{matrix}
				\text{$\sigsurf$ is an extruded graph on $\mathbb{S}^2$ with node space $\nodespace$} \\
				\text{and block space $\blockspace$, $f\in \nodespace$ and $\varphi \in \blockspace^*$.}
			\end{matrix}}.
\end{equation*}
A \emph{scalar pre-evaluation} is a function $s\colon E \to \field$, which is linear in the variables $f$ and $\varphi$.
\end{Definition}
\begin{Observation}\label{obs_eval_and_scalar}
	There is an obvious bijection between pre-evaluations and scalar pre-evaluations.
	For a fixed extruded graph $\sigsurf$, a scalar pre-evaluation is a linear map $s_\sigsurf \colon \nodespace \otimes \blockspace^* \to \field$.
	Likewise, a pre-evaluation is, for each extruded graph $\sigsurf$, a linear map $\alteval{\sigsurf} \colon \nodespace \to \blockspace$.
	Since $T$ is finite-dimensional, these spaces of linear maps coincide.
\end{Observation}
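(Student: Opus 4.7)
The plan is to reduce the statement to the standard hom–tensor adjunction in the finite-dimensional setting, applied pointwise in the extruded graph $\sigsurf$. For any fixed $\sigsurf$, let $\nodespace = \nodespace(\sigsurf)$ and $\blockspace = \blockspace(\sigsurf)$. Both spaces are finite-dimensional: $\nodespace$ is a tensor product of Hom-spaces in finite semisimple linear categories (see equation \eqref{eq_nodespace}), and $\blockspace$ is a subspace of the pre-block space $\preblock$, which by Remark~\ref{rem_preblock_shortform} is again a finite tensor product of Hom-spaces in such categories. Consequently, the canonical linear map
\begin{equation*}
	\Phi_\sigsurf \colon\ \homsetin{\vect}{\nodespace}{\blockspace} \longrightarrow \homsetin{\vect}{\nodespace \otimes \blockspace^*}{\field}, \qquad \alpha \mapsto \bigl( f \otimes \varphi \mapsto \varphi(\alpha(f)) \bigr),
\end{equation*}
is an isomorphism, because for finite-dimensional $\nodespace$ and $\blockspace$ both sides have dimension $\dim(\nodespace) \cdot \dim(\blockspace)$ and $\Phi_\sigsurf$ is visibly injective.

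Next I would assemble these pointwise isomorphisms into the claimed bijection. Given a pre-evaluation $\alteval{-}$, define a scalar pre-evaluation by $s(\sigsurf, f, \varphi) := \Phi_\sigsurf(\alteval{\sigsurf})(f \otimes \varphi) = \varphi(\alteval{\sigsurf}(f))$; bilinearity in $f$ and $\varphi$ is automatic from the linearity of $\alteval{\sigsurf}$ and of $\varphi$. Conversely, given a scalar pre-evaluation $s$, the bilinearity in the variables $f$ and $\varphi$ means that for each $\sigsurf$ the function $s(\sigsurf, -, -)$ corresponds to a unique element of $\homsetin{\vect}{\nodespace \otimes \blockspace^*}{\field}$; applying $\Phi_\sigsurf^{-1}$ yields a linear map $\alteval{\sigsurf} \colon \nodespace \to \blockspace$, and the family $(\alteval{\sigsurf})_\sigsurf$ is the desired pre-evaluation. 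The two constructions are mutually inverse by definition of $\Phi_\sigsurf$.

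There is no substantive obstacle here: the only ingredient beyond pure linear algebra is the finite-dimensionality of $\nodespace$ and $\blockspace$, which has already been established in Section~\ref{sec_fssrecap}. In particular, no compatibility between different choices of $\sigsurf$ needs to be checked, since both notions of (pre-)evaluation are defined extruded graph by extruded graph.
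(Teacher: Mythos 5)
Your proposal is correct and follows essentially the same route as the paper, which simply identifies $\homsetin{\vect}{\nodespace}{\blockspace}$ with $\homsetin{\vect}{\nodespace \otimes \blockspace^*}{\field}$ pointwise in $\sigsurf$ using the finite-dimensionality of $\blockspace$; you merely spell out the canonical isomorphism $\Phi_\sigsurf$ and the assembly over all extruded graphs in more detail.
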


Clearly, our evaluation from Definition~\ref{def_eval}, restricted to extruded graphs on the sphere, is a~pre-evaluation in this sense.
A pre-evaluation $\alteval{-}$ is said to be \emph{compatible} with a move $\mathbf{M}$, if the diagram
\begin{equation*}
	\begin{tikzcd}
		\nodespace \arrow[r, "\alteval{\sigsurf_{T}}"{above}] \arrow[d, "\movemap"{left}] & \blockspace \arrow[d, "\Movemap"] \\
		\postmove{\nodespace} \arrow[r, "\alteval{\postmove{\sigsurf_{T}}}"{below}] & \postmove{\blockspace}
	\end{tikzcd}
\end{equation*}
commutes in a generalization of \eqref{diag_movediag}.
We say that a scalar pre-evaluation $s$ is compatible with~$\mathbf{M}$ if $s(\sigsurf_T, f, \varphi) = s\bigl(\postmove{\sigsurf_T}, \movemap(f), \varphi \circ \Movemap^{-1}\bigr)$.
\begin{Theorem}\label{thm_uniqueness}
	The evaluation $\evaluate{-}$ defined in Definition~{\rm\ref{def_eval}} is the unique pre-evaluation such that lasso graphs are evaluated as in Proposition~{\rm\ref{move_SV}}, and which is compatible with the moves $\moveOR$, $\moveC$, and $\moveEF$.
	
	The scalar evaluation defined in \eqref{eq_total_eval} is the unique scalar pre-evaluation such that lasso graphs are evaluated as in Proposition~{\rm\ref{move_SV}}, and which is compatible with the moves $\moveOR$, $\moveC$, and~$\moveEF$.
\end{Theorem}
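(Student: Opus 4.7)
The overall approach is a reduction argument: exhibit an algorithm that turns any extruded graph on $\mathbb{S}^2$ into a lasso graph by a finite sequence of the three allowed moves, then propagate the hypothesized agreement on lasso graphs backwards along the reduction to recover agreement on the original surface. The engine of the propagation is the observation, noted in the move overview, that $\Movemap$ is an isomorphism for each of $\moveOR$, $\moveC$, $\moveEF$. Compatibility with such a move $\mathbf{M}$ between $\sigsurf$ and $\postmove{\sigsurf}$ therefore rearranges to
\begin{equation*}
\alteval{\sigsurf}(f) \;=\; \Movemap^{-1}\!\bigl(\alteval{\postmove{\sigsurf}}(\movemap(f))\bigr) \qquad \text{for every } f \in \nodespace,
\end{equation*}
and the same identity holds verbatim for $\evaluate{-}$ by Theorem~\ref{thm_moves}. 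So $\alteval{\sigsurf} = \evaluate{\sigsurf}$ as soon as $\alteval{\postmove{\sigsurf}} = \evaluate{\postmove{\sigsurf}}$. Iterating along a reduction chain $\sigsurf \leadsto \sigsurf_1 \leadsto \cdots \leadsto \sigsurf_L$ with $\sigsurf_L$ a lasso graph reduces the question to agreement on $\sigsurf_L$, which is the remaining hypothesis, namely Proposition~\ref{move_SV}.

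For the reduction itself, I would induct on the lexicographic pair $(V,E)$ of node count and defect line count. While $V \geq 2$, connectedness of $\mathbb{S}^2$ produces a defect line joining two distinct nodes; $\moveOR$ aligns its orientation if necessary, then $\moveC$ contracts it and strictly decreases $V$. Once $V = 1$, the extruded graph is a bouquet of loops based at the single vertex; one then locates two loops bounding a common bigon-shaped face and applies $\moveEF$ to merge them, strictly decreasing $E$. The algorithm terminates at $(V,E) = (1,1)$, a lasso graph. The scalar version of the theorem is then a direct corollary via Observation~\ref{obs_eval_and_scalar}, whose bijection between pre-evaluations and scalar pre-evaluations respects both the compatibility with moves and the prescription on lasso graphs.

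The step I expect to be the main obstacle is the single-vertex stage of the reduction: guaranteeing that, whenever $E \geq 2$ loops are based at one vertex on $\mathbb{S}^2$, a disc region admissible for $\moveEF$ can be found. Euler's formula yields $n+1$ disc-shaped faces for $n$ loops with face-length sum $2n$, which is suggestive of a bigon, but certain rotation systems at the vertex, e.g.\ the cyclic order $(a_1,a_2,b_1,b_2,c_1,c_2)$ for $n=3$, realize face sequences such as $(3,1,1,1)$ with no bigons whatsoever. Removing this gap will likely require either a slightly broader interpretation of $\moveEF$ applied across the interior of a longer face, or a more subtle ordering of $\moveC$ steps that never brings the surface into a bigon-free bouquet configuration, or an auxiliary lemma establishing that the ``stuck'' single-vertex configurations cannot arise as the reduction of any fine extruded graph through the moves used here.
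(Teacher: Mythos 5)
Your propagation mechanism (invertibility of $\Movemap$ for the three moves, plus agreement on lasso graphs via Proposition~\ref{move_SV}) is exactly the paper's, and your first reduction stage (contract non-loop edges with $\moveOR$/$\moveC$ until one node remains -- the paper phrases this as contracting a spanning tree, using that fineness forces the defect graph to be connected) also matches. The genuine gap is the one you flagged yourself: at the single-vertex stage your algorithm has no way to decrease the number of loops. The move $\moveEF$ as defined merges two defect lines running in parallel between \emph{two} node segments, so it does not directly apply to a pair of loops based at the same node, and even under a generous reading your own flower example $(a_1,a_2,b_1,b_2,c_1,c_2)$ shows that bigon faces need not exist; moreover loops cannot be avoided by cleverly ordering the $\moveC$-steps, since the original extruded graph may already contain loops. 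None of $\moveOR$, $\moveC$, $\moveEF$ alone removes a loop, so the reduction genuinely stalls at $(V,E)=(1,n)$ with $n\geq 2$.

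The missing idea is the loop move $\moveL$. After deleting the interior of the single node, the loops become pairwise disjoint chords in a closed disk, so there is always an \emph{innermost} loop whose two endpoints are adjacent on the node boundary (this replaces your search for a bigon, and it succeeds precisely in your ``stuck'' configurations, where every loop is innermost). Appendix~\ref{move_L_pf} realizes $\moveL$ as the composite $\moveDV$, then $\moveOR$, $\moveEF$, $\moveC$; and compatibility of an arbitrary pre-evaluation with $\moveDV$ is not an extra hypothesis, because $\moveDV$ followed by $\moveC$ is the identity move with $\movemap_{\moveC}\circ\movemap_{\moveDV}=\mathrm{id}$ and $\Movemap_{\moveDV}=\Movemap_{\moveC}^{-1}$, so it follows formally from compatibility with $\moveC$ and invertibility of $\Movemap_{\moveC}$. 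With $\moveL$ available, innermost loops are removed one at a time until a single loop -- a lasso graph -- remains, and your back-propagation argument then closes the proof exactly as in the paper; the scalar statement via Observation~\ref{obs_eval_and_scalar} is fine as you state it.
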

\begin{proof}
	We consider any pre-evaluation $\alteval{-}$ which evaluates lasso graphs as in Proposition~\ref{move_SV} and for which the moves $\moveOR$, $\moveC$, and $\moveEF$ are moves of invariance.
	We then show that $\alteval{\sigsurf} = \evaluate{\sigsurf}$ for any extruded graph $\sigsurf$ on $\mathbb{S}^2$.
	
	The network of $n$ defect lines and $k$ nodes in $\sigsurf$ form a connected graph with $n$ edges and $k$ vertices.
	Any connected graph has a spanning tree, that is, a sub-graph without cycles which contains all vertices of the original graph.
	We pick such a spanning tree, and in it an edge.
	Note that this edge is not a loop.
	We may thus apply the contraction move $\moveC$ to the corresponding defect line in $\sigsurf$, obtaining an extruded graph $\postmove{\sigsurf}$ with $(n-1)$ defect lines and $(k-1)$ nodes.
	Since~$\moveC$ is by assumption compatible with the pre-evaluation $\alteval{-}$, we find $\Movemap_\mathbf{C} \circ \alteval{\sigsurf} = \alteval{\postmove{\sigsurf}} \circ \movemap_\mathbf{C}$,
	where $\movemap_\mathbf{C}\colon \nodespace \to \postmove{\nodespace}$ and $\Movemap_\mathbf{C} \colon \blockspace \to \postmove{\blockspace}$ denote the linear maps defined in~$\moveC$.
	This step is repeated until we arrive at an extruded graph $\sigsurf_1$ with only a single node.
	Viewing all individual $\moveC$-moves as one composite move, it is clear that this move is compatible with $\alteval{-}$: $\Movemap_1 \circ \alteval{\sigsurf} = \alteval{\sigsurf_1} \circ \movemap_1$.
	As depicted in Figure~\ref{pic_bouquet_of_circles}, the defect lines of $\sigsurf_1$, all start and end at the unique node.
	The sphere $\sigsurf_1$ with the interior of the node removed is a closed disk with embedded defect lines that start and end on the boundary.
	Since the lines do not intersect, there is at least one defect line whose start and end points are neighbors on the boundary of the disk.
	To a defect line with this property, the loop move $\moveL$ can be applied.
	As we will see below (see Appendix~\ref{move_L_pf}), the~$\moveL$-move can be realized as a composition of $\moveC$, $\moveOR$, and $\moveEF$-moves.
	Thus, the evaluation~$\alteval{-}$ is compatible with $\moveL$.
	This step of applying the move $\moveL$ is repeated until only one defect line is left.

\begin{figure}[ht]\centering
\includegraphics{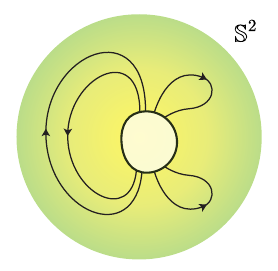}
\caption{}\label{pic_bouquet_of_circles}
\end{figure}
	
	The resulting extruded graph is a lasso graph $Q$.
	Having constructed a move between $\sigsurf$ and $Q$ compatible with $\alteval{-}$, we denote the associated linear maps between node spaces and block spaces by $\movemap\colon \nodespace \to \nodespace_Q$ and $\Movemap \colon \blockspace \to \blockspace_Q$.
	By assumption, Proposition~\ref{move_SV} holds for the pre-evaluation $\evaluate{-}$.
	As a consequence, $\evaluate{-}$ and $\alteval{-}$ are equal on lasso graphs such as $Q$, and the following diagram commutes
	\begin{equation*}
		\begin{tikzcd}
			\nodespace \arrow[d, "\alteval{\sigsurf}"{left}] \arrow[r, "\movemap"] & \nodespace_Q \arrow[d, "\alteval{Q}"{left}, "\evaluate{Q}"{right}] & \nodespace \arrow[d, "\evaluate{\sigsurf}"] \arrow[l, "\movemap"{above}] \\
			\blockspace \arrow[r, "\Movemap"{below}] & \blockspace_Q & \blockspace \arrow[l, "\Movemap"].
		\end{tikzcd}
	\end{equation*}
	Since $\Movemap$ is an isomorphism, we can deduce that $\alteval{\sigsurf} = \evaluate{\sigsurf}$.
	
	The statement for the scalar evaluation follows immediately from the first part of the theorem and Observation~\ref{obs_eval_and_scalar}.
\end{proof}

\appendix

\section{Details on moves}\label{app_moveproofs}

\subsection{Proof strategy for elementary moves} \label{sec_moveproof}
The discussion of the elementary moves in the subsequent sections follows a general pattern, which we outline here.

Recall that any move $\mathbf{M}$ between extruded graphs $\sigsurf$ and $\postmove{\sigsurf}$ involves linear maps
\begin{alignat*}{4}
	&\movemap\colon\ &&\nodespace \to \postmove{\nodespace}\qquad &&\text{between node spaces,}&\\
	&\Pmovemap \colon\ &&\preblock \to \postmove{\preblock} \qquad &&\text{between pre-block spaces, and}&\\
	&\Movemap \colon\ &&\blockspace \to \postmove{\blockspace} \qquad &&\text{between block spaces.}&
\end{alignat*}
In order to show that an elementary move is an invariance, we consider the diagram \eqref{diag_invariance_scheme} below, which is a subdivision of \eqref{diag_movediag}
\begin{equation}\label{diag_invariance_scheme}
	\begin{tikzcd}[column sep = huge]
		\nodespace \arrow[r,"{\delta}"] \arrow[d, "\movemap"] & \preblock \arrow[r, "\holsurj"] \arrow[d, "\Pmovemap"] & \blockspace \arrow[d, "\Movemap"] \\
		\postmove{\nodespace} \arrow[r,"{\postmove{\delta}}"]& \postmove{\preblock} \arrow[r, "\postmove{\holsurj}"] & \postmove{\blockspace}.
	\end{tikzcd}
\end{equation}
We will deduce the commutativity of the outer paths in \eqref{diag_invariance_scheme} from the commutativity of the two cells.
Commutativity of the outer paths then implies by Definition~\ref{def_invariance} that we have a move of invariance at hand.

For the right-hand square in \eqref{diag_invariance_scheme},
this is facilitated by the fact that $\blockspace$ is a retract of $\preblock$.
Consider the following general observation.
\begin{Lemma}\label{lem_morphs_of_retracts}
	Let $\xcat$ be a category, and let
	\begin{equation*}
		\begin{tikzcd}
			x \arrow[r,hook,"\iota"{below}] & y \arrow[l, two heads, bend right, "\pi"{above}]
		\end{tikzcd}
		\qquad\text{and}\qquad
		\begin{tikzcd}
			x' \arrow[r,hook,"\iota'"{below}] & y' \arrow[l, two heads, bend right, "\pi'"{above}]
		\end{tikzcd}
	\end{equation*}
	be retracts in $\xcat$.
	Denote the corresponding idempotents by $h = \iota \circ \pi$ and $h' = \iota' \circ \pi'$.
	A~morphism~${\varphi \colon y \to y'}$ which satisfies
	\begin{equation}\label{diag_map_of_retracts_cond}
		\begin{tikzcd}
			y \arrow[r, "h"] \arrow[d, "\varphi"] & y \arrow[d, "\varphi"] \\
			y' \arrow[r, "h'"] & y'
		\end{tikzcd}
	\end{equation}
	is a morphism of retracts, meaning that
	\begin{equation}\label{diag_map_of_retracts}
		\begin{tikzcd}
			x \arrow[r, "\iota"] \arrow[d, "\psi"] & y \arrow[d, "\varphi"] \\
			x' \arrow[r, "\iota'"] & y'
		\end{tikzcd}
		\qquad\text{and}\qquad
		\begin{tikzcd}
			y \arrow[r, "\pi"] \arrow[d, "\varphi"] & x \arrow[d, "\psi"] \\
			y' \arrow[r, "\pi'"] & x'
		\end{tikzcd}
	\end{equation}
	commute for $\psi = \pi' \circ \varphi \circ \iota$.
	Conversely, any pair of morphisms $(\varphi,\psi)$ which satisfies \eqref{diag_map_of_retracts} also satisfies \eqref{diag_map_of_retracts_cond}, and $\psi = \pi' \circ \varphi \circ \iota$ holds.
\end{Lemma}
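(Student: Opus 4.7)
The proof is pure diagram chasing, built entirely on the two splitting identities $\pi \circ \iota = \id_x$ and $\pi' \circ \iota' = \id_{x'}$, together with the factorizations $h = \iota \circ \pi$ and $h' = \iota' \circ \pi'$. There is no real obstacle; I will set up the forward direction and the converse symmetrically, each as a short calculation.

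For the forward direction, suppose $\varphi \circ h = h' \circ \varphi$ and set $\psi := \pi' \circ \varphi \circ \iota$. To verify the left square in \eqref{diag_map_of_retracts}, I would compute
\[
\iota' \circ \psi \;=\; \iota' \circ \pi' \circ \varphi \circ \iota \;=\; h' \circ \varphi \circ \iota \;=\; \varphi \circ h \circ \iota \;=\; \varphi \circ \iota \circ \pi \circ \iota \;=\; \varphi \circ \iota,
\]
using the hypothesis in the third step and $\pi \circ \iota = \id_x$ in the last. The right square is the mirror calculation: $\psi \circ \pi = \pi' \circ \varphi \circ \iota \circ \pi = \pi' \circ \varphi \circ h = \pi' \circ h' \circ \varphi = \pi' \circ \iota' \circ \pi' \circ \varphi = \pi' \circ \varphi$.

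For the converse, suppose the pair $(\varphi,\psi)$ makes both squares in \eqref{diag_map_of_retracts} commute. Applying $\pi'$ on the left of the identity $\iota' \circ \psi = \varphi \circ \iota$ and using $\pi' \circ \iota' = \id_{x'}$ immediately gives $\psi = \pi' \circ \varphi \circ \iota$, so $\psi$ is forced to be the morphism from the forward direction. Then the compatibility with the idempotents follows by chaining the two squares:
\[
\varphi \circ h \;=\; \varphi \circ \iota \circ \pi \;=\; \iota' \circ \psi \circ \pi \;=\; \iota' \circ \pi' \circ \varphi \;=\; h' \circ \varphi,
\]
which is precisely \eqref{diag_map_of_retracts_cond}. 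The main ``step'' is really only recognizing that each of the four implications (forward left square, forward right square, converse uniqueness of $\psi$, converse compatibility) is a two- or three-line chain of substitutions, so the proof can be presented as a single compact block of calculations rather than a long argument.
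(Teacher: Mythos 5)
Your proof is correct: both directions are exactly the routine diagram chases using $\pi\circ\iota=\id_x$, $\pi'\circ\iota'=\id_{x'}$ and the factorizations $h=\iota\circ\pi$, $h'=\iota'\circ\pi'$, and each computation checks out. The paper in fact states Lemma~\ref{lem_morphs_of_retracts} without proof, treating it as routine, and your argument is precisely the intended one, so there is nothing to add.
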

Applied to our situation, Lemma~\ref{lem_morphs_of_retracts} becomes the following.
\begin{Lemma}\label{lem_moveproof_hol_condition}
	The commutativity of the diagram
	\begin{equation}\label{diag_moveproof_hol_condition}
		\begin{tikzcd}
			\preblock \arrow[r, "\holidem"] \arrow[d, "\Pmovemap"{left}] & \preblock \arrow[d, "\Pmovemap"]\\
			\postmove{\preblock} \arrow[r, "\postmove{\holidem}"] & \postmove{\preblock}
		\end{tikzcd},
	\end{equation}
	involving the holonomy \emph{idempotents} $\holidem$, $\postmove{\holidem}$ instead of the projectors $\holsurj$, $\postmove{\holsurj}$ implies the commutativity of the right square in \eqref{diag_invariance_scheme}, and thus that the elementary move $\mathbf{M}$ is an invariance.
\end{Lemma}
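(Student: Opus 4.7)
The plan is to recognise this lemma as an immediate specialisation of Lemma~\ref{lem_morphs_of_retracts}, applied to the retract $\iota\colon \blockspace \hookrightarrow \preblock$, $\holsurj\colon \preblock \twoheadrightarrow \blockspace$ (and its primed counterpart for $\postmove{\sigsurf}$), whose associated idempotent is, by construction \eqref{eq_def_holproj}, exactly the holonomy idempotent $\holidem = \iota \circ \holsurj$ from Definition~\ref{def_holidem}.

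First I would set up the dictionary. In the notation of Lemma~\ref{lem_morphs_of_retracts}, take $x = \blockspace$, $y = \preblock$ with $\iota$ and $\holsurj$ as the retract data, and similarly $x' = \postmove{\blockspace}$, $y' = \postmove{\preblock}$, $\iota'$, $\postmove{\holsurj}$. Set $\varphi := \Pmovemap$ and $\psi := \Movemap$. The defining relation \eqref{eq_Movemap}, namely $\Movemap = \postmove{\holsurj} \circ \Pmovemap \circ \iota$, matches the requirement $\psi = \pi' \circ \varphi \circ \iota$ of Lemma~\ref{lem_morphs_of_retracts} on the nose. Moreover, the hypothesis \eqref{diag_moveproof_hol_condition} of the present lemma, which states $\Pmovemap \circ \holidem = \postmove{\holidem} \circ \Pmovemap$, is verbatim condition \eqref{diag_map_of_retracts_cond}.

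Invoking Lemma~\ref{lem_morphs_of_retracts} then yields commutativity of both squares in \eqref{diag_map_of_retracts}. The right-hand square reads $\postmove{\holsurj} \circ \Pmovemap = \Movemap \circ \holsurj$, which is precisely the right cell of \eqref{diag_invariance_scheme}. To conclude that $\mathbf{M}$ is an invariance in the sense of Definition~\ref{def_invariance}, I would paste this with the commutativity of the left cell of \eqref{diag_invariance_scheme}, i.e.\ $\Pmovemap \circ \delta = \postmove{\delta} \circ \movemap$. This latter identity is built into the definition \eqref{eq_Pmovemap} of $\Pmovemap$ as the direct sum $\bigoplus_\si{m} \postmove{\delta}_\si{m} \circ \movemap_\si{m}$ over simple state-sum labelings: restricted to the summand corresponding to the labeling fixed in $\sigsurf$, the identity is a tautology.

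There is no substantial obstacle; the content of the lemma is purely formal. Its value is organisational: it reduces the verification that a given move is an invariance to checking the single concrete identity $\Pmovemap \circ \holidem = \postmove{\holidem} \circ \Pmovemap$, allowing one to work with the explicit formula \eqref{eq_idemp} for $\holidem$ rather than with its a priori opaque corestriction $\holsurj$ from \eqref{eq_def_holproj}.
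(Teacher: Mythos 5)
The first half of your argument is exactly the paper's proof: Lemma~\ref{lem_morphs_of_retracts} applied with $x=\blockspace$, $y=\preblock$, the retract data $(\iota,\holsurj)$ and its primed counterpart, $\varphi=\Pmovemap$, and $\psi=\Movemap$, where \eqref{eq_Movemap} gives $\Movemap=\postmove{\holsurj}\circ\Pmovemap\circ\iota$; this yields the right cell of \eqref{diag_invariance_scheme} from \eqref{diag_moveproof_hol_condition}, and the paper says no more than that (the final clause of the lemma is to be read together with the separately verified left cell, as the sentence following the lemma makes explicit).

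The genuine gap is your claim that the left cell $\Pmovemap\circ\delta=\postmove{\delta}\circ\movemap$ is ``built into'' the definition \eqref{eq_Pmovemap} and is a tautology on the relevant summand. It is not. The defect-line labels of $\sigsurf$ are arbitrary objects, so $\nodespace$ is \emph{not} a summand of $\preblock=\bigoplus_{\si{m}}\nodespace_{\si{m}}$, and $\delta$ from \eqref{eq_eval_delta} is not a summand inclusion: it sends $f$ to the family of its components against resolutions into simple labels, weighted by dimension factors. The left cell therefore asserts a compatibility of the move map $\movemap$ with these resolutions (``resolve, then apply $\movemap_{\si{m}}$ and $\postmove{\delta}_{\si{m}}$'' versus ``apply $\movemap$, then resolve''), which is precisely the nontrivial half of the verification for each elementary move: it is trivial only for $\moveOR$ (where $\movemap$ and $\Pmovemap$ are identities, Appendix~\ref{move_OR_pf}), and for $\moveC$ and $\moveEF$ it requires the explicit computations in Appendices~\ref{move_C_pf} and~\ref{move_EF_pf}, using \eqref{eq_res_of_id}, \eqref{eq_parallel_res} and Schur's lemma. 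So your reduction to the single identity $\Pmovemap\circ\holidem=\postmove{\holidem}\circ\Pmovemap$ overstates what the lemma delivers; the correct conclusion is that invariance of an elementary move requires checking both \eqref{diag_moveproof_hol_condition} and the left cell of \eqref{diag_invariance_scheme}.
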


For every elementary move, we thus have to check two things: the commutativity of the left square in the diagram \eqref{diag_invariance_scheme}, and the commutativity of the square \eqref{diag_moveproof_hol_condition}.
This is done in the following sections.

\subsection{Elementary moves}
This is the first of two sections that comprise the proof of Theorem~\ref{thm_moves}.
For some moves, we have to prove the invariance property by directly using Definition~\ref{def_eval}, the definition of the evaluation procedure.
These moves are called elementary.
Other moves can be obtained as compositions of elementary moves, and we will treat them in Appendix~\ref{sec_compmoves}.

\subsubsection[OR -- orientation reversal]{${\moveOR}$ -- orientation reversal} \label{move_OR_pf}
In the introductory discussion in Section~\ref{sec_moves}, we have already seen that the node spaces before and after the move $\moveOR$ can be identified, and we can indeed view them as equal without loss of generality, by assuming that the acting categories $\acat$ and $\bcat$ are strictly pivotal.
As a consequence of the equality of node spaces, the pre-block spaces are also equal.
This makes the commutativity of the left square in \eqref{diag_invariance_scheme} particularly easy to prove, because the linear maps $\movemap$ and $\Pmovemap$ are identities in this case.

Explicitly, using \eqref{eq_preblock_shortform}, the pre-block space is of the form
\begin{equation*}
	\preblock = \homsetin{\mcat}{\sweed{y}{\opcat{\mcat}}}{\sweed{x}{\mcat}} \otimes \cdots = \homsetin{\opcat{\mcat}}{\sweed{x}{\opcat{\opcat{\mcat}}}}{\sweed{y}{\opcat{\mcat}}} \otimes \cdots = \postmove{\preblock}.
\end{equation*}

We must now show that the diagram \eqref{diag_moveproof_hol_condition} commutes.
Unraveling the definition of the two holonomy idempotents $\holidem$ for $\sigsurf$ and $\holidem'$ for $\sigsurf'$ reveals that $h = h'$.
This is because in the holonomy balancing \eqref{eq_def_holbal}, say for the domain $\acat$, the balancing
\begin{equation*}
	\homsetin{\mcat}{a^*\sweed{y}{\opcat{\mcat}}}{\sweed{x}{\mcat}} \otimes \cdots \cong \homsetin{\mcat}{\sweed{y}{\opcat{\mcat}}}{a\sweed{x}{\mcat}} \otimes \cdots
\end{equation*}
appears as a composition factor.
Thus balancing is the same for both $\preblock$ and $\postmove{\preblock}$.
A similar statement hold for the domain $B$.

As a consequence, the linear map $\Movemap$ between block spaces is an isomorphism.

\subsubsection[C -- contraction]{${\moveC}$ -- contraction} \label{move_C_pf}
The second move for which we show the invariance property is the contraction move, which is depicted in Figure~\ref{pic_moveC}.
Following the general proof strategy outlined in Appendix~\ref{sec_moveproof}, we now need to check that the left cell of the diagram \eqref{diag_invariance_scheme} commutes.
To this end, consider an extruded graph $\sigsurf_T$ which contains the disk $\sigsurf$ from Figure~\ref{pic_moveC}.
The node space $\nodespace$ of~$\sigsurf_T$ is given by $\nodespace = \homset{k\deligne\opcat{m}}{U(x)} \otimes \bigl\langle m\deligne\opcat{l},U(y)\bigr\rangle\otimes\nodespace_\rest$, where $\nodespace_\rest$ is a rest term that takes into account the nodes of $\sigsurf_T$ that lie outside of $\sigsurf$.
The commutativity of \eqref{diag_invariance_scheme} amounts to the equality of the linear maps $\postmove{\delta} \circ \movemap$ and $\Pmovemap \circ \delta$, which we do by evaluating them on a~vector~${f = \sweed{f}{\kcat} \otimes \sweed{f}{\opcat{\mcat}} \otimes \sweed{f}{\mcat} \otimes \sweed{f}{\opcat{\lcat}} \otimes \sweed{f}{\rest} \in \nodespace}$.
We then calculate
\begin{align*}
	\begin{aligned}
		\postmove{\delta} \circ \movemap (f) &= \postmove{\delta} ( (\sweed{f}{\mcat} \circ \sweed{f}{\opcat{\mcat}}) \otimes \sweed{f}{\kcat}\otimes \sweed{f}{\opcat{\lcat}} \otimes \sweed{f}{\rest} ) \\
		&= (\sweed{f}{\mcat} \circ \sweed{f}{\opcat{\mcat}}) \otimes \bigoplus_{\si{k},\si{l},\dots} \pdim{\si{k}}\pdim{\si{l}}\pdim{\cdots} ( (\sweed{f}{\kcat}\otimes \sweed{f}{\opcat{\lcat}} \otimes \sweed{f}{\rest}) \circ (\basisel{\si{k}}{k} \otimes \basisel{\opcat{\si{l}}}{\opcat{l}} \otimes \cdots) ),
	\end{aligned}
\end{align*}
where the first equality uses the definition of $\movemap$ from \eqref{eq_gencomp_def} and the second equality is the definition~\eqref{eq_eval_delta} of $\postmove{\delta}$.
On the other hand, we have
\begin{align*}
		\Pmovemap \circ \delta (f)\eqwithref{eq_eval_delta}{}& \bigoplus_{\si{k}, \si{m}, \si{l},\dots} \pdim{\si{k}}\pdim{\si{m}}\pdim{\si{l}}\pdim{\cdots} \Pmovemap ( f \circ (\basisel{\si{k}}{k} \otimes \basisel{\opcat{\si{m}}}{\opcat{m}} \otimes \basisel{\si{m}}{m} \otimes \basisel{\opcat{\si{l}}}{\opcat{l}} \otimes \cdots) )\\
={}& \bigoplus_{\si{k}, \si{m}, \si{l},\dots} \pdim{\si{k}}\pdim{\si{m}}\pdim{\si{l}}\pdim{\cdots}\Pmovemap ( ((\sweed{f}{\kcat} \circ \basisel{\si{k}}{k}) \otimes
		(\sweed{f}{\opcat{\mcat}} \circ \basisel{\opcat{\si{m}}}{\opcat{m}}) \otimes
		(\sweed{f}{\mcat} \circ \basisel{\si{m}}{m})\\
& \otimes
		(\sweed{f}{\opcat{\lcat}} \circ \basisel{\opcat{\si{l}}}{\opcat{l}}) \otimes
		(\sweed{f}{\rest} \circ \cdots) ) )\\
\eqwithref{eq_Pmovemap}{}& \bigoplus_{\si{k},\si{m}, \si{l},\dots} \pdim{\si{k}}\pdim{\si{m}}\pdim{\si{l}}\pdim{\cdots} \postmove{\delta}_{\si{k},\si{l},\dots} \circ \movemap_{\si{k},\si{l},\dots} ( ((\sweed{f}{\kcat} \circ \basisel{\si{k}}{k}) \otimes
		(\sweed{f}{\opcat{\mcat}} \circ \basisel{\opcat{\si{m}}}{\opcat{m}})\\
& \otimes
		(\sweed{f}{\mcat} \circ \basisel{\si{m}}{m}) \otimes
		(\sweed{f}{\opcat{\lcat}} \circ \basisel{\opcat{\si{l}}}{\opcat{l}}) \otimes
		(\sweed{f}{\rest} \circ \cdots) ) )\\
\eqwithref{eq_gencomp_def}{}& \bigoplus_{\si{k},\si{m}, \si{l},\dots} \pdim{\si{k}}\pdim{\si{m}}\pdim{\si{l}}\pdim{\cdots}
 \postmove{\delta}_{\si{k},\si{l},\dots} ((\sweed{f}{\mcat} \circ \basisel{\si{m}}{m} \circ\basisel{\opcat{\si{m}}}{\opcat{m}} \circ \sweed{f}{\opcat{\mcat}})\\
 & \otimes
		 ((\sweed{f}{\kcat} \circ \basisel{\si{k}}{k}) \otimes
		(\sweed{f}{\opcat{\lcat}} \circ \basisel{\opcat{\si{l}}}{\opcat{l}}) \otimes
		(\sweed{f}{\rest} \circ \cdots) ) )\\
\eqwithref{eq_res_of_id}{}&\bigoplus_{\si{k}, \si{l},\dots}\pdim{\si{k}}\pdim{\si{l}}\pdim{\cdots}
 \postmove{\delta}_{\si{k},\si{l},\dots} ((\sweed{f}{\mcat} \circ \sweed{f}{\opcat{\mcat}}) \otimes
		 ((\sweed{f}{\kcat} \circ \basisel{\si{k}}{k})\\
& \otimes
		(\sweed{f}{\opcat{\lcat}} \circ \basisel{\opcat{\si{l}}}{\opcat{l}}) \otimes
		(\sweed{f}{\rest} \circ \cdots) ) )\\
\eqwithref{eq_eval_delta}{}&(\sweed{f}{\mcat} \circ \sweed{f}{\opcat{\mcat}}) \otimes \bigoplus_{\si{k'}, \si{l'},\si{k}, \si{l},\dots}\pdim{\si{k}}\pdim{\si{l}}\pdim{\si{k'}}\pdim{\si{l'}}\pdim{\cdots}
		 ((\sweed{f}{\kcat} \circ \basisel{\si{k}}{k}) \otimes
		(\sweed{f}{\opcat{\lcat}} \circ \basisel{\opcat{\si{l}}}{\opcat{l}}) \\
&\otimes
		(\sweed{f}{\rest} \circ \cdots) ) \circ (\basisel{\si{k'}}{\si{k}} \otimes \basisel{\opcat{\si{l'}}}{\opcat{\si{l}}} \otimes \cdots) \\
={}&(\sweed{f}{\mcat} \circ \sweed{f}{\opcat{\mcat}}) \otimes \bigoplus_{\si{k}, \si{l},\dots}\pdim{\si{k}}\pdim{\si{l}}\pdim{\cdots}
		 ((\sweed{f}{\kcat} \circ \basisel{\si{k}}{k}) \otimes
		(\sweed{f}{\opcat{\lcat}} \circ \basisel{\opcat{\si{l}}}{\opcat{l}}) \otimes
		(\sweed{f}{\rest} \circ \cdots) ).
\end{align*}
In the last equality, we used Schur's lemma.

\begin{figure}[ht]\centering
\includegraphics{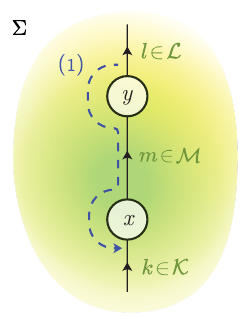} \ \raisebox{25mm}{$\xrightarrow{\moveC}$} \raisebox{1mm}{\includegraphics{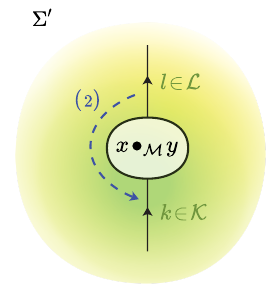}}
\caption{}\label{pic_moveC_pf}
\end{figure}

In order to check the commutativity of the square \eqref{diag_moveproof_hol_condition}, we need to prove that the two composition factors of the holonomy operations $\holidem$ and $\postmove{\holidem}$ corresponding to the paths from $k$ to $l$ agree.
These paths are labeled as (1) and (2) in Figure~\ref{pic_moveC_pf}.
We perform the check only for the holonomy with respect to the domain $\acat$; the argument for the $\bcat$-holonomy is similar.
Specifically, the following diagram \eqref{diag_moveC_pf_diag} needs to commute, in which the left vertical column composes to the path~(1) in Figure~\ref{pic_moveC_pf}, and the right vertical column composes to the path~(2).
For convenience, only the relevant parts of the pre-block spaces are written out (the sums over the variables~$\si{l}$ and~$\si{k}$ are omitted in \eqref{diag_moveC_pf_diag}):
\begin{equation}
	\begin{tikzcd}[row sep=small]
		\bigoplus_\si{n} \homset{\si{n} \deligne \opcat{\si{l}}a}{U(y)} \otimes \homset{\si{k} \deligne\opcat{\si{n}}}{U(x)} \arrow[d] \arrow[r, "\Pmovemap"] & \homset{\si{k} \deligne \opcat{\si{l}}a}{U(x \mitosis{\mcat} y)} \arrow[d] \\
		\bigoplus_\si{n} \homset{\si{n} \deligne \opcat{\si{l}}}{U(y)a^*} \otimes \homset{\si{k} \deligne\opcat{\si{n}}}{U(x)} \arrow[d] \arrow[r] & \homset{\si{k} \deligne \opcat{\si{l}}}{U(x \mitosis{\mcat} y)a^*} \arrow[ddddd] \\
		\bigoplus_\si{n} \homset{\si{n} \deligne \opcat{\si{l}}}{a^*U(y)} \otimes \homset{\si{k} \deligne\opcat{\si{n}}}{U(x)} \arrow[d] & \\
		\bigoplus_\si{n} \homset{a\si{n} \deligne \opcat{\si{l}}}{U(y)} \otimes \homset{\si{k} \deligne\opcat{\si{n}}}{U(x)}\arrow[d] & \\
		\bigoplus_\si{n} \homset{\si{n} \deligne \opcat{\si{l}}}{U(y)} \otimes \homset{\si{k} \deligne\opcat{\si{n}}a}{U(x)} \arrow[d] & \\
		\bigoplus_\si{n} \homset{\si{n} \deligne \opcat{\si{l}}}{U(y)} \otimes \homset{\si{k} \deligne\opcat{\si{n}}}{U(x)a^*} \arrow[d] & \\
		\bigoplus_\si{n} \homset{\si{n} \deligne \opcat{\si{l}}}{U(y)} \otimes \homset{\si{k} \deligne\opcat{\si{n}}}{a^*U(x)} \arrow[d] \arrow[r] & \homset{\si{k} \deligne \opcat{\si{l}}}{a^*U(x \mitosis{\mcat} y)} \arrow[d] \\
		\bigoplus_\si{n} \homset{\si{n} \deligne \opcat{\si{l}}}{U(y)} \otimes \homset{a\si{k} \deligne\opcat{\si{n}}}{U(x)} \arrow[r] & \homset{a\si{k} \deligne \opcat{\si{l}}}{U(x \mitosis{\mcat} y)}\label{diag_moveC_pf_diag}
	\end{tikzcd}
\end{equation}
In the above diagram, the commutativity of the top and bottom squares is trivial.
The large cell commutes by definition of the balancing on $x \mitosis{\mcat} y$, as can be seen by unraveling the definition of the module Eilenberg--Watts functors.

The proof for the one-sided case is completely analogous; however here, only a single domain is adjacent to the $(m\in \mcat)$-labeled defect line, and hence the following paths need to be compared for the holonomy operation, see Figure~\ref{pic_moveC_oneside_pf}.
This completes the proof that the move $\moveC$ is an invariance.	

\begin{figure}[ht]\centering
\includegraphics{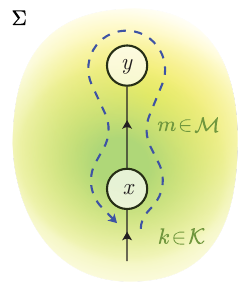} \ \raisebox{25mm}{$\xrightarrow{\moveC}$} \raisebox{5mm}{\includegraphics{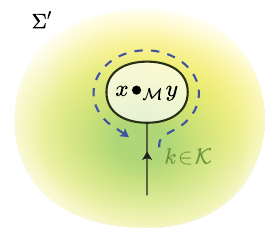}}
\caption{}\label{pic_moveC_oneside_pf}
\end{figure}

The linear map $\Movemap$ between block spaces for the $\moveC$-move also appears in \cite[Lemma~4.39]{fss-statesum}, where it is proved that $\Movemap$ is an isomorphism.

\subsubsection[EF -- edge fusion]{${\moveEF}$ -- edge fusion} \label{move_EF_pf}
As described in Section~\ref{sec_moves_overview}, the edge fusion move allows use to merge parallel defect lines which share start and end nodes.
We proceed to show that $\moveEF$ is a move of invariance, following the proof strategy outlined in Appendix~\ref{sec_moveproof}.
The first step is to show that the left square in the diagram \eqref{diag_invariance_scheme} commutes, i.e., that
\begin{equation}\label{eq_ef_lsq}
	\postmove{\delta} \circ \movemap = \Pmovemap\circ \delta.
\end{equation}
To improve legibility, we perform the calculations using the diagrammatic calculus outlined in Section~\ref{sec_cats}.
The linear map $\movemap \colon \nodespace \to \postmove{\nodespace}$ defined in \eqref{eq_moveEF_movemap_def} is composed of the adjunction isomorphisms~\eqref{eq_induction_adjunction_l_iso} and \eqref{eq_induction_adjunction_r_iso}.
Disregarding tensor factors of the node spaces that are unaffected by the move, $\movemap$ takes for our given defect line labels $m$ and $n$ the form
\begin{equation*}
	\movemap = \bigoplus_{\si{a} \in \bcat} \pica{1}{pic_ef_moma_1} \otimes \bigoplus_{\si{b}\in \bcat} \pdim{\si{b}} \pica{1}{pic_ef_moma_2}.
\end{equation*}
Implementing the definition \eqref{eq_eval_delta} of $\delta$, we obtain
\begin{equation}\label{eq_EF_lsq_1}
	\postmove{\delta} \circ \movemap = \bigoplus_{\si{k} \in \mcat \reldel \ncat} \pdim{\si{k}} \pdim{\si{b}} \sum_{\si{a}\si{b}} \pica{1}{pic_ef_lsq_1} \otimes \pica{1}{pic_ef_lsq_2},
\end{equation}
where we stress that the $\star$-notation here denotes a sum over dual bases of Hom-spaces in the \emph{relative} Deligne product, which is why the coupons are drawn in red.
Turning to the other side of equation \eqref{eq_ef_lsq}, we read off the expression for $\Pmovemap$ from \eqref{eq_Pmovemap}
\begin{equation}\label{eq_EF_Pmovemap}
	\Pmovemap = \bigoplus_{\si{k}} \bigoplus_{\substack{\si{m}\in \mcat \\ \si{n} \in \ncat}} \pdim{\si{k}}\sum_{\si{a}\si{b}} \pdim{\si{b}} \pica{1}{pic_ef_pmoma_1} \otimes \pica{1}{pic_ef_pmoma_2},
\end{equation}
and calculate
\begin{align*}
		\Pmovemap \circ \delta &\eqwithref{eq_eval_delta} \bigoplus_{\si{k}\si{m}\si{n}} \pdim{\si{k}} \pdim{\si{m}}\pdim{\si{n}}\sum_{\si{a}\si{b}} \pdim{\si{b}} \pica{1}{pic_ef_lsq_3} \otimes \pica{1}{pic_ef_lsq_4}\\
		&\eqwithref{eq_def_indadjr} \bigoplus_{\si{k}\si{m}\si{n}} \pdim{\si{k}} \pdim{\si{m}}\pdim{\si{n}}\sum_{\si{a}\si{b}} \pdim{\si{b}} \pica{1}{pic_ef_lsq_5} \otimes \pica{1}{pic_ef_lsq_6}\\
		&\eqwithref{eq_parallel_res} \bigoplus_{\si{k}} \pdim{\si{k}} \sum_{\si{a}\si{b}} \pdim{\si{b}} \pica{1}{pic_ef_lsq_7} \otimes \pica{1}{pic_ef_lsq_8}.
\end{align*}
Comparison with \eqref{eq_EF_lsq_1} shows that $\postmove{\delta} \circ \movemap = \Pmovemap\circ \delta$, and hence the left square in \eqref{diag_invariance_scheme} commutes.

We are left to show that the square \eqref{diag_moveproof_hol_condition} (which is the outer square in \eqref{diag_emove_proof} below) commutes for the move $\moveEF$.
To this end, we note that the holonomy idempotent on $\sigsurf$ factorizes into commuting idempotents $\holidem = \holidem_\mathrm{Rest} \circ\holidem_\bcat$, where $\holidem_\bcat$ is the holonomy idempotent with respect to the $\bcat$-labeled domain in $\sigsurf$.
Thus, $\holidem_\bcat \circ \holidem = \holidem = \holidem_\mathrm{Rest} \circ \holidem_\bcat$.
The square \eqref{diag_moveproof_hol_condition} takes, with a~subdivision, the following form
\begin{equation} \label{diag_emove_proof}
	\begin{tikzcd}
		\preblock \arrow[rrr, "\holidem"] \arrow[dr, "\holidem_\bcat"] \arrow[dd, "\Pmovemap"] & & & \preblock \arrow[dd, "\Pmovemap"] \arrow[dl, "\holidem_\bcat"] \\
		& \preblock \arrow[r, "\holidem_\mathrm{Rest}"] & \preblock & \\
		\postmove{\preblock} \arrow[rrr, "\postmove{\holidem}"] \arrow[ur, "c", dashed] & & & \postmove{\preblock} \arrow[ul, dashed, "c"].
	\end{tikzcd}
\end{equation}
We have to find an injective map $c$ such that all smaller cells in \eqref{diag_emove_proof} which involve $c$ commute.
This then implies commutativity of the outer square.
As a map between direct sums
\begin{align*}
		\postmove{\preblock}={}& \bigoplus_{\si{k} \in \mcat\reldel\ncat}\bigl\langle\opcat{\si{k}} \deligne \cdots,\postmove{U}(x)\bigr\rangle \otimes \homset{\si{k} \deligne \cdots}{\postmove{U}(y)}\\
		& \xrightarrow{c} \bigoplus_{\si{q}\in \mcat}\bigoplus_{\si{r}\in\ncat}\homset{\opcat{\si{r}}\deligne\opcat{\si{q}}\deligne \cdots}{U(x)} \otimes \homset{\si{q}\deligne \si{r}\deligne \cdots}{U(y)} = \preblock,
\end{align*}
$c$ is determined by its components $\ctc{c}{\si{k}}{\si{qr}}$, which are
\begin{align}
		\ctc{c}{\si{k}}{\si{qr}}={}& \pdim{\si{q}}\pdim{\si{r}}  	\homset{\basisel{\sweed{\si{k}}{\ncat}}{\si{r}}\deligne \basisel{\sweed{\si{k}}{\mcat}}{\si{q}} \deligne \cdots}{U(x)}\nonumber\\
& \otimes \homset{\basisel{\si{q}}{\sweed{\si{k}}{\mcat}} \deligne \basisel{\si{r}}{\sweed{\si{k}}{\ncat}} \deligne \cdots}{U(y)} \circ U.\label{eq_moveEF_pdef_second}
\end{align}
Here we used the form of Sweedler notation for the forgetful functor from \eqref{eq_reldel_forget_sweed}, writing
\[
U(k) = \sweed{k}{\mcat} \deligne \sweed{k}{\ncat}.
\]

To see that $c \colon \postmove{\preblock} \to \preblock$ is injective, note that under the isomorphisms
\[
\postmove\preblock \cong \homset{\sweed{x}{\mcat \reldel \ncat}}{\sweed{y}{\mcat\reldel\ncat}}\allowbreak \otimes \cdots
,\qquad
\preblock \cong \homset{\sweed{x}{\mcat}\deligne \sweed{x}{\ncat}}{\sweed{y}{\mcat}\deligne \sweed{y}{\ncat}} \otimes \cdots,
\]
that we encountered in Remark~\ref{rem_preblock_shortform},
the map $c$ corresponds to the subspace inclusion, meaning that the following diagram commutes
\begin{equation*}
	\begin{tikzcd}
		\postmove{\preblock} \arrow[d, "c"] \arrow[r, "\cong"] & \homset{\sweed{x}{\mcat \reldel \ncat}}{\sweed{y}{\mcat\reldel\ncat}} \otimes \cdots \arrow[d, hook, "\text{subspace inclusion}"{right}] \\
		\preblock \arrow[r, "\cong"] & \homset{\sweed{x}{\mcat}\deligne \sweed{x}{\ncat}}{\sweed{y}{\mcat}\deligne \sweed{y}{\ncat}} \otimes \cdots.
	\end{tikzcd}
\end{equation*}
This can be checked explicitly, using the isomorphism $\soc$ from Section~\ref{sec_genyon}.
Thus $c$ is monic, and the outer square in the diagram \eqref{diag_emove_proof} commutes if all the cells commute.

To verify that the bottom square in \eqref{diag_emove_proof} commutes, recall that the holonomy idempotents~$\postmove{\holidem}$ and $\holidem_\mathrm{Rest}$ are each defined as a composition of individual holonomy idempotents for every domain except $\bcat$ -- these domains appear in both $\sigsurf$ and $\postmove{\sigsurf}$.
On the other hand, the linear map $c$ affects only the algebraic data associated with the domain $\bcat$.
It thus commutes with the holonomy idempotents for the other domains.

This leaves us with the checking the commutativity of the triangles on the left and the right of \eqref{diag_emove_proof}, which happen to be equal.
They represent the equation
$c \circ \Pmovemap = \holidem_\bcat$,
which needs to be checked now.
Knowing the components of $c$ from \eqref{eq_moveEF_pdef_second} and of $\Pmovemap$ from \eqref{eq_EF_Pmovemap}, we write out the composition $c\circ \Pmovemap$ in components $\si{m},\si{q} \in \mcat$,  $\si{n}, \si{r} \in \ncat$
\begin{align*}
		\ctc{(c\circ \Pmovemap)}{\si{mn}}{\si{qr}} = \sum_{\si{k} \si{a}\si{b}} \pdim{\si{k}} \pdim{\si{q}}\pdim{\si{r}} \pdim{\si{b}}  \pica{1}{pic_ef_rsq_1} \otimes \pica{1}{pic_ef_rsq_2}.
\end{align*}
In order to compare this expression to the respective components of $\holidem_\bcat$, it is most convenient to evaluate it on morphisms $g \colon U(x) \to \si{m} \deligne \si{n}$ and $f \colon \si{m} \deligne\si{n} \to U(y)$, and then pass through the isomorphism
\begin{equation}
	\soc^{-1} \colon\ \bigoplus_{\si{q}\si{r}} \homset{U(x)}{\si{q}\deligne\si{r}} \otimes \homset{\si{q}\deligne\si{r}}{U(y)} \to \homset{U(x)}{U(y)}
\end{equation}
from \eqref{eq_sil}.
Doing so, we obtain
\begin{align}
\soc^{-1}( \ct{(c\circ \Pmovemap)}{\si{mn}}(g\otimes f) )
		&= \hspace{-2mm}\sum_{\si{k} \si{q}\si{r} \si{a}\si{b}} \pdim{\si{k}} \pdim{\si{q}}\pdim{\si{r}} \pdim{\si{b}} \hspace{-5mm} \pica{1}{pic_ef_rsq_3}
		\eqwithref{eq_res_of_id} \sum_{\si{k} \si{a}\si{b}} \pdim{\si{k}} \pdim{\si{b}} \hspace{-5mm} \pica{1}{pic_ef_rsq_4}\nonumber\\
		&\eqwithref{eq_res_of_id} \sum_{\si{a}} \pdim{\si{a}} \pica{1}{pic_ef_rsq_5}.\label{eq_EF_rsq_calc_1}
\end{align}
On the other hand, an unraveling of the definition of $\holidem_\bcat$ similar to Figure~\ref{calc_lasso_longcalc} reveals the following:
\begin{align*}
		\ctc{(\holidem_\bcat)}{\si{mn}}{\si{qr}} &= \frac{1}{\catdim{\bcat}} \sum_{\si{i}\si{j} \si{a}} \pdim{\si{a}} \pdim{\si{i}}\pdim{\si{j}} \pica{1}{pic_ef_rsq_6} \otimes \pica{1}{pic_ef_rsq_7}\\
&
		\eqwithref{eq_parallel_res} \frac{1}{\catdim{\bcat}} \sum_{\si{a}} \pdim{\si{a}} \pica{1}{pic_ef_rsq_8} \otimes \pica{1}{pic_ef_rsq_9}.
\end{align*}
Again, evaluating on $g\otimes f$ and applying $\soc^{-1}$ yields
\begin{align*}
		&\soc^{-1}( \ct{(\holidem_\bcat)}{\si{mn}}(g\otimes f) )
		= \frac{1}{\catdim{\bcat}} \sum_{\si{q}\si{r} \si{a}} \pdim{\si{a}} \pdim{\si{q}}\pdim{\si{r}} \hspace{-5mm} \pica{1}{pic_ef_rsq_10} \eqwithref{eq_res_of_id} \frac{1}{\catdim{\bcat}} \sum_{\si{a}}\pdim{\si{a}} \pica{1}{pic_ef_rsq_5},
\end{align*}
which is identical to \eqref{eq_EF_rsq_calc_1}.
This implies $c\circ \Pmovemap = \holidem_\bcat$, and so the outer paths in the diagram~\eqref{diag_emove_proof} commute, and the move $\moveEF$ is an invariance.	

The linear map $\Movemap$ between block spaces for the $\moveEF$-move also appears in \cite[Theorem~4.43]{fss-statesum}, where it is proved that $\Movemap$ is an isomorphism.

\subsection{Composite moves} \label{sec_compmoves}
This section is the second and final part of the proof of Theorem~\ref{thm_moves}.
The remaining moves do not have to be proved by explicitly using the definition of the evaluation procedure, but can be obtained as compositions of moves we already know to be invariances.

\subsubsection[DV -- silent node]{${\moveDV}$ -- silent node} \label{move_DV_pf}
We begin by redrawing Figure~\ref{pic_moveDV} in a slightly larger region, this time involving another node labeled by $x$.
Such a node $x$ exists because the extruded graph on the left-hand side of Figure~\ref{pic_moveDV} is not allowed to feature a loop without a node (see Figure~\ref{pic_moveDV_pf}).

\begin{figure}[ht]\centering
\raisebox{5mm}{\includegraphics{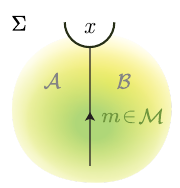}} \ \raisebox{21mm}{$\xrightarrow{\moveDV}$} \includegraphics{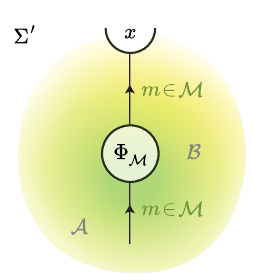}
\caption{}\label{pic_moveDV_pf}
\end{figure}

The invariance under the $\moveDV$-move is a consequence of the following more general lemma.
\begin{Lemma}\label{lem_move_section_invariance}
	Let $\mathbf{M}_1$ be a move between regions $\sigsurf$ and $\postmove{\sigsurf}$, and let $\mathbf{M}_2$ be a move between $\postmove{\sigsurf}$ and $\postmove{\postmove{\sigsurf}}$.
	Suppose that both $\mathbf{M}_2$, and the composite move $\mathbf{M}_1 \mathbf{M}_2$ between $\sigsurf$ and $\postmove{\postmove{\sigsurf}}$ are moves of invariance.
	If the linear map $\Movemap_2$ between block spaces of $\postmove{\sigsurf}$ and $\sigsurf''$ is injective, then $\mathbf{M}_1$ is an invariance as well.
\end{Lemma}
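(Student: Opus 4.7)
The plan is to show the invariance of $\mathbf{M}_1$ by stacking the invariance squares of $\mathbf{M}_2$ and of the composite $\mathbf{M}_1\mathbf{M}_2$ on top of each other and then cancelling $\Movemap_2$ using its injectivity. More precisely, fix any extruded graph $\sigsurf_T$ containing the region $\sigsurf$, and let $\postmove{\sigsurf_T}$ and $\postmove{\postmove{\sigsurf_T}}$ denote the results of performing $\mathbf{M}_1$, resp.\ $\mathbf{M}_1$ followed by $\mathbf{M}_2$, on $\sigsurf_T$. Denote by $\movemap_i$ the linear map on node spaces induced by $\mathbf{M}_i$, and by $\Movemap_i$ the corresponding map on block spaces. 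By construction, the composite move $\mathbf{M}_1\mathbf{M}_2$ comes with the linear maps $\movemap_2 \circ \movemap_1$ and $\Movemap_2 \circ \Movemap_1$.

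The key step is the following diagram chase. The invariance of $\mathbf{M}_1\mathbf{M}_2$ gives the identity
\begin{equation*}
\Movemap_2 \circ \Movemap_1 \circ \evaluate{\sigsurf_T} = \evaluate{\postmove{\postmove{\sigsurf_T}}} \circ \movemap_2 \circ \movemap_1,
\end{equation*}
while the invariance of $\mathbf{M}_2$, applied to the graph $\postmove{\sigsurf_T}$, gives
\begin{equation*}
\Movemap_2 \circ \evaluate{\postmove{\sigsurf_T}} = \evaluate{\postmove{\postmove{\sigsurf_T}}} \circ \movemap_2.
\end{equation*}
Composing the second equation on the right with $\movemap_1$ and comparing with the first yields
\begin{equation*}
\Movemap_2 \circ \Movemap_1 \circ \evaluate{\sigsurf_T} = \Movemap_2 \circ \evaluate{\postmove{\sigsurf_T}} \circ \movemap_1.
\end{equation*}
Since $\Movemap_2$ is assumed injective, we may cancel it from the left to obtain $\Movemap_1 \circ \evaluate{\sigsurf_T} = \evaluate{\postmove{\sigsurf_T}} \circ \movemap_1$, which is exactly the commutativity of the diagram \eqref{diag_movediag} required by Definition~\ref{def_invariance} for $\mathbf{M}_1$.

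There is essentially no hard step; what needs to be checked is only bookkeeping, namely that the linear maps $\movemap$ and $\Movemap$ associated with the composite $\mathbf{M}_1\mathbf{M}_2$ are indeed $\movemap_2 \circ \movemap_1$ and $\Movemap_2 \circ \Movemap_1$, which is immediate from the definitions in \eqref{eq_Pmovemap} and \eqref{eq_Movemap}. To deduce the invariance of $\moveDV$ from this lemma, one then chooses $\mathbf{M}_1 = \moveDV$ and $\mathbf{M}_2 = \moveC$ (contracting the newly inserted silent node back into an adjacent node), since $\moveC$ is already established as an invariance and the composite is the identity move (up to relabelling), which is trivially invariant; the injectivity of $\Movemap_{\moveC}$ was verified in Appendix~\ref{move_C_pf} (cf.\ \cite[Lemma~4.39]{fss-statesum}).
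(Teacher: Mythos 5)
Your proof is correct and is essentially the paper's own argument: both amount to the same diagram chase, combining the invariance square of $\mathbf{M}_2$ with the outer square for $\mathbf{M}_1\mathbf{M}_2$ and cancelling the monomorphism $\Movemap_2$ to get the square for $\mathbf{M}_1$. The bookkeeping point you note (that the composite move carries $\movemap_2\circ\movemap_1$ and $\Movemap_2\circ\Movemap_1$) is used implicitly in the paper as well, so nothing is missing.
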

\begin{proof}
	In the following diagram, commutativity of the left square is equivalent to $\mathbf{M}_1$ being an invariance.
	This commutativity follows from the commutativity of the right square (due to the fact that $\mathbf{M}_2$ is an invariance), the commutativity of the outer paths ($\mathbf{M}_1 \mathbf{M}_2$ is an invariance), and the fact that $\Movemap_2$ is a monomorphism
	\begin{equation*}
		\begin{tikzcd}
			\nodespace \arrow[r, "\movemap_1"] \arrow[d, "\evaluate{\sigsurf}"] & \postmove{\nodespace} \arrow[r, "\movemap_2"] \arrow[d, "\evaluate{\postmove{\sigsurf}}"] & \postmove{\postmove{\nodespace}} \arrow[d, "\evaluate{\postmove{\postmove{\sigsurf}}}"] \\
			\blockspace \arrow[r, "\Movemap_1"] & \postmove{\blockspace} \arrow[r, "\Movemap_2"] & \postmove{\postmove{\blockspace}}.
		\end{tikzcd}\tag*{\qed}
	\end{equation*} \renewcommand{\qed}{}
\end{proof}

To apply Lemma~\ref{lem_move_section_invariance} to the present situation by setting $\mathbf{M}_1 = \moveDV$ and $\mathbf{M}_2 = \moveC$,
we now show that the composite $\moveDV\moveC$ is the identity move, which is clearly an invariance.
This is obvious on the topological level.
As for the node labels, it follows immediately from the definition \eqref{eq_def_gencomp_U} of the contraction operation $\mitosis{\mcat}$ that $\zewid{\mcat}$ is a unit for the operation
$\zewid{\mcat} \mitosis{\mcat} x \cong x$.
Lastly we need to verify that the linear maps $\movemap_{\moveDV}$, $\movemap_{\moveC}$ between node spaces compose to the identity.
This is done by checking that for $f\in \nodespace = \homset{m}{\sweed{x}{\mcat}}$,
$\movemap_{\moveC}(\soc(\id_\mcat) \gencomp{m} f) = f$,
which follows from unraveling the definitions \eqref{eq_gencomp_def} of $\gencomp{m}$ and \eqref{eq_sil} of $\soc$.
This proves that $\moveDV$ is an invariance.

Moreover, the $\moveDV$-move's linear map $\Movemap$ between block spaces is an isomorphism, as it is the inverse of the $\moveC$-move's map $\Movemap_{\moveC}$.

\subsubsection[L -- loop move]{${\moveL}$ -- loop move} \label{move_L_pf}
The $\moveL$-move is obtained as the composition of moves, see Figure~\ref{pic_lmove_proof}.
First, we apply the $\moveDV$-move to the loop, which splits it into two $\mcat$-labeled defect lines and creates a new silent node.
Next, we flip the orientation of one of the $\mcat$-labeled defect lines using the $\moveOR$-move, so that both point away from the $x$-labeled node.
The $\moveEF$-move then merges the two defect lines into one, labeled by \smash{$\opcat{m} \lexreldel m \in \opcat{\mcat} \reldelov{\acat} \mcat$}.
In the last step, we use the (one-sided version of the) $\moveC$-move to absorb the silent node into the node labeled by $x$.
This results in the node label \smash{$x' ={} \zewcoid{\mcat} \mitosis{\opcat{\mcat}\reldel \mcat} x$}, as described in \eqref{eq_lmove_postmove_raylabel}.

\begin{figure}[ht]\centering
\includegraphics{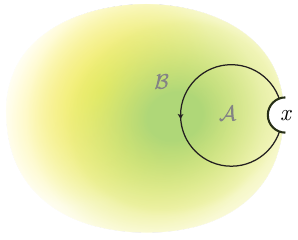} \ \raisebox{21mm}{$\xrightarrow{\moveDV}$} \includegraphics{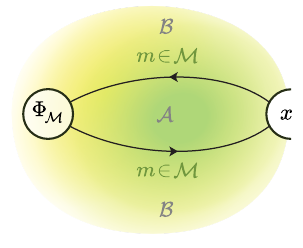}

\raisebox{21mm}{$\xrightarrow{\moveOR, \moveEF}$}\includegraphics{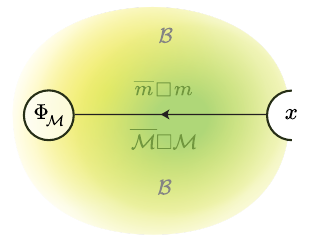} \ \raisebox{21mm}{$\xrightarrow{\moveC}$} \includegraphics{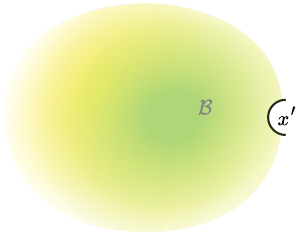}
\caption{}\label{pic_lmove_proof}
\end{figure}

We still have to compute the linear map $\movemap$ between node spaces, which is a composition of the individual maps for each move in the chain (see Figure~\ref{pic_lmove_proof}).
The first morphism is $\movemap_{\moveDV} = \soc(\id_m) \otimes \id_{\homset{\opcat{m}\deligne m}{\sweed{x}{\opcat{\mcat}\deligne\mcat}}}$.
The map $\movemap_{\moveOR}$ is the identity.
In the next step, we obtain
\begin{equation*}
	\movemap_{\moveEF} \circ \movemap_{\moveDV} = \frac{1}{\catdim{\acat}} \bigoplus_{\si{a},\si{b} \in \acat} \pdim{\si{b}} \pica{1}{pic_l_moma_2} \otimes \pica{1}{pic_l_moma_1}.
\end{equation*}
Lastly, when performing the $\moveC$-move, the two tensor factors are composed
\begin{equation*}
	\movemap_{\moveL} = \movemap_{\moveC} \circ \movemap_{\moveEF} \circ \movemap_{\moveDV} = \frac{1}{\catdim{\acat}} \sum_\si{a} \pdim{\si{a}} \pica{1}{pic_l_moma_3} = r(\soc(\id_m) \circ -),
\end{equation*}
where $r$ denotes the retraction from Lemma~\ref{lemma_homspace_projection}.
This agrees with the linear map defined in Section~\ref{sec_moves_overview}, and so the proof is done.

This completes the proof of Theorem~\ref{thm_moves}.

\section{Details on loop graphs}\label{app_loopgraphs}
This section forms the proof of Theorem~\ref{thm_elementary_graph}.
	As in Lemma~\ref{lem_elemgraph_generalformula}, we restrict ourselves without loss of generality to the case $\bcat = \vect$.
	\begin{enumerate}\itemsep=0pt
		\item[(1)] Let $g \in \preblock = \homset{\ewid{\mcat}}{\ewid{\mcat}}$ be a vector in the pre-block space.
		We make the formula
		\begin{equation*}
			\holsurj(g) = \sum_\si{a} \frac{\pdim{\si{a}}}{\catdim{\acat}} \brev{\zewid{\mcat}}{\si{a}^*} \circ (\si{a}^* g \si{a}) \circ \cobrev{\zewid{\mcat}}{\si{a}}
		\end{equation*}
		obtained in Lemma~\ref{lem_elemgraph_generalformula} for the retraction $\holsurj \colon \preblock \to \blockspace$ from \eqref{eq_def_holproj} more explicit.
		From~\eqref{eq_ewid_balancings_components} and \eqref{eq_def_brev}, we can work out how to express the morphisms $\brev{\zewid{\mcat}}{\si{a}^*}$ and $\cobrev{\zewid{\mcat}}{\si{a}}$ in components
		\begin{align*}
			&\ete{(\brev{\zewid{\mcat}}{\si{a}^*})}{\si{l}}{\si{q}}=\pdim{\si{l}}  \basisel{\si{a}^*\si{l}}{\si{q}} \btimes \basisel{\si{q}}{\si{a}^*\si{l}} \colon\ \si{a}^*\si{l} \deligne \opcat{\si{a}^*\si{l}} \to \si{q}\deligne \opcat{\si{q}}, \\
			&\ete{(\cobrev{\zewid{\mcat}}{\si{a}})}{\si{n}}{\si{k}}= \pdim{\si{n}}  \basisel{\si{n}}{\si{a}^*\si{k}} \btimes \basisel{\si{a}^*\si{k}}{\si{n}} \colon\ \si{n} \deligne\opcat{\si{n}} \to \si{a}^*\si{k} \deligne\opcat{\si{a}^*\si{k}}.
		\end{align*}
		This allows us to state the components $\ete{\holsurj(g)}{\si{n}}{\si{q}}\colon \si{n} \deligne \opcat{\si{n}} \to \si{q} \deligne \opcat{\si{q}}$ of $\holsurj(g)$
		\begin{align}
				\ete{\holsurj(g)}{\si{n}}{\si{q}} = \sum_{\si{a},\si{k}, \si{l}} \frac{\pdim{\si{a}}\pdim{\si{n}}\pdim{\si{l}}}{\catdim{\acat}} (\basisel{\si{a}^*\si{l}}{\si{q}} \btimes \basisel{\si{q}}{\si{a}^*\si{l}}) \circ (\si{a}^* \ete{g}{\si{k}}{\si{l}} \si{a}) \circ (\basisel{\si{n}}{\si{a}^*\si{k}} \btimes \basisel{\si{a}^*\si{k}}{\si{n}}).\label{eq_pi_component}
		\end{align}
		
		\item[(2)] For the evaluation of the loop graph $O$, we also need to compute the morphism $\delta$ from~\eqref{eq_eval_delta}.
		Let $f \colon m \to m$ be an arbitrary endomorphism of $m \in \mcat$, as in Theorem~\ref{thm_elementary_graph}.
		We compute
		\begin{align}
				\delta(\soc(f)) &= \bigoplus_\si{k} \pdim{\si{k}}  \soc(f) \circ (\basisel{\si{k}}{m} \deligne \basisel{m}{\si{k}}).\label{eq_thmelempf_calc_1}
		\end{align}
		Ultimately though, we need the components $\ete{\delta(\soc(f))}{\si{k}}{\si{l}}$.
		To obtain them, we make use of the explicit form of the isomorphism $\soc$ from \eqref{eq_sil}, $\te{\soc(f)}{\si{l}} = (\basisel{m}{\si{l}} \circ f) \deligne \basisel{\si{l}}{m}$.
		We insert this into \eqref{eq_thmelempf_calc_1} to find
		\begin{align*}
				\ete{\delta(\soc(f))}{\si{k}}{\si{l}}
				&= \pdim{\si{k}}  \te{\soc(f)}{\si{l}} \circ (\basisel{\si{k}}{m} \deligne \basisel{m}{\si{k}}) = \pdim{\si{k}}  ((\basisel{m}{\si{l}} \circ f) \deligne \basisel{\si{l}}{m}) \circ (\basisel{\si{k}}{m} \deligne \basisel{m}{\si{k}}) \\
				&= \pdim{\si{k}}  (\basisel{m}{\si{l}} \circ f \circ\basisel{\si{k}}{m}) \deligne ( \basisel{m}{\si{k}}\circ \basisel{\si{l}}{m}).
		\end{align*}
		
		\item[(3)] We substitute $\delta(\soc(f))$ for $g$ in \eqref{eq_pi_component}
		\begin{align}
				\ete{\holsurj(\delta(\soc(f)))}{\si{n}}{\si{q}}={}& \sum_{\si{a},\si{k}, \si{l}} \frac{\pdim{\si{a}}\pdim{\si{n}}\pdim{\si{l}}}{\catdim{\acat}} (\basisel{\si{a}^*\si{l}}{\si{q}} \btimes \basisel{\si{q}}{\si{a}^*\si{l}}) \circ (\si{a}^* \ete{\delta(\soc(f))}{\si{k}}{\si{l}} \si{a}) \nonumber \\
&\circ (\basisel{\si{n}}{\si{a}^*\si{k}} \btimes \basisel{\si{a}^*\si{k}}{\si{n}})\nonumber \\
={}& \sum_{\si{a},\si{k}, \si{l}} \frac{\pdim{\si{a}}\pdim{\si{n}}\pdim{\si{l}} \pdim{\si{k}}}{\catdim{\acat}} (\basisel{\si{a}^*\si{l}}{\si{q}} \circ \si{a}^* (\basisel{m}{\si{l}} \circ f \circ\basisel{\si{k}}{m}) \circ \basisel{\si{n}}{\si{a}^*\si{k}}) \nonumber\\
& \btimes (\basisel{\si{a}^*\si{k}}{\si{n}} \circ (\basisel{m}{\si{k}}\circ \basisel{\si{l}}{m}) \si{a} \circ \basisel{\si{q}}{\si{a}^*\si{l}} ).\label{eq_thmelempf_calc_2}
		\end{align}
		To simplify this expression, we use that
		\begin{align}
				&\sum_{\si{l}} \pdim{\si{l}} (\basisel{\si{a}^*\si{l}}{\si{q}} \circ \si{a}^* \basisel{m}{\si{l}}) \otimes (\basisel{\si{l}}{m} \si{a} \circ \basisel{\si{q}}{\si{a}^*\si{l}} ) = \basisel{\si{a}^*m}{\si{q}} \otimes \basisel{\si{q}}{\si{a}^*m}, \qquad \text{and}\nonumber \\
				&\sum_{\si{k}} \pdim{\si{k}} (\si{a}^* \basisel{\si{k}}{m} \circ \basisel{\si{n}}{\si{a}^* \si{k}} ) \otimes ( \basisel{\si{a}^*\si{k}}{\si{n}} \circ \basisel{m}{\si{k}}\si{a} ) = \basisel{\si{n}}{\si{a}^* m} \otimes \basisel{\si{a}^* m}{\si{n}}.\label{eq_thmelempf_calc_3}
		\end{align}
		Using \eqref{eq_thmelempf_calc_3} to continue the calculation \eqref{eq_thmelempf_calc_2}, we obtain
		\begin{align}\label{eq_thmelempf_calc_4}
				\ete{\holsurj(\delta(\soc(f)))}{\si{n}}{\si{q}}
				= \sum_{\si{a}} \frac{\pdim{\si{a}}\pdim{\si{n}}}{\catdim{\acat}} (\basisel{\si{a}^*m}{\si{q}} \circ \si{a}^*f \circ \basisel{\si{n}}{\si{a}^* m} ) \btimes ( \basisel{\si{a}^* m}{\si{n}} \circ \basisel{\si{q}}{\si{a}^*m} ).
		\end{align}
		
		\item[(4)] The cells of the following diagram commute
		\begin{equation}\label{eq_thmelempf_calc_5}
			\begin{tikzcd}
				\homsetin{\raycat}{\zewid{\mcat}}{\zewid{\mcat}} \arrow[r, hook] \arrow[d, "\trace{\raycat}"] & \homsetin{\nodecat}{\ewid{\mcat}}{\ewid{\mcat}} = \bigoplus_\si{n} \homset{\si{n} \deligne \opcat{\si{n}}}{\ewid{\mcat}} \arrow[r, "\bigoplus_\si{n} \soc^{-1}"] \arrow[d, "\trace{\nodecat}"] & \bigoplus_\si{n} \homsetin{\mcat}{\si{n}}{\si{n}} \arrow[d, "\sum_\si{n} \pdim{\si{n}} \trace{\mcat}"] \\
				\field \arrow[r, "\catdim{\acat}"] & \field \arrow[r, "\id"] & \field.
			\end{tikzcd}
		\end{equation}
		Commutativity of the left cell follows from Lemma~\ref{lem_center_calabiyau}; the right square can be checked explicitly using the explicit form of the isomorphism $\soc$ form \eqref{eq_sil}.
		
		\item[(5)] We compute $\bigoplus_\si{n} \soc^{-1}(\et{\holsurj(\delta(\soc(f)))}{\si{n}})$, starting from \eqref{eq_thmelempf_calc_4}
		\begin{align*}
				\soc^{-1}(\et{\holsurj(\delta(\soc(f)))}{\si{n}})
				&= \sum_{\si{a}\si{q}} \frac{\pdim{\si{a}}\pdim{\si{n}}}{\catdim{\acat}} (\basisel{\si{a}^* m}{\si{n}} \circ \basisel{\si{q}}{\si{a}^*m} \circ \basisel{\si{a}^*m}{\si{q}} \circ \si{a}^*f \circ \basisel{\si{n}}{\si{a}^* m} )\\
				&= \sum_{\si{a}} \frac{\pdim{\si{a}}}{\catdim{\acat}} (\basisel{\si{a}^* m}{\si{n}} \circ \si{a}^*f \circ \basisel{\si{n}}{\si{a}^* m} ).
		\end{align*}
		To arrive at the first equality, we used the explicit form of the isomorphism $\soc$.
		For the second equality, we recognized a resolution of the identity. To perform this step, it is necessary to replace the dimension factor $\pdim{\si{n}}$ with $\pdim{\si{q}}$, which is possible thanks to Schur's lemma.
		
		\item[(6)] We are now in a position to compute the evaluation
		\begin{align}
				(\theta, \evaluate{O}\soc(f))
				&\eqwithref{eq_total_eval} \theta \circ \pi \circ \delta \circ \soc(f) \eqwithref{eq_dualoftheta} \catdim{\acat}  \trace{\raycat}(\holsurj \circ \delta \circ \soc(f) ) \nonumber\\
				&\eqwithref{eq_thmelempf_calc_5} \sum_\si{n} \pdim{\si{n}} \trace{\mcat} \bigl(\soc^{-1}(\et{\holsurj (\delta (\soc(f)))}{\si{n}})\bigr) \nonumber\\
				&= \frac{1}{\catdim{\acat}} \sum_{\si{n}, \si{a}} \pdim{\si{n}} \pdim{\si{a}} \trace{\mcat} (\basisel{\si{a}^* m}{\si{n}} \circ \si{a}^*f \circ \basisel{\si{n}}{\si{a}^* m}) \nonumber\\
				&\eqdesc{(*)} \frac{1}{\catdim{\acat}} \sum_{\si{a}} \pdim{\si{a}} \trace{\mcat} ( \si{a}^*f ) = \trace{\mcat} ( f ).\label{eq_thmelempf_calc_7}
		\end{align}
		The equality $(*)$ uses the fact that for any endomorphism $\alpha \colon x \to x$ in a Calabi--Yau category $\xcat$, $\trace{\xcat}(\alpha) = \sum_\si{y} \pdim{\si{y}} \trace{\xcat}(\basisel{x}{\si{y}} \circ \alpha \circ \basisel{\si{y}}{x})$.
		To check this, one uses symmetry of the trace and a resolution of the identity.
		
		It is easy to see that the result \eqref{eq_thmelempf_calc_7} is equivalent to the statement
$\theta \circ \evaluate{O} = \trace{\mcat} \circ \soc^{-1}
$
		of Theorem~\ref{thm_elementary_graph}.
		The claim also holds true in the case where $\bcat$ is not necessarily $\vect$. Thus, the proof is done.
		\end{enumerate}

\subsection*{Acknowledgements}
We are grateful to Catherine Meusburger and Alexis Virelizier for detailed
discussions and to Alexis Virelizier for insisting that extruded graphs can
be dealt with as two-dimensional objects.
We thank David Jaklitsch for helpful discussions and Katrin Suchowski for creating the pictures.
The authors are supported by the Deutsche Forschungsgemeinschaft (DFG, German Research Foundation)
under SCHW1162/6-1; CS is partially supported under Germany's Excellence Strategy - EXC 2121
``Quantum Universe'' - 390833306.
Finally, we thank the anonymous referees for their thorough reviews of the submission which helped to improve the exposition
of the results.

\pdfbookmark[1]{References}{ref}
\LastPageEnding

\end{document}